%
%
%
\documentclass[10pt]{amsart}
\usepackage{amsthm,amsfonts,amsmath,amssymb,latexsym}
\usepackage{epsfig,graphics,color}
\usepackage[matrix,arrow,curve]{xy}
\usepackage{hyperref}
\usepackage{url}
\usepackage{mathrsfs}
\setlength{\parskip}{5pt}

%
%
\newtheorem{PARA}{}[section]
\newtheorem{theorem}[PARA]{Theorem}

\newtheorem{lemma}[PARA]{Lemma}
\newtheorem{proposition}[PARA]{Proposition}
\newtheorem{definition}[PARA]{Definition}

\theoremstyle{definition}
\newtheorem{remark}[PARA]{Remark}
\newtheorem{example}[PARA]{Example}
\numberwithin{equation}{section}

\newcommand{\para}{\begin{PARA}\rm}
\newcommand{\arap}{\end{PARA}\rm}
\newcommand{\dfn}{\begin{definition}\rm}
\newcommand{\nfd}{\end{definition}\rm}
\newcommand{\rmk}{\begin{remark}\rm}
\newcommand{\kmr}{\end{remark}\rm}
\newcommand{\xmpl}{\begin{example}\rm}
\newcommand{\lpmx}{\end{example}\rm}
\newcommand{\cA}{\mathcal{A}}

\newcommand{\cC}{\mathcal{C}}

\newcommand{\cF}{\mathcal{F}}

\newcommand{\cH}{\mathcal{H}}

\newcommand{\cJ}{\mathcal{J}}

\newcommand{\cL}{\mathcal{L}}
\newcommand{\cM}{\mathcal{M}}

\newcommand{\cN}{\mathcal{N}}

\newcommand{\cP}{\mathcal{P}}

\newcommand{\cU}{\mathcal{U}}

\newcommand{\cW}{\mathcal{W}}

\newcommand{\sH}{\mathscr{H}}

\newcommand{\og}{{\overline{\gamma}}}
\newcommand{\ug}{{\underline{\gamma}}}
\newcommand{\oev}{\overline{\mathrm{ev}}}
\newcommand{\uev}{\underline{\mathrm{ev}}}

\newcommand{\uL}{{\underline{L}}}

\newcommand{\omu}{{\overline{\mu}}}

\newcommand{\op}{{\overline{p}}}
\newcommand{\up}{{\underline{p}}}

\newcommand{\utau}{\underline{\tau}}
\newcommand{\oz}{{\overline{z}}}
\newcommand{\uz}{{\underline{z}}}

\newcommand{\C}{{\mathbb{C}}}

\newcommand{\N}{{\mathbb{N}}}
\renewcommand{\P}{{\mathbb{P}}}
\newcommand{\Q}{{\mathbb{Q}}}
\newcommand{\R}{{\mathbb{R}}}

\newcommand{\T}{{\mathbb{T}}}
\renewcommand{\u}{{\mathbf{u}}}

\newcommand{\Z}{{\mathbb{Z}}}
%

\newcommand{\ind}{\mathrm{ind}}

\newcommand{\ev}{\mathrm{ev}}

\newcommand{\reg}{{\mathrm{reg}}}
\newcommand{\eps}{{\varepsilon}}
\newcommand{\om}{{\omega}}

\newcommand{\tf}{{\widetilde{f}}}

\newcommand{\tvarphi}{{\widetilde{\varphi}}}
\newcommand{\tPhi}{{\widetilde{\Phi}}}

\newcommand{\tH}{{\widetilde{H}}}

\newcommand{\tgamma}{{\widetilde{\gamma}}}

\newcommand{\tPi}{{\widetilde{\Pi}}}

\def\NABLA#1{{\mathop{\nabla\kern-.5ex\lower1ex\hbox{$#1$}}}}
\def\Nabla#1{\nabla\kern-.5ex{}_{#1}}
\def\Tabla#1{\Tilde\nabla\kern-.5ex{}_{#1}}
\renewcommand{\Tilde}{\widetilde}

\newcommand{\p}{{\partial}}

\begin{document}

\title[Symplectic- and linearized contact homology]{$S^1$-equivariant symplectic homology and linearized contact homology}
\author{Fr\'ed\'eric Bourgeois}
\address{Laboratoire de Math\'ematiques d'Orsay, UMR 8628, Universit\'e Paris-Sud \& CNRS, Orsay, France.}
\email{Frederic.Bourgeois@math.u-psud.fr}
\author{Alexandru Oancea}
\address{Sorbonne Universit\'es, UPMC Univ Paris 06, UMR 7586, Institut de Math\'ematiques de Jussieu-Paris Rive Gauche, Case 247, 4 place Jussieu, F-75005, Paris, France.}
\email{alexandru.oancea@imj-prg.fr}
\date{June 9, 2014}


\begin{abstract}
We present three equivalent definitions of $S^1$-equivariant symplectic homology. 
We show that, using rational coefficients, the positive part of $S^1$-equivariant symplectic homology is isomorphic to linearized contact homology, when the latter is defined. We present several computations and applications, and introduce a rigorously defined substitute for cylindrical/linearized contact homology based on an $S^1$-equivariant construction. 
\end{abstract}

\maketitle

\tableofcontents


\section{Introduction}

This paper is concerned with compact symplectic manifolds $(W,\omega)$ with contact type boundary $(M,\xi)$, meaning that $\omega$ is exact in a neighborhood of the boundary and admits a primitive that restricts on $M$ to a contact form for $\xi$ which induces the boundary orientation. We assume throughout the paper that $W$ is symplectically atoroidal, meaning that $\int f^*\omega =0$ for any smooth map $f:T^2\to W$. An important class of examples is that of Liouville domains~\cite{Eliashberg-Gromov,Seidel07,Cieliebak-Eliashberg-book}, for which the primitive of $\omega$ is globally defined. This contains the important subclass of Weinstein domains. To improve readability we  confine the main exposition to Liouville domains, and indicate along the way what changes are necessary in the general case. 

Denote $S^1=\R/\Z$. The goal of the present paper is to explain the relationship between the following symplectic invariants that are associated to $(W,\omega)$: symplectic homology $SH_*(W)$ and its versions $SH_*^\pm(W)$~\cite{Viterbo99}, linearized contact homology $CH_*^{lin}(M)$~\cite{EGH,BOcont,Bourgeois-Ekholm-Eliashberg-1}, $S^1$-equivariant symplectic homology $SH_*^{S^1}(W)$ and its versions $SH_*^{\pm,S^1}(W)$~\cite{Viterbo99,Seidel07}. The symplectic homology groups $SH_*$ and $SH_*^{S^1}$ are invariants of the symplectic completion $(\hat W,\hat\omega) = (W,\omega) \cup \left([1,\infty[\times M,d(r\alpha)\right)$ (see~\cite{Viterbo99,Seidel07} for $SH_*$, the $S^1$-equivariant case being similar), meaning that they do not depend on the choice of Liouville vector field and almost complex structure. In contrast, the linearized contact homology groups $CH_*^{lin}$ depend a priori on the choice of contact form on $M$ and of almost complex structure on $\hat W$. Invariance with respect to these choices is one of the expected applications of the polyfold theory of Hofer, Wysocki and Zehnder~\cite{Hofer-Polyfolds-survey,HWZ-polyfolds-I,HWZ-polyfolds-II,HWZ-polyfolds-III}. 
In this paper we do not use invariance and work for a fixed choice of contact form $\alpha$ and almost complex structure $J$ so that the groups $CH_*^{lin}(M,\alpha,J)$ are well-defined. For readability we use the shorthand notations $SH_*(W)$, $SH_*^{S^1}(W)$, $CH_*^{lin}(M)$. As a matter of fact, our Isomorphism Theorem~\ref{thm:isomorphism} implies invariance of linearized contact homology for the restricted class of Liouville domains that we consider. Moreover, the same theorem provides $SH_*^{+,S^1}(W)$ as an alternative to linearized contact homology which is a well-defined invariant of Liouville domains (see also the discussion at the beginning of~\S\ref{sec:CHlin}).  

We refer to~\cite{BOcont} for detailed definitions of symplectic homology and linearized contact homology, while the $S^1$-equivariant version of symplectic homology will be the main focus of this paper. The original definition is due to Viterbo~\cite[\S5]{Viterbo99} (see also~\cite{BOGysin}) and an alternative one was sketched by Seidel~\cite[\S8b]{Seidel07}. We will give details for each of them in~\S\ref{sec:S1}, prove that they are equivalent, and further refine Seidel's definition. The $S^1$-action that is referred to is the action by reparametrization on the space $\cL\hat W$ of smooth maps $\gamma:S^1\to \hat W$, given by $(\tau \cdot \gamma)(\cdot) := \gamma(\cdot -\tau)$, $\tau \in S^1$.

The above homology groups carry a $\Z$-grading if one uses coefficients in the \emph{Novikov ring\, }Ê$\Lambda_\om$ consisting of formal linear
combinations $\lambda:= \sum_{A\in H_2(W;\Z)} \lambda_A e^A$,
$\lambda_A \in \Z$ such that $\forall c>0$, $\#\{A\, : \, \lambda_A \neq
   0 \mbox{ and\: }Ê\omega(A) \le c\}<\infty$. The $\Z$-grading continues to exist if one uses $\Z$-coefficients provided that $c_1(W)$ is torsion. Again, to improve readability we restrict the main exposition to this case, and indicate along the way what changes are necessary -- if any -- in the general case.
     
Let $2n=\dim_\R W$. Our grading convention for symplectic homology is such that $SH_*^-(W)\simeq H_{*+n}(W,M)$. As an example, we have $SH_*(T^*N)\simeq H_*(\cL N)$ for a closed orientable spin manifold $N$~\cite{Viterbo-cotangent,SW,AS,AS-corrigendum}. Our grading convention for linearized contact homology uses the transverse Conley-Zehnder index, without the $n-3$ shift that is common in symplectic field theory~\cite{EGH}. As an example, we have $CH_*^{lin}(\P^+T^*N)=H_*(\cL N/S^1,N)$~\cite{Cieliebak-Latschev}, where $\P^+T^*N$ is the space of oriented contact elements of a closed oriented spin manifold $N$. In the case of the boundary $(S^{2n-1},\xi)$ of the unit ball in $\C^n$, we have $CH_*^{lin}(S^{2n-1})\simeq \Q$ if $*=n+1+2k$, $k\ge 0$, and $CH_*^{lin}(S^{2n-1})=0$ otherwise.

These homology groups count in various ways closed Reeb orbits on $M$, possibly combined with critical points of a Morse function on $W$. The differential relates loops that have the same free homotopy class in $W$, so that all the above homology groups split as direct sums indexed by $\pi_0\cL W\simeq \pi_1(W)/conj$. 
Most of our statements hold provided one specializes to some particular free homotopy class. 

The various versions of ($S^1$-equivariant) symplectic homology groups fit into \emph{tautological exact triangles} 
\begin{equation} \label{eq:tautologicaltriangle}
\xymatrix
@C=20pt
{
H_{*+n}^{[S^1]}(W,M) \ar[rr] & & 
SH_*^{[S^1]}(W) \ar[dl] \\ & SH_*^{+[,S^1]}(W) \ar[ul]^{[-1]}  
}
\end{equation}
Here the argument $[S^1]$ is optional, and the circle acts on the pair
$(W,M)$ trivially, so that
$H_*^{S^1}(W,M)=H_*(W,M)\otimes H_*(BS^1)$. Heuristically, the \emph{positive symplectic homology groups} $SH_*^{+[,S^1]}(W)$ are given by quotient
complexes generated by closed Reeb orbits on $M$ (these have positive action bounded away from zero), whereas $SH_*^{[S^1]}(W)$ captures the interaction between the Reeb dynamics on $M$ and the differential topology of $W$, represented by critical points of a Morse function (these have action close to zero).

\begin{remark}[comparison of notation with~\cite{BOcont}] \label{rmk:notation} 
We use the following notational convention. We do not specify in our notation the free homotopy class of loops. Through the whole of Section~\ref{sec:S1} the notation $SH_*^{[S^1]}$ and $SH_*^{+,[S^1]}$ stands for the invariant built on all free homotopy classes simultaneously. Thus, what we denote in section~\ref{sec:S1} by $SH_*^+$ coincides with $SH_*^+\oplus \bigoplus_{c\neq 0} SH_*^c$ in the notation of~\cite{BOcont}. 
Through the whole of Section~\ref{sec:CHlin} we need to restrict to a collection $\mathcal{C}$ of free homotopy classes for which the transversality assumptions of~\S\ref{sec:CH-Ham} are satisfied. Thus, what we denote in this paper by $CH_*^{lin}$ coincides with $\bigoplus_{c\in\mathcal{C}} HC_{*+n-3}^{i^{-1}(c)}$ in the notation of~\cite{BOcont}. Implicitly, in all statements relating symplectic homology and linearized contact homology, we understand that we restrict to such a collection $\mathcal{C}$ of free homotopy classes in the notation $SH_*^{[S^1]}$ and $SH_*^{+,[S^1]}$ as well. 
\end{remark}

From the perspective of the above remark, it is perhaps useful to remark that the exact triangle~\eqref{eq:tautologicaltriangle} only carries information related to contractible orbits: the maps having $H_{*+n}^{[S^1]}(W,M)$ as source or target land in, respectively are defined on, the summand of the symplectic homology groups determined by contractible orbits.

In order to better situate the arguments of this paper, let us consider the example of $S^1$-spaces, i.e. topological spaces endowed with a topological $S^1$-action. The $S^1$-equivariant \emph{singular} homology groups are a homological functor $H_*^{S^1}(-)$ defined on the category of $S^1$-spaces. This functor is characterized by the equality $H_*^{S^1}(X)=H_*(X/S^1)$ if the action is free, and the fact that an equivariant homotopy equivalence induces an isomorphism on $H_*^{S^1}$~\cite{Basu}. Indeed, it follows from these two properties that $H_*^{S^1}(X)=H_*(X \times_{S^1} ES^1)$, where $ES^1$ is a contractible space endowed with a free action. Such a space is unique up to equivariant homotopy equivalence, and an explicit model is $ES^1=S^\infty=\lim_{N}S^{2N+1}$. The space $X_{Borel}:=X\times_{S^1} ES^1$ is called \emph{the Borel construction}. In case $X=*$ is a point this is $BS^1:=ES^1/S^1=\C P^\infty$, \emph{the classifying space for principal $S^1$-bundles}. 

The space $X_{Borel}$ plays a role in the following three geometric pictures. 
\begin{itemize}
\item $X_{Borel}$ is the base of the principal $S^1$-bundle $X\times ES^1\stackrel{pr} \to X\times_{S^1} ES^1$.
\item $X_{Borel}$ is the total space of the locally trivial fibration $X_{Borel} \stackrel{pr_2}\to \C P^\infty$ with fiber $X$, induced by the second projection $X\times ES^1\stackrel{pr_2} \to ES^1$. 
\item the first projection $X\times ES^1\stackrel{pr_1} \to X$ induces a map $X_{Borel}\to X/S^1$. This is in general \emph{not} a locally trivial fibration: the fiber at an orbit $[x]\in X/S^1$ is the classifying space $B Stab(x)$ of the isotropy group at $x$. 
\end{itemize}
$$
\xymatrix
@C=15pt
{ S^1 \ar[r] & X\times ES^1 \ar[d]^{pr} & & X\ar[r] & X_{Borel} \ar[d]^{pr_2} && BStab(x)\ar[r] & X_{Borel} \ar[d]^{pr_1}  
\\
& X_{Borel}Ê& & & \C P^\infty & & & [x]\in X/S^1 \qquad
}
$$

Each of the above geometric pictures sheds its own light on $X_{Borel}$. The first point of view gives rise to the Gysin exact triangle in Theorem~\ref{thm:Gysin-spec}(i) and motivates Viterbo's definition of $S^1$-equivariant symplectic homology (see~\cite{Viterbo99,BOGysin} and~\S\ref{sec:S1-Borel}), in which Morse theory on $X_{Borel}$ is seen as Morse theory modulo $S^1$ on $X\times ES^1$, where the $S^1$-action is free. The second point of view gives rise to the spectral sequence in Theorem~\ref{thm:Gysin-spec}(ii) and motivates the more algebraic approach of Seidel and Smith (see~\cite{Seidel07,Seidel-Smith-localization} and~\S\ref{sec:S1-families}-\ref{sec:simplifying}), in which Morse theory on $X_{Borel}$ is seen as Morse theory \emph{\`a la} {Hutchings}~\cite{Hutchings03} for a family of Morse functions on $X$, parametrized by $\C P^\infty$. The third point of view leads to the isomorphism in Theorem~\ref{thm:isomorphism}, and its topological counterpart is the following. 

\begin{lemma} \label{lem:Leray}
Assume $Stab(x)$ is finite at every point $x\in X$. Then the canonical projection $X_{Borel}\to X/S^1$ induces an isomorphism in homology with $\Q$-coefficients. 
\end{lemma}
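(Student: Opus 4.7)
The plan is to apply the Leray spectral sequence to the canonical projection $\pi : X_{Borel} \to X/S^1$.

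First I would identify the fibers. For $[x]\in X/S^1$, the preimage $\pi^{-1}([x])$ consists of classes $[(y,e)] \in X\times_{S^1} ES^1$ with $y \in S^1\cdot x$. Writing $y = s\cdot x$ and using the relation $(s\cdot x, e) \sim (x, s^{-1}\cdot e)$ in $X \times_{S^1} ES^1$, every point of the fiber is represented in the form $[(x, e)]$, and two such representatives are identified exactly when they differ by the residual action of an element of $Stab(x)$. Since $ES^1$ is a free $S^1$-space, this residual action of $Stab(x)\subset S^1$ on $ES^1$ is also free, so the fiber is a model for the classifying space $B\,Stab(x)$.

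Next I would invoke the classical fact that for a finite group $G$, $H^q(BG;\Q) = 0$ for all $q>0$ — this follows from the existence of the transfer map, since $\frac{1}{|G|}\cdot \mathrm{tr}\circ \mathrm{res}$ is the identity on $H^*(BG;\Q)$ while restriction to the trivial subgroup kills positive-degree classes. By the hypothesis that every stabilizer is finite, each fiber of $\pi$ is therefore rationally acyclic in positive degrees.

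Finally, the Leray spectral sequence
$$E_2^{p,q} = H^p\bigl(X/S^1;\, R^q\pi_*\underline{\Q}\bigr) \;\Longrightarrow\; H^{p+q}(X_{Borel};\Q)$$
has stalks $(R^q\pi_*\underline{\Q})_{[x]} \cong H^q(B\,Stab(x);\Q)$, which vanish for $q>0$. The spectral sequence therefore collapses at $E_2$, and the edge homomorphism gives an isomorphism $H^*(X/S^1;\Q) \cong H^*(X_{Borel};\Q)$; the corresponding statement on rational homology then follows by universal coefficients over $\Q$. The main obstacle is technical rather than conceptual: the Leray/Vietoris-Begle machinery requires mild regularity hypotheses (paracompactness, or working with sheaves on sufficiently nice spaces) for the stalks of $R^q\pi_*\underline{\Q}$ to coincide with the cohomology of the fibers. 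In the settings where this lemma is applied (CW complexes, manifolds, orbifolds) these conditions are automatic, so the argument goes through without further work.
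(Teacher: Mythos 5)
Your proof is correct and follows essentially the same route as the paper: both identify the fiber of $X_{Borel}\to X/S^1$ over $[x]$ as $BStab(x)$, observe that finiteness of stabilizers forces $H^q(BStab(x);\Q)=0$ for $q>0$, feed this into the Leray spectral sequence so that only the $q=0$ row survives, and conclude. The extra detail you supply (the explicit description of the fiber via the equivalence relation, and the transfer-map argument for rational acyclicity of $BG$) is a welcome expansion of what the paper leaves implicit, but does not change the structure of the argument.
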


This Lemma has an algebraic counterpart in cyclic homology as the isomorphism over $\Q$ between the homology of the cyclic bicomplex and the homology of Connes' complex~\cite[Theorem~2.1.5]{Loday}.

The relationship between non-equivariant and $S^1$-equivariant symplectic homology is captured by the following theorem. Item~(ii) was stated in~\cite{Viterbo99} and item~(i) was proved by a different method in~\cite{BOGysin}.

\begin{theorem} \label{thm:Gysin-spec}
\renewcommand{\theenumi}{\roman{enumi}}
\begin{enumerate}
\item For $\dagger=\emptyset,-,+$ there are Gysin exact triangles 
\begin{equation} \label{eq:Gysintriangle}
\xymatrix
@C=20pt
{
SH_*^\dagger(W) \ar[rr] & & 
SH_*^{\dagger,S^1}(W) \ar[dl] \\ & SH_{*-2}^{\dagger,S^1}(W) \ar[ul]^{[+1]}  
}
\end{equation}
These exact triangles are compatible with the tautological ones in~\eqref{eq:tautologicaltriangle}.
\item For $\dagger=\emptyset,-,+$ there are spectral sequences $E^{\dagger,r}_{p,q}(W)$, $r\ge 2$ converging to $SH_*^{\dagger,S^1}$ with second page given by 
$$
E^{\dagger,2}_{p,q}\simeq H_p(BS^1)\otimes SH_q^\dagger(W).
$$
These are compatible with the tautological exact triangles in~\eqref{eq:tautologicaltriangle}. 
\end{enumerate}
\end{theorem}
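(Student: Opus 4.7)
The plan is to prove both parts by exploiting the equivalence, established in Section~\ref{sec:S1}, between Viterbo's and Seidel's models of $S^1$-equivariant symplectic homology, choosing for each part the model best suited to the statement.

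For part (i), I would work in Viterbo's definition (\S\ref{sec:S1-Borel}), in which, at ``finite level $N$,'' $S^1$-equivariant symplectic homology is computed by Morse-Floer theory on the free quotient $\hat W \times_{S^1} S^{2N+1}$. The Floer theory of $\hat W \times S^{2N+1}$ with $S^1$-equivariant auxiliary data splits by a K\"unneth-type argument as $SH_*(W) \otimes H_*(S^{2N+1})$, and the principal $S^1$-bundle $\hat W \times S^{2N+1} \to \hat W \times_{S^1} S^{2N+1}$ gives rise, via the classical Thom-Gysin construction adapted to the Floer complex, to a long exact sequence at the chain level. Passing to the direct limit $N \to \infty$ sends the contribution from the top cell of $S^{2N+1}$ to infinity in degree, leaving precisely the Gysin triangle~\eqref{eq:Gysintriangle}. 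Because the entire construction respects the Hamiltonian action filtration, the same argument produces the triangles for $\dagger = \pm$.

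For part (ii), I would adopt Seidel's model (\S\ref{sec:S1-families}), in which the equivariant Floer complex is built on $C_*(W) \otimes \mathbb{Q}[u]$ with total differential $D = d + u\delta_1 + u^2\delta_2 + \cdots$. Here $d$ is the ordinary Floer differential, $u$ is the generator of $H^*(BS^1)$, and the $\delta_k$ are operations counting isolated Floer solutions in families parametrized by the $2k$-dimensional cell of the standard CW-structure on $BS^1 = \mathbb{C}P^\infty$. The $u$-adic (equivalently, cellular) filtration on this complex is preserved by $D$, bounded below in each fixed total degree, and exhaustive, so it yields a convergent spectral sequence. Taking homology of $d_0 = d$ column-by-column produces $E^2_{p,q} \simeq H_p(BS^1) \otimes SH_q(W)$ in the standard Leray-Serre reindexing, with subsequent differentials $d_r$ ($r \ge 2$) encoding the operations $\delta_k$ descended to $SH_*(W)$. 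The same filtration restricts to, and descends from, the action-truncated subcomplexes, providing the three claimed spectral sequences.

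Compatibility of the Gysin triangles and spectral sequences with the tautological triangle~\eqref{eq:tautologicaltriangle} follows from functoriality: the short exact sequence $0 \to C_*^-(W) \to C_*(W) \to C_*^+(W) \to 0$ determined by the action filtration is preserved by all $S^1$-equivariant and circle-bundle constructions above, so the data for $\dagger = \emptyset, -, +$ fit into the expected commutative diagrams. I expect the main technical difficulty to lie in part (i), namely verifying that the finite-level Gysin sequences genuinely commute with the direct limit $N \to \infty$. This reduces to checking that the continuation maps between Floer theories at levels $N$ and $N+1$ respect the circle-bundle structure of the corresponding Borel constructions, which is a standard if laborious task in the analytic setup of~\S\ref{sec:S1-Borel}.
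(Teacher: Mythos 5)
Your proposal for part~(ii) is essentially the paper's argument: Seidel's model gives a complex $\Z[u]\otimes_\Z SC_*(H)$ with a total differential $\p^{S^1}=\sum_j u^{-j}\varphi_j$, and filtering by powers of $u$ (the paper's $F_p\tilde C=\Z\langle 1,u,\dots,u^p\rangle\otimes C_\cdot$) produces a convergent spectral sequence with $E^2_{p,q}\simeq H_p(BS^1)\otimes SH_q(W)$. This is Proposition~\ref{prop:Gysin+spectral}(ii). Minor conventional discrepancies aside, you have the right idea here.

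For part~(i), however, your route diverges substantially from the paper, and it contains a genuine gap. The paper does \emph{not} go back to Viterbo's Borel model for the Gysin triangle: it instead observes that in the $S^1$-complex formalism there is a tautological short exact sequence of chain complexes
$$
0\longrightarrow C_\cdot\longrightarrow \tilde C_\cdot\longrightarrow \tilde C[-2]_\cdot\longrightarrow 0,
$$
where the first map is $x\mapsto 1\otimes x$ and the second is projection onto $u\Z[u]\otimes C_\cdot\simeq\tilde C[-2]_\cdot$ (Proposition~\ref{prop:Gysin+spectral}(i)). The Gysin triangle is just the associated long exact sequence, and one even reads off the chain-level formula for $B$ by a short diagram chase. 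Both the Gysin triangle and the spectral sequence therefore live in the \emph{same} algebraic model and come from the \emph{same} two lines of algebra, which is exactly what makes the compatibility claims tractable. You don't identify this short exact sequence, which is the key structural observation.

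The step that actually fails in your plan is the claimed K\"unneth decomposition ``the Floer theory of $\hat W\times S^{2N+1}$ with $S^1$-equivariant auxiliary data splits as $SH_*(W)\otimes H_*(S^{2N+1})$.'' The parametrized data is $S^1$-invariant, not product: the Hamiltonian $H_z(\theta,\cdot)$ is coupled to $z$ by $H_{\tau z}(\theta+\tau,\cdot)=H_z(\theta,\cdot)$, and the $z$-component of the Floer equation~\eqref{eq:Floer2} carries the coupling term $\int_{S^1}\vec\nabla_z H(\theta,u,z)\,d\theta$. So the parametrized Floer complex does not split as a tensor product; if it did, the spectral sequence in part~(ii) would collapse at $E^2$ with no information, and the $S^1$-equivariant theory would be uninteresting. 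The correct geometric route (the one in~\cite{BOGysin}, which the paper explicitly labels ``a different method'') exploits the Morse--Bott structure of the critical circles $S_p$ and the resulting cascades spectral sequence, not a K\"unneth splitting. If you want to pursue the Borel-model route you would need to import that machinery wholesale; the paper's algebraic approach avoids it entirely.

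Finally, your compatibility assertion by ``functoriality'' is underdeveloped. The paper's Proposition~\ref{prop:Gysin-spec} invokes Lemma~\ref{lem:grid}, which records the nonobvious fact that when one assembles a $3\times 3$ grid of short exact sequences of complexes, eight of the nine squares of long exact sequences commute but the ninth (formed by two connecting homomorphisms) \emph{anti}-commutes. This sign is needed to make the Gysin triangles for $SC_*^{S^1,-}\subset SC_*^{S^1}$ genuinely commute with the tautological triangle, and it is not visible from naive ``preserved short exact sequence'' reasoning.
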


Up to the change of notation described in Remark~\ref{rmk:notation}, the main result of~\cite{BOcont} is an exact triangle of the form 
\begin{equation} \label{eq:Gysin-contact}
\xymatrix
@C=20pt
{
SH_*^+(W) \ar[rr] & & 
CH_*^{lin}(M)  \ar[dl] \\ & CH_{*-2}^{lin}(M) \ar[ul]^{[+1]}  
}
\end{equation}

In order to make the connection with the Gysin exact triangle of~\eqref{eq:Gysintriangle} we shall upgrade $CH_*^{lin}(M)$ to a \emph{filled linearized contact homology group} $\overline{CH}_*^{lin}(M)$, so that the two are related by a \emph{tautological exact triangle}
\begin{equation} \label{eq:taut-triangle-contact}
\xymatrix
@C=20pt
{
H_{*+n}^{S^1}(W,M)\ar[rr] & & 
\overline{CH}_*^{lin}(M)  \ar[dl] \\ & CH_*^{lin}(M) \ar[ul]^{[-1]}  
}
\end{equation}

The paper~\cite{BOcont} also contains a definition of a non-equivariant version of linearized contact homology, that we denote here $NCH_*^{lin}(M)$. We shall upgrade this to a \emph{filled non-equivariant linearized contact homology group} $\overline{NCH}_*^{lin}(M)$, so that the two are related by a tautological exact triangle
\begin{equation} \label{eq:taut-triangle-ncontact}
\xymatrix
@C=20pt
{
H_{*+n}(W,M)\ar[rr] & & 
\overline{NCH}_*^{lin}(M)  \ar[dl] \\ & NCH_*^{lin}(M) \ar[ul]^{[-1]}  
}
\end{equation}
Similarly to~\eqref{eq:tautologicaltriangle}, the above two exact triangles only carry information related to closed Reeb orbits in $M$ which are contractible in $W$.

The definition of the groups $\overline{CH}_*^{lin}$, $NCH_*^{lin}$, $\overline{NCH}_*^{lin}$ is subject to similar caveats as in the case of linearized contact homology: they are defined in very restrictive cases, they depend a priori on the choice of auxiliary data (contact form, almost complex structure, Morse functions), and invariance is expected to be a consequence of polyfold theory. However, as for $CH_*^{lin}$, we do not use invariance in this paper and work with specific choices of auxiliary data. 

It is a direct consequence of the main construction in~\cite{BOcont} that these homology groups fit into Gysin exact triangles 
\begin{equation} \label{eq:Gysin-triangle-ncontact}
\xymatrix
@C=20pt
{
NCH_*^{lin}(M) \ar[rr] & & 
CH_*^{lin}(M)  \ar[dl] \\ & CH_{*-2}^{lin}(M) \ar[ul]^{[+1]}  
}
\end{equation}
and
\begin{equation} \label{eq:Gysin-triangle-ncontact-filled}
\xymatrix
@C=20pt
{
\overline{NCH}_*^{lin}(M) \ar[rr] & & 
\overline{CH}_*^{lin}(M)  \ar[dl] \\ & \overline{CH}_{*-2}^{lin}(M) \ar[ul]^{[+1]}  
}
\end{equation}

\begin{theorem} \label{thm:isomorphism}
Assume the coefficient ring contains $\Q$. Then there are natural isomorphisms between the Gysin triangle~\eqref{eq:Gysintriangle} for $SH_*^+$ and the exact triangle~\eqref{eq:Gysin-triangle-ncontact}, as well as between the Gysin triangle~\eqref{eq:Gysintriangle} for $SH_*$ and the exact triangle~\eqref{eq:Gysin-triangle-ncontact-filled}. 

These isomorphisms are compatible with the tautological triangles~\eqref{eq:tautologicaltriangle} and~\eqref{eq:taut-triangle-ncontact}.
\end{theorem}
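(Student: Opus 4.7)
The plan is to realize both triangles by a chain-level comparison built from an $S^1$-invariant Morse--Bott Hamiltonian setup on the symplectic side. Using a Hamiltonian $H$ on $\hat W$ that is quadratic at infinity and $S^1$-invariant, one arranges the nonconstant $1$-periodic orbits to come in $S^1$-families corresponding bijectively to closed Reeb orbits on $M$. After a small Morse--Bott perturbation with two critical points per $S^1$-family, the positive Floer complex computing $SH_*^+(W)$ has exactly two generators per Reeb orbit. Following the Morse--Bott construction used in~\cite{BOcont}, one identifies this chain complex with the non-equivariant linearized contact complex computing $NCH_*^{lin}(M)$, which yields the chain-level isomorphism $SH_*^+(W) \cong NCH_*^{lin}(M)$.

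To handle the $S^1$-equivariant side, I would use the parametrized Morse--Bott model for $SH_*^{+,S^1}(W)$ developed in Section~\ref{sec:S1}, based on an increasing family of Hamiltonians parametrized by $S^{2N+1}$. In this model each Reeb orbit $\gamma$ contributes a tower of generators $\gamma \otimes H_*(BS^1)$, with a differential combining the Floer differential and the $S^1$-action operator, giving the structure of a mixed chain complex. On the contact side, $CH_*^{lin}(M)$ has one generator per Reeb orbit (the unparametrized, quotient picture). The key input is Lemma~\ref{lem:Leray}, applied to the $S^1$-action on the set of parametrized nonconstant Reeb orbits: a $k$-fold covered orbit has stabilizer $\Z/k$, so all stabilizers are finite, and rationally the Borel construction agrees with the quotient. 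Translated into chain-level language, this is precisely the isomorphism between the mixed complex and Connes' complex~\cite[Theorem~2.1.5]{Loday}, and it yields $SH_*^{+,S^1}(W) \cong CH_*^{lin}(M)$.

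Compatibility of these two isomorphisms with the Gysin maps is built into the chain models: the map $SH_*^+ \to SH_*^{+,S^1}$ is realized as the inclusion of non-equivariant generators paired with $1 \in H_0(BS^1)$ in the Borel tower, which under the above identifications corresponds to the natural projection $NCH_*^{lin} \to CH_*^{lin}$ appearing in~\eqref{eq:Gysin-triangle-ncontact}. To pass from the $SH^+$ case to the full $SH$ case (and from $CH_*^{lin}$ to the filled $\overline{CH}_*^{lin}$), I would apply the five lemma to the morphism between the tautological triangles~\eqref{eq:tautologicaltriangle} and~\eqref{eq:taut-triangle-ncontact}: the isomorphism on $SH_*^{+,[S^1]}$ is the one just constructed, while the isomorphism on $H_{*+n}^{[S^1]}(W,M)$ is tautological since the constant-orbit part of the Floer complex is pointwise $S^1$-fixed by design. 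The main obstacle is the careful construction of the Morse--Bott and parametrized chain models, and the verification that the $S^1$-equivariant continuation maps and the Borel tower structure are compatible with the decomposition of Reeb orbits by covering multiplicity, so that the hypothesis $\Q \subseteq R$ is used exactly where Lemma~\ref{lem:Leray} requires it, namely to invert the orders of the finite stabilizers $\Z/k$ of multiply covered orbits.
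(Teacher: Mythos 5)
Your overall strategy matches the paper's: pass to a Morse--Bott model for the positive Floer complex, compare with the linearized contact complex following~\cite{BOcont}, realize $SH_*^{+,S^1}$ via a parametrized/Borel $S^1$-complex, and invoke Lemma~\ref{lem:Leray} (rationally trivial homology of finite stabilizers) to turn the resulting $E^1$-page computation into a quasi-isomorphism. The one structural difference is that the paper routes the comparison through an intermediate Hamiltonian object, \emph{positive invariant symplectic homology} $SH_*^{+,\mathrm{inv}}(W)$, which is shown in~\cite{BOcont} to agree with $CH_*^{\mathrm{lin}}(M)$; the actual chain map $\Pi$ lands there. This is technical, not conceptual, and your outline tracks the real proof correctly on this front, including the precise role of $\Q$-coefficients and why only $\varphi_0$ and $\varphi_1$ survive (giving a genuine mixed complex, not just an $S^1$-complex up to homotopy).

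The significant gap is in the compatibility claims, in two places. First, you assert that compatibility with the Gysin triangle ``is built into the chain models'' because the inclusion $x\mapsto 1\otimes x$ goes over to the natural map on the contact side. That only addresses one of the three sides of the triangle. The paper's Proposition~\ref{prop:isomorphism} devotes an entire part (III) to verifying the three squares of the ladder, and the outcome is \emph{not} plain commutativity: the first and third squares commute, but the middle square (the one involving the shift map $S$, compared against the $d^2$ of the spectral sequence) is \emph{anti}-commutative, and establishing this requires an explicit cycle-level $E^2$-page computation. This sign issue would be invisible to your argument and could easily derail a naive ``built into the model'' claim. Second, for the filled case, the five lemma applied to the morphism of tautological triangles does produce an isomorphism of groups $SH_*^{S^1}(W)\cong \overline{CH}_*^{\mathrm{lin}}(M)$, but the theorem asserts an isomorphism of \emph{exact triangles} between~\eqref{eq:Gysintriangle} for $SH_*$ and~\eqref{eq:Gysin-triangle-ncontact-filled}. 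The five lemma gives you no control on the Gysin maps themselves. The paper avoids this by \emph{defining} $\overline{CH}_*^{\mathrm{lin}}$ as a mapping cone and building explicit chain homotopy equivalences $C(\Psi'_a)\simeq C(\Psi_a)\simeq C(\tPhi_a)=SC_*^{S^1}(H^a)$ (the diagrams~\eqref{diag:tPhi-Psi} and~\eqref{diag:Psi-Psi-prime}), so that functoriality of the cone and of the Gysin sequence delivers all the required compatibilities at once. If you want to use the five lemma, you must first construct the chain-level map underlying the middle vertical arrow and check compatibility with both the tautological and the Gysin triangles, which is essentially the same geometric work.
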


The isomorphisms in Theorem~\ref{thm:isomorphism} are functorial with respect to exact inclusions of Liouville domains. This topic is taken up in~\cite{Cieliebak-Oancea}.

There are many other general questions about $S^1$-equivariant symplectic homology that we do not touch upon in this paper. One of them is \emph{functoriality} in the sense of Viterbo, which is treated for $SH_*^{S^1}$ in~\cite{Viterbo99}, and for $SH_*^{+,S^1}$ in~\cite{Gutt-thesis}, see also~\cite{Cieliebak-Oancea}. The transfer morphism described in~\S\ref{sec:rigorousCHcyl} plays a key role in the proof of invariance. Another question is that of the \emph{operations} that it carries. An overview is given by Seidel in~\cite{Seidel07}, and we explained in~\cite[Introduction]{BOGysin} a way to recover some of them from the Gysin sequence and from the product on symplectic homology, inspired by the paper of Chas and Sullivan~\cite{CS} on string topology operations. Yet another question is that of other versions of $S^1$-equivariant theories, analogous to \emph{negative} and \emph{periodic} cyclic homology theories. These are mentioned in Seidel~\cite{Seidel07} but their properties still remain to be investigated. Another discussion to be developed is that of the relationship with \emph{cyclic homology} of the Fukaya category for Lefschetz fibrations~\cite{Seidel-Hochschild}, also in relation with the isomorphism between symplectic homology and Hochschild homology proved by Ganatra and Maydanskiy~\cite{Bourgeois-Ekholm-Eliashberg-1}. Yet another remark is that the techniques of this paper can be adapted in order to define \emph{$S^1$-equivariant Rabinowitz Floer homology} (see~\cite{Cieliebak-Frauenfelder} for the original construction). Formal statements like the existence of the Gysin sequence and of the spectral sequence will hold in such a setup too. Such a construction would be relevant for the circle of ideas discussed in~\cite{Frauenfelder-Schlenk-vanKoert}, related to displaceability and the mean Euler characteristic.


The plan of the paper is the following. Section~\S\ref{sec:S1} deals with various versions of $S^1$-equivariant symplectic homology and the proof of Theorem~\ref{thm:Gysin-spec} appears at the end of~\S\ref{sec:algebraicS1}. Section~\S\ref{sec:CHlin} deals with the relationship with linearized contact homology using $\Q$-coefficients. The proof of Lemma~\ref{lem:Leray} is given at the beginning of the section and the proof of Theorem~\ref{thm:isomorphism} for positive $S^1$-equivariant symplectic homology is given in~\S\ref{sec:S1-CH}. Filled linearized contact homology is defined in~\S\ref{sec:CHlin-defi}. We do not spell out the proof of Theorem~\ref{thm:isomorphism} for filled linearized contact homology since it combines in a straightforward way the definition with the argument given for the positive case. In~\S\ref{sec:applications} we present some algebraic applications, and we also introduce a rigorously defined substitute for cylindrical contact homology using an $S^1$-equivariant point of view. We illustrate the versatility of this new invariant by presenting the subcritical surgery exact triangle from~\cite{Cieliebak-Oancea}. We also give an $S^1$-equivariant viewpoint on previous computations of Mei-Lin Yau and Cieliebak-Latschev for linearized contact homology.

\noindent {\bf Acknowledgements.} \S\ref{sec:S1} recasts and extends our previous joint work~\cite{BOGysin}. We are grateful to an anonymous referee for having put us on the right track by suggesting to investigate a more algebraic definition of $S^1$-equivariant Floer homology. This paper also owes a lot to discussions with Kai Cieliebak and Paul Seidel. We have benefited from conversations with many other mathematicians, and in particular Mohammed Abouzaid, Peter Albers, Yasha Eliashberg, Joel Fish, Nancy Hingston, Janko Latschev, Jean-Louis Loday, Jo Nelson, Dietmar Salamon, Claude Viterbo. The second author is particularly grateful to Ga\"el Collinet and Pierre Guillot for an illuminating discussion on Lemma~\ref{lem:Leray} at an early stage of this work.

F.B.: Partially supported by ERC Starting Grant StG-239781-ContactMath. A.O.: Partially supported by ERC Starting Grant StG-259118-Stein and NSF Grant DMS-0635607. Any opinions, findings and conclusions or recommendations expressed in this material are those of the author(s) and do not necessarily reflect the views of the National Science Foundation. A.O. acknowledges the hospitality of the Institute for Advanced Study, Princeton for the academic year 2011-2012 during which this paper was written. The present work is part of the authorsÕ activities within CAST, a Research Network Program of the European Science Foundation.

\section{$S^1$-equivariant symplectic homology} \label{sec:S1}

Hamiltonian Floer homology is sometimes heuristically referred to as the homology of the free loop space $\cL\hat W$ in semi-infinite dimensions. This was the object of extensive study ever since its discovery by Floer~\cite{Floer-SympFixedPoints}. Although it was clear for a long time that there should exist an $S^1$-equivariant version of Hamiltonian Floer homology~\cite{FHS} with reference to the canonical $S^1$-action, the first rigorous definition was given much later by Viterbo~\cite{Viterbo99}, and an alternative definition was sketched even later by Seidel~\cite{Seidel07}. Givental had earlier used a formal definition of $S^1$-equivariant Floer homology in his work on Mirror Symmetry~\cite{Givental}. 

In this section we give three equivalent definitions of $S^1$-equivariant symplectic homology. The first one is  Viterbo's~\cite[\S5]{Viterbo99} and we recall for the reader's convenience the discussion in~\cite{BOGysin,BOtransv,BOindex}. This mimicks the Borel construction at the level of the Hamiltonian action functional. The second one is inspired by Seidel's~\cite[\S8b]{Seidel07}, based on an idea of Hutchings~\cite{Hutchings03}, see also Seidel and Smith~\cite[\S2.1 and \S3.2]{Seidel-Smith-localization}. This recasts the Borel construction using the fibration structure $X\hookrightarrow X_{Borel}\to \C P^\infty$ and its main advantage is that the definition of the Floer differential uses standard continuation maps, for which no new analysis is needed (compare~\cite{BOtransv}). The third definition is a refinement of the second, and uses a canonical $S^1$-invariant extension of Hamiltonians on $\hat W$, based on the description of $S^{2N+1}$ as the join of $N+1$ copies of $S^1$.

Although we place ourselves within the setup of open manifolds, all the constructions work for closed symplectic manifolds, and the symplectic atoroidality assumption can be removed, provided of course one chooses a setup (monotone, semi-positive etc.) for which the usual, non-equivariant version of Floer homology is defined. 

Unless otherwise specified $(W,\omega)$ denotes in this section a Liouville domain with torsion first Chern class. 
The primitive of $\omega$ is denoted $\lambda$, its restriction to $M$ is denoted $\alpha$, and the primitive of $\hat\omega$ on $\hat W$ is denoted $\hat\lambda$.

\subsection{Borel construction}~\cite{Viterbo99,BOtransv}
\label{sec:S1-Borel}

Let $\cH$ denote the space of $1$-periodic Hamiltonians $H:S^1\times \hat W\to \R$ such that $H<0$ on $W$ and $H$ is of the form $H(\theta, r, x)=ar+b$ on $[R,\infty[\times M$, $R\gg 1$ with $a,b\in\R$ and $a>0$ not equal to the period of a closed Reeb orbit on $M$. We refer to $a$ as \emph{the slope of $H$}. 

Denote by $R_\alpha$ the Reeb vector field of the contact form $\alpha$ on $M$, defined by $\iota_{R_\alpha}\omega|_M=0$, $\alpha(R_\alpha)=1$. Let $\cJ$ be the space of $1$-periodic time-dependent almost complex structures on $\hat W$ which are compatible with $\hat \omega$ and which, on $[R,\infty[\times M$, $R\gg 1$, are independent of $\theta$, preserve $\xi$, and satisfy $J(r\frac\p {\p r})=R_\alpha$. 

Viterbo's idea to define $S^1$-equivariant symplectic homology~\cite[\S5]{Viterbo99} is to mimick Morse theory for the Borel construction at the level of the action functional: this is seen as Floer theory \emph{modulo $S^1$} for a functional defined on $\cL\hat W\times ES^1$ which is invariant with respect to the diagonal $S^1$-action. In practice, one approximates $ES^1$ by $S^{2N+1}$ and lets $N\to\infty$. 

The easiest way to provide such $S^1$-invariant action functionals is using $S^1$-invariant Hamiltonian families $H:S^1 \times \hat W \times S^{2N+1} \to \R$. Denote $H_z:=H(\cdot,\cdot,z)$. The $S^1$-invariance condition means that $H_{\tau z}(\theta+\tau,\cdot)=H_z(\theta,\cdot)$ for all $\theta,\tau\in S^1$, $z\in S^{2N+1}$. The corresponding family of
action functionals
$$
\cA : \cL \hat W\times S^{2N+1} \to \R,
$$
$$
\cA(\gamma,z) := \cA_z(\gamma) := -\int_{S^1} \gamma^*
\hat\lambda - \int_{S^1} H_z(\theta,\gamma(\theta)) \, d\theta
$$
is then $S^1$-invariant with respect to the diagonal $S^1$-action on $\cL\hat W\times S^{2N+1}$.

A significant particular case is that of families given by a single autonomous Hamiltonian $H(\theta,x,z)=H(x)$. 

We have studied such parametrized Hamiltonian action functionals in~\cite{BOtransv,BOindex}. The differential of $\cA$ is given by
\begin{equation*} 
d\cA(\gamma,z) \cdot (\zeta,\ell)=
\int_{S^1}\hat \om(\dot\gamma(\theta)-X_{H_z}(\gamma(\theta)),\zeta(\theta))
d\theta
-
\int_{S^1} \frac {\p H_z} {\p z} (\theta,\gamma(\theta)) d\theta
\cdot \ell
\end{equation*}
and therefore $(\gamma,z)$ is a critical point of $\cA$ if and
only if
\begin{equation} \label{eq:periodicpar}
 \gamma\in\cP(H_z) \quad \mbox{and} \quad
 \int_{S^1} \frac {\p H_z} {\p z} (\theta,\gamma(\theta))\,
d\theta =0.
\end{equation}
We denote by $\cP(H)$ the set of critical points of $\cA$. 
This set is $S^1$-invariant. Given
$p\in \cP(H)$ we denote $S_p$ the \emph{$S^1$-orbit of $p$}, so that $S_p=S_{\tau \cdot p}$, $\tau\in S^1$. 

An $S^1$-invariant Hamiltonian family $H:S^1\times \hat W\times S^{2N+1}\to \R$ is called \emph{admissible} if $H_z\in\cH$ for all $z$, with constant slope not depending on $z$. We denote the space of such families $H$ by $\cH^{S^1}_N$.

A family of almost complex structures
$J=(J_z^\theta)$, $\theta\in S^1$, $z\in S^{2N+1}$ is
called \emph{$S^1$-invariant} if it satisfies the condition
$J_{\tau z}^{\theta+\tau}=J_z^\theta$ for all $\tau\in S^1$ and all $\theta,z$,  
and is called \emph{admissible} if $J_z\in\cJ$ for all $z$. 
Such a $J$ induces an $S^{2N+1}$-family of $L^2$-metrics on
$\cL\hat W$ defined by
$$
\langle \zeta,\eta\rangle_z := \int_{S^1}
\om(\zeta(\theta),J_z^\theta\eta(\theta)) d\theta, \quad \zeta,\eta\in
T_\gamma \cL\hat W=\Gamma(\gamma^*T\hat W).
$$
The $S^1$-invariance condition ensures that, when coupled with an
$S^1$-invariant metric $g$ on $S^{2N+1}$, this family gives
rise to an $S^1$-invariant metric on
$\cL\hat W \times S^{2N+1}$. We denote by $\cJ_N^{S^1}$
the set of pairs $(J,g)$ consisting of an $S^1$-invariant admissible
family of almost complex structures $J$ and of an
$S^1$-invariant Riemannian metric $g$.

Given
$H\in\cH^{S^1}_N$, $(J,g)\in\cJ^{S^1}_N$, and
$\op:=(\og,\oz),\up:=(\ug,\uz)\in \cP(H)$, we denote by
$$
\hat \cM(S_\op,S_\up;H,J,g)
$$
the \emph{space of $S^1$-equivariant Floer trajectories}, consisting of
pairs $(u,z)$ with
$$
u:\R\times S^1 \to \hat W, \qquad z:\R\to S^{2N+1},
$$
satisfying
\begin{eqnarray}
\label{eq:Floer1}
 \p_s u + J_{z(s)}^\theta (\p_\theta u - X_{H_{z(s)}}^\theta (u)) & = & 0, \\
\label{eq:Floer2}
 \dot z (s) - \int_{S^1} \vec \nabla_z
H(\theta,u(s,\theta),z(s)) d\theta & = & 0,
\end{eqnarray}
and
\begin{equation} \label{eq:asymptotic}
 \lim_{s\to -\infty} (u(s,\cdot),z(s)) \in S_\op, \quad
 \lim_{s\to +\infty} (u(s,\cdot),z(s)) \in S_\up.
\end{equation}

These are formally the equations of negative $L^2$-gradient lines for $\cA$. If $\op$ and $\up$ lie on different $S^1$-orbits, the group $\R$ acts on $\hat \cM(S_\op,S_\up;H,J,g)$
by reparametrizations in the $s$-variable. We denote by
$$
\cM(S_\op,S_\up;H,J,g) := \hat \cM(S_\op,S_\up;H,J,g)/\R
$$
the \emph{moduli space of $S^1$-equivariant Floer trajectories}.

The study of~(\ref{eq:Floer1}--\ref{eq:asymptotic}) makes essential use of the operators $D_{(u,z)}$ that linearize the system, with asymptotic expressions encoded in the asymptotic operators $D_p$, $p\in\cP(H)$. We refer to~\cite[\S2]{BOtransv} for precise formulas for these operators. 

An $S^1$-orbit of critical points $S_p\subset \cP(H)$ is called \emph{nondegenerate} if the Hessian $d^2\cA(\gamma,z)$ has a
  $1$-dimensional kernel $V_p$
  for some (and hence any) $(\gamma,z)\in S_p$. 
A generator of
$V_p$ is then given by the infinitesimal generator of the $S^1$-action.
We define the set $\cH^{S^1}_{N,\reg}\subset \cH^{S^1}_N$ to consist
of elements $H$ all of whose $S^1$-orbits are nondegenerate. We proved in~\cite[Proposition~5.1]{BOtransv} that
 $\cH^{S^1}_{N,\reg}$ is of second Baire category in
 $\cH^{S^1}_N$. Moreover, if $H\in \cH^{S^1}_{N,\reg}$, each
 $S^1$-orbit $S_p\subset \cL\hat W\times S^{2N+1}$ is
 isolated and there are only finitely many of them. 

Let $d>0$ be small enough (for a fixed $H\in\cH^{S^1}_{N,\reg}$, one
can take $d>0$ to be smaller than the minimal spectral gap of the asymptotic
operators $D_p$, $p\in\cP(H)$), and fix $1<p<\infty$. Given
$\op,\up\in \cP(H)$ and $(u,z)\in \hat
\cM(S_\op,S_\up;H,J,g)$, we define
\begin{eqnarray*}
  \cW^{1,p,d} & := & W^{1,p}(u^*T\widehat
  W;e^{d|s|}dsd\theta) \oplus W^{1,p}(z^*
  TS^{2N+1};e^{d|s|}ds)\oplus V_{\op}\oplus V_{\up}, \\
 \cL^{p,d} & := & L^p(u^*T\widehat
  W;e^{d|s|}dsd\theta) \oplus L^p(z^*
  TS^{2N+1};e^{d|s|}ds).
\end{eqnarray*}
Here we identify $V_{\op}$, $V_{\up}$ with the $1$-dimensional spaces
generated by the sections $\beta(s)(\dot\og,X_{\oz})$, respectively $\beta(-s)(\dot\ug,X_{\uz})$
 of $u^*T\widehat W\oplus z^*TS^{2N+1}$. For this identification, we denote by $X_{\oz}$, $X_{\uz}$ the values of the infinitesimal generator of the $S^1$-action on $S^{2N+1}$
 at the points $\oz$, respectively $\uz$, and choose a cut-off function $\beta:\R\to [0,1]$ which is equal to $1$ near $-\infty$, and vanishes near $+\infty$.

\begin{proposition}[{\cite[Proposition~4.4]{BOGysin}}] \label{prop:indexMB}
Assume $S_\op,S_\up\subset \cP(H)$ are nondegenerate. For any
$(u,z)\in \hat \cM(S_\op,S_\up;H,J,g)$ the operator
$$
D_{(u,z)}: \cW^{1,p,d} \to \cL^{p,d}
$$
is Fredholm of index
$$
\ind\, D_{(u,z)} = -\mu(\op) +\mu(\up) + 1.
$$
Here $\mu(\op)$, $\mu(\up)$ denote the parametrized Robbin-Salamon indices, as defined in~\cite{BOindex}.
\end{proposition}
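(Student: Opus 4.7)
The strategy is the standard spectral-flow index calculation for a Floer-type operator on weighted Sobolev spaces, adapted to the fact that the asymptotic operators $D_{\op}$ and $D_{\up}$ have nontrivial (one-dimensional) kernels coming from the $S^1$-symmetry. I would organize the argument in four steps.

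\emph{Step 1: Fredholm property on weighted spaces.} I first linearize \eqref{eq:Floer1}--\eqref{eq:Floer2} at $(u,z)$, obtaining an operator of the form $\partial_s + A(s)$ on sections of $u^*T\hat W \oplus z^*TS^{2N+1}$, with off-diagonal terms coming from the $z$-derivative of $H$. The asymptotic operators as $s\to \pm\infty$ are exactly $D_{\op}$, $D_{\up}$ from \cite{BOtransv}. The nondegeneracy hypothesis on $S_\op$, $S_\up$ (in the $S^1$-sense of \S\ref{sec:S1-Borel}) means that $\ker D_p = V_p$ is one-dimensional and spanned by the infinitesimal $S^1$-direction. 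Because $d$ is chosen smaller than the minimal spectral gap of the $D_p$'s, the conjugated operators $e^{d|s|/p}(\partial_s+A(s))e^{-d|s|/p}$ have invertible asymptotic operators at both ends, and hence the operator $D_{(u,z)}$ is Fredholm on the unadorned weighted space $W^{1,p,d}_0 := W^{1,p}(u^*T\hat W;e^{d|s|})\oplus W^{1,p}(z^*TS^{2N+1};e^{d|s|})$, by the standard Lockhart--McOwen/Schwarz theory.

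\emph{Step 2: Index on the unadorned weighted space.} The index in this weighted setting equals the spectral flow of the path $s\mapsto A(s)$, computed after shifting the asymptotic operators so as to move their zero eigenvalue to the \emph{correct} side at each end. Concretely, the kernel direction of $D_\op$ at $-\infty$ is pushed to a \emph{positive} eigenvalue by the weight $e^{-ds}$, while the kernel direction of $D_\up$ at $+\infty$ is pushed to a \emph{negative} eigenvalue by $e^{ds}$. Expressing this spectral flow via the parametrized Robbin--Salamon index of \cite{BOindex}, which is by construction the right invariant in the degenerate situation, one obtains
\begin{equation*}
\ind\bigl(D_{(u,z)}\colon \cW^{1,p,d}_0 \to \cL^{p,d}\bigr) = -\mu(\op)+\mu(\up)-1,
\end{equation*}
the $-1$ encoding the convention of resolving the two kernel directions on the ``wrong'' side.

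\emph{Step 3: Contribution of $V_{\op}\oplus V_{\up}$.} The actual domain $\cW^{1,p,d}$ enlarges $\cW^{1,p,d}_0$ by the two explicit one-dimensional cut-off sections $\beta(s)(\dot\og,X_{\oz})$ and $\beta(-s)(\dot\ug,X_{\uz})$, each of which lies in the extended kernel of $D_{(u,z)}$ \emph{modulo} lower-order terms (they are, after cut-off, the infinitesimal $S^1$-generators that the weighted space excludes). Since these two sections are linearly independent of $\cW^{1,p,d}_0$ and of one another, enlarging the domain by a $2$-dimensional subspace increases the Fredholm index by exactly $2$, yielding
\begin{equation*}
\ind D_{(u,z)} = -\mu(\op)+\mu(\up)+1,
\end{equation*}
as claimed.

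\emph{Main obstacle.} The delicate point is Step 2: correctly identifying the spectral flow of the coupled operator (Floer part + parameter-space part, with cross terms from $\partial_z H$) with the parametrized Robbin--Salamon index introduced in \cite{BOindex}. This requires checking that the Robbin--Salamon index defined there transforms as expected under the small spectral shift produced by the weight $e^{d|s|}$, and that the sign/orientation conventions match those used for $\mu(\op), \mu(\up)$. Once this matching is verified --- essentially by a homotopy argument reducing to a model asymptotic operator at each end --- the rest of the proof is an application of standard Fredholm theory on weighted Sobolev spaces in the spirit of Schwarz and Floer--Hofer, together with the bookkeeping of Step~3.
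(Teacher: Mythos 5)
The paper does not prove this proposition itself but cites it from \cite{BOGysin} (Proposition 4.4); the same authors carry out exactly this kind of argument in \cite{BOGysin} and \cite{BOauto}. Your proposal --- Fredholmness via the small weight $d$ below the spectral gap, spectral flow on the unaugmented weighted space identified with the parametrized Robbin--Salamon index, and then index additivity $+2$ for the finite-dimensional augmentation $V_{\op}\oplus V_{\up}$ --- is the standard route and is the one used in the cited reference; the arithmetic is self-consistent, since $\ind D_{(u,z)}$ on the augmented domain equals $\dim\hat\cM(S_\op,S_\up;H,J,g)$, which must be $\dim\cM_{S^1}+2=-\mu(\op)+\mu(\up)+1$, so the unaugmented index has to be $-\mu(\op)+\mu(\up)-1$ as you claim.
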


We recall from~\cite{BOindex} that, in order to define $\mu(p)$, one considers the extended Hamiltonian $\tH:S^1\times\hat W\times T^*S^{2N+1}\to \R$ defined by $\tH(\theta,x,(z,z^\vee)):=H(\theta,x,z)$, with $z^\vee$ the dual cotangent coordinate. The point $p=(\gamma,z)$ belongs to $\cP(H)$ if and only if $\tilde p:=(\gamma,(z,z^\vee(\cdot)))$ is a periodic orbit of $\tH$, with $z^\vee(\theta)=z^\vee(0)-\int_0^\theta\frac{\p H}{\p z}(t,\gamma(t),z)\,dt$.  The index $\mu(p)$ is by definition the Robbin-Salamon index of $\tilde p$ with respect to a symplectic trivialization along $\tilde p$ induced by a choice of trivialization along a reference loop in each free homotopy class in $\hat W$ (see for example~\cite[\S2]{BOauto}). Our assumption that $c_1(W)$ is torsion ensures that we do not have to use Novikov coefficients.

A pair $(J,g)\in \cJ^{S^1}_N$ is called
\emph{regular for $H\in\cH^{S^1}_{N,\reg}$} if the operator $D_{(u,z)}$ is surjective
for any $\op,\up\in\cP(H)$ and any $(u,z)\in \hat
\cM(S_\op,S_\up;H,J,g)$. We denote the set of such regular pairs by
$\cJ^{S^1}_{N,\reg}(H)$.
We defined in~\cite[\S7]{BOtransv} two distinguished classes $\cH_*\cJ'\subset \cH\cJ'$ in $\cH_N^{S^1}\times\cJ_N^{S^1}$, and we proved in~\cite[Theorem~7.4]{BOtransv} that
there exists an open subset $\cH\cJ'_{\mathrm{reg}}\subset \cH\cJ'$
which is dense in a neighborhood of $\cH_*\cJ'\subset \cH\cJ'$, consisting of triples $(H,J,g)$ such that
$H\in\cH^{S^1}_{N,\mathrm{reg}}$ and $(J,g)\in\cJ^{S^1}_{N,\mathrm{reg}}(H)$.

Given $(H,J,g)\in \cH\cJ'_{\reg}$, the moduli space $\cM(S_\op,S_\up;H,J,g)$ is a smooth manifold of dimension $-\mu(\op) +\mu(\up)$. 
Since $\cA$ and $(J,g)$ are $S^1$-invariant,
this moduli space
carries a free action of $S^1$ induced by the diagonal
action on $\cL\hat W \times S^{2N+1}$, and we denote the quotient by
$$
\cM_{S^1}(S_\op,S_\up;H,J,g) := \cM(S_\op,S_\up;H,J,g)/S^1.
$$
This is a smooth manifold of dimension
$$
\dim\, \cM_{S^1}(S_\op,S_\up;H,J,g) = -\mu(\op) +\mu(\up)  -1.
$$

An important feature of these moduli spaces is that they admit a system
of coherent orientations in the sense of~\cite{FH-coherent}. The difference with respect to
the setup of Floer homology is that the asymptotes for the moduli spaces are not fixed, but can vary along circles $S_p$, $p=(\gamma,z)\in\cP(H)$. However, if one chooses the trivializations of $\gamma^*T\widehat W \oplus T_z S^{2N+1}$ so that they are invariant under the $S^1$-action,
then the analytical expression of the asymptotic operators $D_p$, $p\in\cP(H)$ only depends on $S_p$.
It then follows from the arguments in~\cite{FH-coherent} that the relevant spaces of Fredholm operators, with fixed nondegenerate asymptotics, are contractible, and hence the corresponding determinant line bundles are orientable.
The system of coherent orientations on the moduli spaces $ \cM_{S^1}(S_\op,S_\up;H,J,g)$ is obtained by pulling back a system of coherent orientations on these spaces of Fredholm operators, as in~\cite{FH-coherent}.

We define
the \emph{$S^1$-equivariant chain complex} $SC^{S^1,N}_*(H,J,g)$ as the
chain complex whose underlying $\Z$-module is
\begin{equation} \label{eq:SCS1}
SC^{S^1,N}_*(H):=SC^{S^1,N}_*(H,J,g):=
\bigoplus_{S_p\subset \cP(H)} \Z\langle
S_p\rangle.
\end{equation}
The grading is defined by $|S_p| := -\mu(p) +N$. The \emph{$S^1$-equivariant differential}
$\partial^{S^1}:SC^{S^1,N}_*(H)\to SC^{S^1,N}_{*-1}(H)$ is defined
by
$$
\partial^{S^1}(S_\op):=\sum_{\substack{
  S_\up\subset \cP(H) \\
|S_\op| - |S_\up|=1}}
\ \sum_{\scriptstyle [u]\in \cM_{S^1}(S_\op,S_\up;H,J,g)}
\epsilon([u])S_\up.
$$
The sign $\epsilon([u])$ is obtained by comparing the coherent
orientation of the moduli space
$\cM_{S^1}(S_\op,S_\up;H,J,g)$ with the orientation induced by the
infinitesimal generator of the $S^1$-action.

\begin{proposition} \label{prop:partialS1}
The map $\partial^{S^1}$ satisfies
$$
\partial^{S^1}\circ \partial^{S^1}=0.
$$
\end{proposition}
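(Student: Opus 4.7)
The plan is to establish $\partial^{S^1}\circ\partial^{S^1}=0$ by the standard boundary-of-a-$1$-manifold argument, adapted to the parametrized setting. Fix $S_\op,S_\up\subset \cP(H)$ with $|S_\op|-|S_\up|=2$; only such pairs contribute to $\partial^{S^1}\circ\partial^{S^1}(S_\op)$. By Proposition~\ref{prop:indexMB} together with the hypothesis $(H,J,g)\in \cH\cJ'_{\reg}$, the moduli space $\cM_{S^1}(S_\op,S_\up;H,J,g)$ is a smooth $1$-manifold, and the coefficient of $S_\up$ in $\partial^{S^1}\circ\partial^{S^1}(S_\op)$ is the signed count of pairs $([u_1],[u_2])\in \cM_{S^1}(S_\op,S_{\tilde p})\times \cM_{S^1}(S_{\tilde p},S_\up)$ over intermediate $S^1$-orbits $S_{\tilde p}\subset \cP(H)$ with $|S_{\tilde p}|=|S_\op|-1$. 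It suffices to identify these pairs with the boundary of a compactification of $\cM_{S^1}(S_\op,S_\up;H,J,g)$.

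Next I would establish compactness. The $z$-component of a solution $(u,z)$ to~\eqref{eq:Floer1}--\eqref{eq:Floer2} automatically stays in the compact sphere $S^{2N+1}$. For the $u$-component, $C^0$-bounds on the cylindrical end follow from the standard maximum principle applied to $|u|$, since each $H_z$ is linear of the same slope $a\in\R$ which is not the period of any closed Reeb orbit; uniform energy bounds follow from monotonicity of $\cA$ along negative gradient lines. On Liouville domains, exactness of $\hat\omega$ rules out sphere bubbling; in the general case, symplectic atoroidality together with the torsion condition on $c_1(W)$ and transversality eliminates bubbling in codimension $\le 1$. By standard Gromov--Floer compactness in the parametrized setup of~\cite{BOtransv}, any sequence in $\cM_{S^1}(S_\op,S_\up;H,J,g)$ thus subconverges, up to reparametrization, to a once-broken trajectory through a single intermediate orbit $S_{\tilde p}$, with both pieces of parametrized index difference $1$, i.e., of dimension $0$ after quotienting by $S^1$. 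Conversely, the gluing theorem for the $S^1$-equivariant Fredholm problem of~\cite{BOtransv} produces, for each such broken configuration, a unique end of $\cM_{S^1}(S_\op,S_\up;H,J,g)$, thereby exhibiting the latter as a compact $1$-manifold with boundary precisely the set of once-broken configurations.

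Finally, the sign analysis proceeds as in~\cite{FH-coherent}. Using $S^1$-invariant trivializations of $\gamma^*T\hat W\oplus T_zS^{2N+1}$ along orbits $S_p$, the asymptotic operators $D_p$ depend only on $S_p$, so the coherent orientation scheme transports to the equivariant setting without change. A direct check of the pre-gluing/linear gluing isomorphism shows that for every once-broken configuration, the product orientation on $\cM_{S^1}(S_\op,S_{\tilde p})\times \cM_{S^1}(S_{\tilde p},S_\up)$ agrees with the boundary orientation induced on the corresponding end of $\cM_{S^1}(S_\op,S_\up;H,J,g)$. Summing over boundary points of a compact oriented $1$-manifold yields $\partial^{S^1}\circ\partial^{S^1}(S_\op)=0$. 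The main obstacle in this outline is the verification that gluing and orientation comparison descend correctly to the $S^1$-quotient; the key point is that the $S^1$-action on $\cM(S_\op,S_\up)$ is free (since $S_\op\neq S_\up$ as oriented objects whenever $|S_\op|\neq |S_\up|$), so that orientations on the quotient are induced unambiguously by the infinitesimal $S^1$-generator, and the quotient of a gluing collar still parametrizes a neighborhood of the broken stratum in $\cM_{S^1}$.
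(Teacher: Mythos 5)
Your proposal is a fleshed-out version of the first of the two arguments that the paper merely sketches: the paper says "the usual compactness-gluing argument in Floer homology ... carries over \emph{mutatis mutandis} to the present $S^1$-invariant situation," which is exactly what you spell out (maximum principle and energy bounds for compactness, Floer breaking and gluing for the $1$-manifold structure, coherent orientations via $S^1$-invariant trivializations, and the observation that the free $S^1$-action lets everything descend to the quotient moduli spaces). The paper also records a second, algebraic proof by identifying $\p^{S^1}$ with a spectral-sequence differential from~\cite{BOGysin}, which you do not touch; your geometric route is correct and matches the paper's first stated approach.
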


\begin{proof}[Sketch of proof.]
One way of proving this, once transversality is guaranteed using~\cite{BOtransv}, is by the usual compactness-gluing argument in Floer homology, which carries over \emph{mutatis mutandis} to the present $S^1$-invariant situation. 

A second proof has been given in~\cite[Proposition~4.5]{BOGysin} by identifying $\p^{S^1}$ with the differential on the first page of a spectral sequence that gives rise to the Gysin triangle~\eqref{eq:Gysintriangle}. This method of proof relies on the algebraic construction in~\cite{BOcont}.
\end{proof}

We define the \emph{$S^1$-equivariant Floer homology groups} by
$$
SH^{S^1,N}_*(H,J,g):=H_*(SC^{S^1,N}_*(H),\partial^{S^1}).
$$

A standard continuation argument proves that, given $H\in\cH^{S^1}_{N,\reg}$ and two elements 
$(J_1,g_1),(J_2,g_2)\in\cJ^{S^1}_{N,\reg}(H)$, there exists a
canonical isomorphism
$$
SH^{S^1,N}_*(H,J_1,g_1) \simeq SH^{S^1,N}_*(H,J_2,g_2).
$$
Given $H\in\cH^{S^1}_{N,\reg}$ let
$SH^{S^1,N}_*(H):=SH^{S^1,N}_*(H,J,g)$ for $(J,g)\in
\cJ^{S^1}_{N,\reg}(H)$. By a similar continuation argument the homology groups $SH_*^{S^1,N}(H)$ only depend on the value of the slope of $H$ at infinity. In analogy with the construction of symplectic
homology, we define
$$
SH^{S^1,N}_*(W):=\lim_{\stackrel \longrightarrow {H\in
  \cH^{S^1}_{N,\reg}}} SH^{S^1,N}_*(H).
$$
The \emph{$S^1$-equivariant symplectic homology groups of $W$} are
defined by
$$
SH^{S^1}_*(W):=\lim_{\stackrel \longrightarrow N} SH^{S^1,N}_*(W).
$$
The direct limit is taken with respect to the embeddings
$S^{2N+1}\hookrightarrow S^{2N+3}$, which induce maps
$SH^{S^1,N}_*(W)\to SH^{S^1,N+1}_*(W)$. (At this point it is essential that the grading differs from the parametrized Robbin-Salamon index by $N$.)

To define $SH_*^{-,S^1}(W)$ we fix $\eps>0$
small enough and set
$$
SC^{-,S^1,N}_*(H,J,g):=\bigoplus _{\substack{S_p\subset\cP(H)
    \\ \cA(p)\le \eps}} \Z\langle S_p\rangle \subset
SC^{S^1,N}_*(H,J,g)
$$
and
$$
SC^{+,S^1,N}_*(H,J,g):=SC^{S^1,N}_*(H,J,g)/SC^{-,S^1,N}_*(H,J,g).
$$
The differential on $SC^{\pm,S^1,N}_*(H,J,g)$ is induced by
$\partial^{S^1}$. The corresponding homology groups
$SH^{\pm,S^1,N}_*(H,J,g)$ do not depend on $(J,g)$ and $\eps$, and we
define
$$
SH^{\pm,S^1,N}_*(W):=\lim_{\stackrel \longrightarrow
  {H\in\cH^{S^1}_{N,\reg}}}SH^{\pm,S^1,N}_*(H,J,g).
$$
Passing to the direct limit over $N\to\infty$, we define
$$
SH^{\pm,S^1}_*(W):=\lim_{\stackrel \longrightarrow
  {N}}SH^{\pm,S^1,N}_*(W).
$$
We call $SH^{+,S^1}_*(W)$ the \emph{positive $S^1$-equivariant
  symplectic homology group} of $W$. It follows from the
definitions that $SH_*^{S^1}$ and $SH_*^{\pm,S^1}$ fit into the tautological exact triangle~\eqref{eq:tautologicaltriangle}.

\begin{lemma}[{\cite[Lemma~4.7]{BOGysin}}] \label{lem:minus} There is a natural isomorphism
$$
SH^{-,S^1}_*(W) \simeq H^{S^1}_{*+n}(W,\partial W).
$$
Here $H^{S^1}_{*+n}(W,\partial W)\simeq
H_{*+n}(W,\partial W)\otimes H_*(\C P^\infty)$
is the $S^1$-equivariant homology with integral coefficients for the pair $(W,\partial W)$
with respect to the trivial $S^1$-action.
\end{lemma}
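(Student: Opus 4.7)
The plan is to compute $SH^{-,S^1}_*(W)$ directly from the definition using a carefully chosen pair of small autonomous data, so that the chain complex splits as a tensor product computing $H_{*+n}(W,M)$ on one factor and $H_*(\C P^N)$ on the other. Choose a $C^2$-small Morse function $\tilde H$ on $W$, extended to an admissible Hamiltonian on $\hat W$ with slope smaller than the minimal period of closed Reeb orbits on $M$; with our sign conventions, its critical points are precisely the $1$-periodic orbits, realised as constants at critical points $x\in \mathrm{Crit}(\tilde H)$. On $S^{2N+1}$ choose the standard $S^1$-invariant Morse--Bott function $f(z_0,\dots,z_N)=\sum_j c_j|z_j|^2$ with $c_0<\dots<c_N$, whose critical set is the disjoint union of $N+1$ circles $S_0,\dots,S_N$. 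Set $H(\theta,x,z):=\tilde H(x)+f(z)$; this lies in $\cH^{S^1}_N$ because each summand is individually invariant under the respective actions, and the slope at infinity is that of $\tilde H$.

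Since $X_{H_z}=X_{\tilde H}$ is independent of $z$, and $\int_{S^1}\partial_z H\,d\theta=\nabla f(z)$, the defining equations~\eqref{eq:periodicpar} force $\cP(H)$ to consist of the pairs $(x,z)$ with $x$ a critical point of $\tilde H$ (viewed as a constant loop) and $z$ a critical point of $f$. Each $S^1$-orbit $S_{(x,S_k)}=\{x\}\times S_k$ is nondegenerate in the sense of~\S\ref{sec:S1-Borel}, and the action $\cA(x,z)=-\tilde H(x)-f(z)$ is arbitrarily small after rescaling, so every such orbit lies in $SC^{-,S^1,N}_*(H)$. A direct computation of the parametrized Robbin--Salamon index via the extended Hamiltonian on $T^*S^{2N+1}$ (using the trivialization along $S_k$ induced by the coordinates on $\C^{N+1}$) gives $\mu(S_{(x,S_k)})=n-\mathrm{ind}_{\mathrm{Morse}}(x)+N-2k$, so that the grading in~\eqref{eq:SCS1} becomes $|S_{(x,S_k)}|=\mathrm{ind}_{\mathrm{Morse}}(x)-n+2k$, independent of $N$.

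For a product metric on $\hat W\times S^{2N+1}$ and for $\tilde H$, $f$ sufficiently $C^2$-small, the parametrized Floer--Morse reduction of Floer--Hofer--Salamon applies in the $S^1$-equivariant setting: all solutions of~(\ref{eq:Floer1}--\ref{eq:asymptotic}) with asymptotics in $\cP(H)$ are $\theta$-independent and hence solve the decoupled negative gradient equations for $\tilde H$ on $\hat W$ and for $f$ on $S^{2N+1}$. Taking the quotient by the $S^1$-action on $S^{2N+1}$, the descended gradient flow on $\C P^N$ is Morse with cellular critical points of index $2k$. Consequently
\[
SC^{-,S^1,N}_*(H,J,g)\ \cong\ CM_{*+n}(W,M;\tilde H)\otimes_{\Z} CM_*(\C P^N;\bar f)
\]
as chain complexes, where the first factor is the Morse complex computing $H_{*+n}(W,M)$ (with our shifted grading convention) and the second is the Morse complex of the descended function on $\C P^N$, which is perfect and computes $H_*(\C P^N)$.

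Taking homology and then the direct limit over $N$, and using the Künneth formula (valid with $\Z$-coefficients because $H_*(\C P^\infty)$ is free), we obtain
\[
SH^{-,S^1}_*(W)\ \cong\ H_{*+n}(W,\partial W)\otimes H_*(\C P^\infty)\ \cong\ H^{S^1}_{*+n}(W,\partial W),
\]
the last identification being the fact that Borel equivariant homology for a trivial $S^1$-action is the homology of $X\times BS^1$. The main obstacle is the parametrized Floer--Morse reduction in the third step: one must verify, in the presence of the $S^{2N+1}$-parameter and of the $S^1$-symmetry, that low-action Floer trajectories remain $\theta$-independent and that transversality in $\cJ^{S^1}_{N,\reg}(H)$ can be achieved within the class of product metrics. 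This follows from the standard rescaling argument together with the $S^1$-equivariant transversality established in~\cite{BOtransv}.
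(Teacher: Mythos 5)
The paper itself gives no proof of this lemma; it simply cites \cite[Lemma~4.7]{BOGysin}, so I can only assess your argument on its own terms. Your approach --- decouple the parametrized Floer equations by choosing a product Hamiltonian $\tilde H(x)+f(z)$ with $\tilde H$ a $C^2$-small autonomous Morse function of small slope and $f$ the standard $S^1$-invariant quadratic on $S^{2N+1}$, together with a $z$- and $\theta$-independent almost complex structure, so that the chain complex splits as a tensor product of the Morse complex for $(W,\partial W)$ with the Morse complex of $\C P^N$ --- is correct, and it is exactly the parametrized Floer--Morse reduction one would expect the cited reference to use. The decoupling mechanism is the right one: because $\nabla_z H=\nabla\tilde f(z)$ is independent of $u$, the $z$-equation is autonomous gradient flow on $S^{2N+1}$, and with $J$ independent of $z$ and $\theta$ the $u$-equation becomes the ordinary Floer equation for $\tilde H$, to which the rescaling argument applies. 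The nondegeneracy of the product orbits $\{x\}\times S_k$ (kernel of the Hessian one-dimensional, spanned by the $S^1$-generator) is verified correctly via block diagonality. Two details are stated without full justification but are harmless: the explicit sign in the parametrized Robbin--Salamon index formula, which just needs to produce the degree decomposition $|S_{(x,S_k)}|=(\text{Morse index contribution})+2k$ compatible with $SH^-_*\simeq H_{*+n}(W,\partial W)$ and with the grading convention $|S_p|=-\mu(p)+N$; and the claim that transversality is available within product data, which does not actually need the generic transversality of \cite{BOtransv} but rather the elementary observation that for decoupled equations transversality is the conjunction of Morse--Smale conditions on the two factors. Overall the proof plan is sound and essentially standard.
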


\begin{remark} \label{rmk:non-exact} {\it
Let us indicate what modifications are required in the above constructions in case one works with symplectically atoroidal manifolds instead of exact ones, and one does not assume anymore that $c_1(W)$ is torsion. One starts by fixing a reference loop in each free homotopy class, as well as cylinders that connect the loops in that free homotopy class to the reference loop. The action functional is defined by integrating the symplectic form over these cylinders. One can associate a homology class $A\in H_2(\hat W;\Z)$ to each connecting Floer trajectory, and the resulting moduli spaces $\cM^A_{S^1}(S_\op,S_\up;H,J,g)$ have dimension $-\mu(\op) +\mu(\up) +2\langle
c_1(T\hat W),A\rangle$, where it is understood that the trivialization required to define $\mu(\up)$ is obtained from that used to define $\mu(\up)$ by continuation along the Floer trajectory. The Floer complex is then $\Z$-graded provided one uses coefficients in the Novikov ring $\Lambda_\omega$. 

The definition of $SH_*^{-,S^1}$ has to be modified as well. One needs to use a Hamiltonian which is a $C^2$-small function on $W$ and is $C^2$-close to an increasing function of $r$ on $[1,\infty[\times M$. Since the symplectic form is not exact one cannot guarantee \emph{a priori} that small values of the action single out the constant orbits contained in $W$. The way out is to use arguments from~\cite[p.~654]{BOcont} to prove that, for these Hamiltonians, the constant orbits form a subcomplex for \emph{geometric} reasons (arguing by contradiction, one would obtain a Floer trajectory from a constant orbit at $-\infty$ to a  nonconstant orbit at $+\infty$, and this would contradict the maximum principle). 
}
\end{remark}

\subsection{Floer homology of families}~\cite{Hutchings03,Seidel07,Seidel-Smith-localization}
\label{sec:S1-families} 
 
Let $f:\C P^N\to\R$ be a Morse function and $\tf:S^{2N+1}\to\R$ be its $S^1$-invariant lift. Fix a transverse local slice for each critical $S^1$-orbit of $\tf$. We denote by $\cH_f\subset \cH^{S^1}_N$ the subclass consisting of families $H$ which satisfy the following conditions: 
\renewcommand{\theenumi}{\roman{enumi}}
\begin{enumerate}
\item Critical points of $\cA_{H+\tf}$ lie over critical points of $\tf$;
\item $H_z$ has nondegenerate $1$-periodic orbits for every $z\in\mathrm{Crit}(\tf)$;
\item Near every critical orbit of $\tf$ we have $H(\theta,x,z)=H'(\theta-\tau_z,x)$, where $\tau_z$ is uniquely determined by the condition that $\tau_z^{-1}z$ belongs to the local slice. 
\end{enumerate}

Such Hamiltonian families are easy to construct, as shown in Example~\ref{ex:semi-explicit} below. 

\begin{remark} {\it 
Condition~(iii) is equivalent to $H$ being constant along the local slice at each critical $S^1$-orbit of $\tf$. Note that we allow the corresponding Hamiltonians on $\hat W$ to differ from one local slice to another. The degree of applicability of our construction would not have been restricted by requiring these Hamiltonians to coincide, and we will actually specialize to this situation in~\S\ref{sec:simplifying}. 

Conditions~(i-iii) imply that $H+\tf$ has nondegenerate $S^1$-orbits. As a matter of fact, we could have replaced~(iii) by this nondegeneracy assumption. This will be the relevant setup for~\S\ref{sec:S1-join}. 
}
\end{remark}

\begin{example} \label{ex:semi-explicit}
{\it
Let $\cU$ be a small $S^1$-invariant neighborhood of $\mathrm{Crit}(\tf)$. Given $H_0\in\cH$ with nondegenerate $1$-periodic orbits, we extend it to an $S^1$-invariant function $H':S^1\times\hat W\times \cU\to \R$ by the formula $H'(\theta,x,z):=H_0(\theta-\tau_z,x)$, with $\tau_z$ the unique element in $S^1$ such that $\tau_z^{-1}\cdot z$ belongs to the local slice at $S^1\cdot z$.   
We then extend $H'$ to a globally defined family $H$ with constant slope. An explicit formula for such an extension is $H(\theta,x,z):=\beta(z)H_0(\theta-\tau_z,x)+ (1-\beta(z))\rho(x)H_0(\theta,x)$, with $\beta$ an $S^1$-invariant cut-off function on $S^{2N+1}$, equal to $1$ near $\mathrm{Crit}(\tf)$ and equal to $0$ outside $\cU$, and $\rho$ a cut-off function on $\hat W$, equal to $1$ outside some large compact set and equal to $0$ in the region where $H_0$ depends on time. 

By construction, the family $H$ is constant (equal to $H_0$) along the local slices.

Up to multiplying $\tf$ by a large enough constant, the critical set of $H+\tf$ is the union of nondegenerate $S^1$-orbits $\sqcup_{S_z} S^1\times(\cP(H_0)\times\{z\})$, where $S_z$ are the critical orbits of $\tf$ and $z\in S_z$ is the intersection point with the local slice for $S_z$. Thus $H\in\cH_{Cf}$ for $C\gg 1$.}
\end{example} 

Let $J=(J^\theta_z)$ be an $S^1$-invariant family of almost complex structures in $\cJ$ such that $J^\theta_z$ is independent of $z$ along each local slice. Let $g$ be an $S^1$-invariant metric on $S^{2N+1}$ such that $\nabla \tf$ is tangent to the local slices at the critical orbits of $\tf$. Given $\op=(\og,\oz),\up=(\ug,\uz)\in\mathrm{Crit}(H+\tf)$ we denote $\hat\cM(S_\op,S_\up;H,f,J,g)$ the space of solutions $u:\R\times S^1\to\hat W$, $z:\R\to S^{2N+1}$ to the system of equations (compare~(\ref{eq:Floer1}--\ref{eq:asymptotic}))
\begin{equation} \label{eq:Floer-cont}
\left\{\begin{array}{ccl}
\p_su+ J^\theta_{z(s)}(u)(\p_\theta u-X_{H_{z(s)}}(u)) & = & 0, \\
\dot z - \vec \nabla \tf(z) & = & 0,
\end{array}\right.
\end{equation}
and 
\begin{equation} \label{eq:Floer-cont-asymptotic}
 \lim_{s\to -\infty} (u(s,\cdot),z(s)) \in S_\op, \quad
 \lim_{s\to +\infty} (u(s,\cdot),z(s)) \in S_\up.
\end{equation}
If $S_\op\neq S_\up$ we denote by $\cM(S_\op,S_\up;H,f,J,g)$ the moduli space defined as the quotient by the reparametrization $\R$-action. This moduli space carries a free $S^1$-action and the quotient is denoted $\cM_{S^1}(S_\op,S_\up;H,f,J,g)$. 

Analytically, the system~\eqref{eq:Floer-cont} is much simpler than~(\ref{eq:Floer1}--\ref{eq:Floer2}). Indeed, our assumption on the metric $g$ guarantees that, in the neighborhood of each critical orbit, every gradient trajectory of $\tf$ moves along some $S^1$-translate of the local slice. Since the family $J$ is in turn assumed to be independent of $z$ on the translates of the local slices, the system~\eqref{eq:Floer-cont} reduces to a continuation equation 
\begin{equation} \label{eq:simple-conti}
\p_s u+ J^{\theta,s}_w(u)(\p_\theta u-X_{H^{\theta,s}_w}(u))=0.
\end{equation}
Here $J^{\theta,s}_w$ is a family of almost complex structures that depends on $(\theta,s)$ and on a parameter $w\in\cM(S_\oz,S_\uz;\tf,g)$, the moduli space of gradient trajectories for $\tf$ running from $S_\oz$ to $S_\uz$. More precisely, if we identify $w$ to a gradient trajectory $z(s)$ with specified parametrization, we have $J^{\theta,s}_w:=J^\theta_{z(s)}$. Similarly, we have $H^{\theta,s}_w:=H^\theta_{z(s)}$. Once we have fixed a regular metric $g$, the known transversality arguments in Floer theory~\cite{FHS} show that a generic choice of the family of almost complex structures is regular for the continuation equations~\eqref{eq:simple-conti}. This shows that transversality can be achieved in~(\ref{eq:Floer-cont}--\ref{eq:Floer-cont-asymptotic}) for a fixed $H$ by a generic choice of the pair $(J,g)$. Our additional assumptions on the behavior of $J$ and $g$ near the critical orbits of $\tf$ do not pose any problem, since it is enough to choose the data generically outside fixed neighborhoods of these orbits. 

We now perform the same construction as in~\S\ref{sec:S1-Borel} with~(\ref{eq:Floer-cont}--\ref{eq:Floer-cont-asymptotic}) playing the role of~(\ref{eq:Floer1}--\ref{eq:asymptotic}). For the reader's convenience, we give details for the main steps. 

Given $H\in\cH_f$, we define a chain complex whose underlying $\Z$-module is   
$$
SC_*^{S^1,N}(H,f):=SC_*^{S^1,N}(H,f,J,g):=\bigoplus_{S_p\subset\cP(H+\tf)}\Z\langle S_p\rangle, 
$$ 
with grading given by $|p|:=-\mu(p)+N$ (cf.~\S\ref{sec:S1-Borel}). If $H$ is constant equal to $H_0$ along a local slice at a critical orbit of $\tf$, for $p=(\gamma,z)$ with $\gamma\in\cP(H_0)$ and $z\in\mathrm{Crit}(\tf)$ we have $|p|=-CZ(\gamma)+\ind(z;-\tf)=|\gamma|+\ind(z;-\tf)$. (Here $CZ(\gamma)$ denotes the Conley-Zehnder index of $\gamma$ as a $1$-periodic orbit of $H_0$.)

Let us now assume that $(J,g)$ are chosen generically as above. The moduli spaces $\cM_{S^1}(S_\op,S_\up;H,f,J,g)$ are smooth manifolds of dimension $-\mu(\op)+\mu(\up)-1$ and they carry coherent orientations. As such, every element $[u]$ in such a $0$-dimensional moduli space gets a sign $\eps([u])$, obtained by comparing the coherent orientation with the one determined by the infinitesimal generator of the $S^1$-action. This determines a differential $\p^{S^1}:SC_*^{S^1,N}(H,f)\to SC_{*-1}^{S^1,N}(H,f)$ by the formula 
$$
\partial^{S^1}(S_\op):=\sum_{\substack{
  S_\up\subset \cP(H+\tf) \\
|S_\op| - |S_\up|=1}}
\ \sum_{\scriptstyle [u]\in \cM_{S^1}(S_\op,S_\up;H,f,J,g)}
\epsilon([u])S_\up,
$$
and the usual gluing compactness and gluing argument transposed to this $S^1$-equivariant setting shows that $\p^{S^1}\circ \p^{S^1}=0$. We denote the resulting homology groups $SH_*^{S^1,N}(H,f,J,g)$. The same limiting procedure as in the previous section gives rise to a corresponding version of $S^1$-equivariant symplectic homology groups. In view of the next proposition, these are isomorphic to the ones in~\S\ref{sec:S1-Borel}. 

To formulate the next result, we recall the notation $SH_*^{S^1,N}(H,J,g)$ from~\S\ref{sec:S1-Borel}.

\begin{proposition} \label{prop:the-same}
For any nondegenerate $K\in\cH^{S^1}_N$ that coincides with $H+\tf$ outside a compact set and for any pair $(J',g')\in\cJ^{S^1}_N$ which is regular for $K$ in the sense of~\S\ref{sec:S1-Borel} and  coincides with $(J,g)$ outside a compact set, there is a canonical isomorphism 
$$
SH_*^{S^1,N}(K,J',g')\simeq SH_*^{S^1,N}(H,f,J,g).
$$ 
These isomorphisms commute with continuation maps as $H\to\infty$ and $N\to\infty$. 
\end{proposition}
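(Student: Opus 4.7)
The plan is to recognize the families construction of \S\ref{sec:S1-families} as a Borel-type construction in the sense of \S\ref{sec:S1-Borel} for the specific Hamiltonian $K_0:=H+\tf$, and then to invoke the continuation invariance of the Borel construction to extend the comparison to arbitrary $K$ and $(J',g')$. First I would verify that the two chain complexes match for $K_0=H+\tf$ at the level of generators. By condition (iii), in a neighborhood of each critical orbit of $\tf$ the family $H$ depends on $z$ only through the shift $\tau_z$, so $\int_{S^1}H(\theta,x,z)\,d\theta$ is locally independent of $z$ and hence $\int_{S^1}\vec\nabla_z H\,d\theta \equiv 0$ on such neighborhoods. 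Consequently the Borel critical-point equation~\eqref{eq:periodicpar} for $K_0$ reduces to $\gamma\in\cP(H_z)$ together with $z\in\mathrm{Crit}(\tf)$, which matches exactly the generators used in \S\ref{sec:S1-families}. Nondegeneracy follows from (ii) and the Morse property of $\tf$, while the standard direct-sum decomposition of the Robbin-Salamon index along the $\hat W$ and $T^*S^{2N+1}$ factors gives $-\mu(p)+N=-CZ(\gamma)+\ind(z;-\tf)$, so the gradings agree.

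Next I would compare the two differentials. The Borel equations~(\ref{eq:Floer1}--\ref{eq:Floer2}) for $K_0$ and the families equations~\eqref{eq:Floer-cont} differ only by the term $\int_{S^1}\vec\nabla_z H\,d\theta$ in the $z$-equation, so I would scale this term by a parameter $\lambda\in[0,1]$ and use the resulting homotopy of flow equations as a chain-level continuation. By the previous step the perturbation vanishes on neighborhoods of all critical orbits of $\tf$, so the asymptotic operators, indices and Fredholm data remain $\lambda$-independent. A parametrized moduli-space argument, together with a generic choice of the slice-preserving pair $(J,g)$ (transversality for the families system going through as in~\cite{FHS} and extending via the methods of~\cite{BOtransv} to the interpolation), then yields a canonical chain homotopy equivalence between $SC_*^{S^1,N}(K_0,J,g)_{\text{Borel}}$ and $SC_*^{S^1,N}(H,f,J,g)$. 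Finally, the continuation invariance of the Borel construction already used in \S\ref{sec:S1-Borel} handles any compactly supported change of data from $(K_0,J,g)$ to $(K,J',g')$, and composing yields the desired isomorphism. Compatibility with continuation as $H\to\infty$ and $N\to\infty$ will be automatic since every isomorphism in sight is realized by a continuation of admissible data.

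The hard part will be the analytic content of the second step, namely ensuring transversality and compactness uniformly in $\lambda\in[0,1]$. Compactness should be immediate because $S^{2N+1}$ is compact and the maximum principle in the $\hat W$-direction depends only on the common slope of the Hamiltonians, which is $\lambda$-independent. Transversality requires a generic choice of the slice-preserving data within the admissible class; this is exactly what the results of~\cite{BOtransv} deliver for the Borel setup and what the continuation equation~\eqref{eq:simple-conti} delivers for the families setup, so transversality for the interpolating system should follow by combining the two.
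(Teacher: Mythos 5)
Your plan --- identify the two complexes at the reference Hamiltonian $K_0=H+\tf$, interpolate the $z$-equation from the Borel to the families flavor, then treat an arbitrary $K$ by a Borel continuation --- is essentially the same route the paper takes, except that the paper performs all three deformations (of the Hamiltonian, of the pair $(J,g)$, and of the form of the $z$-equation) in a single $s$-dependent continuation governed by a cutoff $\eta(s)$, without ever needing to check that generators or indices match at an intermediate stage. However, the justification you give for matching the generators contains a genuine error. You argue that since $\int_{S^1}H(\theta,x,z)\,d\theta$ is locally independent of $z$ near critical orbits of $\tf$, it follows that $\int_{S^1}\vec\nabla_z H\,d\theta\equiv 0$ there. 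The first statement is correct \emph{for a fixed $x\in\hat W$}, because condition~(iii) gives $H(\theta,x,z)=H'(\theta-\tau_z,x)$ and the $\theta$-integral is translation-invariant; but the term in~\eqref{eq:Floer2} is evaluated along the loop $u(s,\cdot)$, and this breaks the argument. Writing $X_z$ for the infinitesimal generator of the $S^1$-action on $S^{2N+1}$, the $S^1$-invariance of $H$ gives for the component of the integral tangent to the $S^1$-orbit
$$
\int_{S^1}\bigl\langle\vec\nabla_z H(\theta,u(s,\theta),z),X_z\bigr\rangle\,d\theta
= -\int_{S^1}\partial_\theta H_z(\theta,u(s,\theta))\,d\theta
= \int_{S^1}\hat\omega\bigl(X_{H_z}(u(s,\theta)),\partial_\theta u(s,\theta)\bigr)\,d\theta,
$$
which vanishes when $u(s,\cdot)\in\cP(H_z)$ but is generically nonzero at an interior point of a Floer trajectory. (The component normal to the orbit does vanish pointwise near $\mathrm{Crit}(\tf)$, by condition~(iii).) So the perturbation term does not vanish on a neighborhood of $\mathrm{Crit}(\tf)$, and the assertion that ``the asymptotic operators, indices and Fredholm data remain $\lambda$-independent'' does not follow from your argument.

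The proposition survives because the two facts you actually need are true for other reasons: that critical points of $\cA_{H+\tf}$ lie over $\mathrm{Crit}(\tf)$ is precisely condition~(i) of $\cH_f$ --- an \emph{assumption}, not a consequence of~(iii) (the argument above shows only that $z\in\mathrm{Crit}(\tf)$ and $\gamma\in\cP(H_z)$ together force $(\gamma,z)\in\mathrm{Crit}(\cA_{H+\tf})$, not the converse); and the gradings $|p|=-\mu(p)+N$ agree automatically because $\mu(p)$ is the parametrized Robbin--Salamon index, an invariant of the critical point and of $d^2\cA_{H+\tf}$ that does not refer to the flow equation one couples to it. Once you replace your constant $\lambda$ by an $s$-dependent cutoff so that the deformed system has honest $\pm\infty$ asymptotics (this is exactly the role of $\eta(s)$ in the interpolating system~\eqref{eq:deformation} of the paper's proof), your argument and the paper's coincide.
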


\begin{proof} 
Let $\eta:\R\to [0,1]$ be a function such that $\eta(s)= 1$ for $s\le 0$ and $\eta(s)= 0$ near $+\infty$. Let $K_s$, $s\in \R$ be an $S^1$-invariant homotopy such that $K_s=K$ for $s\ll 0$ and $K_s=H+\tf$ for $s\ge 0$. Let $(J_s,g_s)$ be an $S^1$-invariant homotopy equal to $(J',g')$ for $s\ll 0$ and equal to $(J,g)$ for $s\gg 0$. Assume also that the homotopies are constant outside a compact set, and that they are generic with respect to this condition. The count modulo $S^1$ of solutions to the equation 
\begin{equation} \label{eq:deformation}
\left\{\begin{array}{ccl}
\p_su+ J^\theta_{s,z(s)}(u)(\p_\theta u-X_{K_{s,z(s)}}(u)) & = & 0, \\
\dot z - \eta(s) \int_{S^1}\vec \nabla K_s(\theta,u(s,\theta),z(s))\,d\theta - (1-\eta(s))\vec \nabla \tf(z(s)) & = & 0
\end{array}\right.
\end{equation}
defines a chain map 
$$
SC_*^{S^1,N}(K,J',g')\to  SC_*^{S^1,N}(H,f,J,g).
$$
By reversing the direction of the homotopies and replacing $\eta$ with $1-\eta$ we obtain a chain map in the opposite direction. A standard continuation argument shows that the two maps are homotopy inverses of each other, hence they induce isomorphisms in homology. The fact that we work on a noncompact manifold 
plays no role since the homotopies are fixed outside a compact set, and so is the first equation in~\eqref{eq:deformation}. 

That the resulting isomorphism commutes with continuation maps is a consequence of the standard ``homotopy of homotopies'' argument in Floer homology. 
\end{proof} 

One drawback of the construction is that, unlike in~\S\ref{sec:S1-Borel}, the parametrized action $\cA_{H+\tf}$ does not decrease along solutions of~\eqref{eq:Floer-cont}. This can be annoying when it comes to define the $\pm$-versions of $S^1$-equivariant symplectic homology, for which we previously used the action filtration. The way out is to consider the kind of Hamiltonians already mentioned in Remark~\ref{rmk:non-exact}. More precisely, one requires $H$ to be $C^2$-small and independent of $\theta$ and $z$ inside $W$. This condition determines a subcomplex of a geometric nature, generated by critical orbits of the form $S^1\cdot(\gamma,z)$, with $\gamma$ a constant orbit at a critical point of $H$ in $W$. The arguments of Proposition~\ref{prop:the-same} extend to prove that the resulting versions of $SH_*^{\pm,S^1}(W)$ coincide with the ones of~\S\ref{sec:S1-Borel}.

\subsection{Simplifying the complex}~\cite{Seidel07} \label{sec:simplifying}

We consider in this section a particular case of the construction in~\S\ref{sec:S1-families} for which the complex $SC_*^{S^1,N}(H,f,J,g)$ takes a very simple form.  We consider a sequence $(f_N,\bar g_N)$, $N\ge 1$ such that $f_N:\C P^N\to \R$ is a perfect Morse function and $\bar g_N$ is a Riemannian metric for which the gradient flow of $f_N$ has the Morse-Smale property. Denote $i_0:\C P^N\hookrightarrow \C P^{N+1}$, $[z_0:\dots:z_{N-1}]\mapsto [z_0:\dots:z_{N-1}:0]$ and $i_1:\C P^N\hookrightarrow \C P^{N+1}$, $[z_0:\dots:z_{N-1}]\mapsto [0:z_0:\dots:z_{N-1}]$. We assume that the following ``periodicity'' conditions hold for all $N\ge 1$: 

\renewcommand{\theenumi}{\roman{enumi}}
\begin{enumerate}
\item \label{item:1} $\mathrm{im}(i_0)$ and $\mathrm{im}(i_1)$ are invariant under the gradient flow of $f_{N+1}$; 
\item \label{item:2} $f_N=f_{N+1}\circ i_0= f_{N+1}\circ i_1 + ct.$, and $i_1^*\bar g_{N+1}=i_0^*\bar g_{N+1}=\bar g_N$. 
\end{enumerate}

We infer that the gradient flow of $f_{N+1}$ on $\mathrm{im}(i_0)$ coincides with the flow on $\mathrm{im}(i_1)$. 
As an example take $f_N([z_0:\dots:z_N]):=C\sum_{j=0}^N(j+1)|z_j|^2/\sum_{j=0}^N|z_j|^2$ for some constant $C>0$, and $\bar g_N$ the metric induced by the round metric on $S^{2N+1}$. 

We denote by $\tf_N$ the $S^1$-invariant lift of $f_N$ to $S^{2N+1}$, and by $\tilde i_0,\tilde i_1:S^{2N+1}\to S^{2N+3}$ the maps $z\mapsto(z,0)$, resp. $z\mapsto(0,z)$, which are lifts of $i_0$ and $i_1$. 

Let $H\in\cH$ be fixed with nondegenerate $1$-periodic orbits. We consider a sequence $H_N\in\cH^{S^1}_N$, $N\ge 1$ as in~\S\ref{sec:S1-families} such that $H_N(\theta,x,z)=H(\theta-\tau_z,x)$ for every $z\in\mathrm{Crit}(\tf_N)$, a sequence $J_N\in\cJ^{S^1}_N$ such that $J_N$ is regular for $H_N$ and~\eqref{eq:Floer-cont}, and assume the following ``periodicity'' conditions to hold for all $N\ge 1$ and $z\in S^{2N+1}$: 
\renewcommand{\theenumi}{\roman{enumi}}
\begin{enumerate}
\setcounter{enumi}{2}
\item \label{item:3} $H_{N+1}(\cdot,\cdot,\tilde i_1(z))=H_{N+1}(\cdot,\cdot,\tilde i_0(z))=H_N(\cdot,\cdot,z)$;
\item \label{item:4} $J_{N+1,\tilde i_1(z)}=J_{N+1,\tilde i_0(z)}=J_{N,z}$.
\end{enumerate}

In this situation we have a natural identification 
$$
SC_*^{S^1,N}(H_N,f_N,J_N,g_N)\simeq \Z[u]/(u^{N+1}) \otimes_\Z SC_*(H,J), 
$$
where $SC_*(H,J)$ is the Floer homology complex of $H$ and $u$ is a formal variable of degree $2$. Indeed, critical points of $H_N+\tf_N$ are of the form $\sqcup_{j=0}^N S^1\cdot(\cP(H)\times \{Z_j\})$, with $Z_j\in\mathrm{Crit}(\tf_N)$ a critical point of index $2j$. Such a point is unique modulo the $S^1$-action since $f_N$ is assumed to be perfect, and the above identification is  given by $S^1\cdot(\gamma,Z_j)\mapsto u^j \otimes\gamma=:u^j\gamma$. For readability we write in the sequel $SC_*(H)$ instead of $SC_*(H,J)$.

Using this identification, it follows from our ``periodicity'' assumptions~(\ref{item:1}-\ref{item:4}) that the differential $\p^{S^1}$ on $SC_*^{S^1,N}(H_N,f_N,J_N,g_N)$ is determined by the maps  
$$
\varphi_j:SC_*(H)\to SC_{*+2j-1}(H), \qquad j=0,\dots,N
$$ 
defined by counting solutions of~(\ref{eq:Floer-cont}--\ref{eq:Floer-cont-asymptotic}) that run from $S^1\cdot(\og,Z_j)$ to $S^1\cdot(\ug,Z_0)$. More precisely, we have 
$$
\varphi_j(\og)=\sum_{\substack{
  \ug\in\cP(H) \\
|\ug|=|\og|+2j-1}}
\ \sum_{\scriptstyle [u]\in \cM_{S^1}(S_{(\og,Z_j)},S_{(\ug,Z_0)};H_N,f_N,J_N,g_N)}
\epsilon([u])\ug,
$$
and
\begin{equation}\label{eq:S1-diff-u}
\p^{S^1}(u^\ell\gamma)=\sum_{j=0}^\ell u^{\ell-j}\varphi_j(\gamma). 
\end{equation}

It also follows from our assumptions~(\ref{item:1}-\ref{item:4}) that, for a fixed $j$, the maps $\varphi_j$ obtained for varying values of $N\ge j$ coincide. As such, we can encode the limit $N\to\infty$ into a complex denoted
$$
SC_*^{S^1}(H):=\Z[u] \otimes_\Z SC_*(H),\qquad |u|=2, 
$$ 
with differential given by~\eqref{eq:S1-diff-u}. Formally we can write (compare~\cite[(8.6)]{Seidel07})
$$
\p^{S^1}=d + u^{-1}\varphi_1 + u^{-2}\varphi_2 + \dots 
$$
and $d$ is the usual Floer differential on $SC_*(H)$. As for the classical case of symplectic homology, there are well-defined continuation maps induced by increasing homotopies of Hamiltonians in $\cH$, and we have 
$$
SH^{S^1}_*(W):=\lim_{\stackrel \longrightarrow
  {H\in\cH}}SH^{S^1}_*(H).
$$

The $\pm$ versions are defined as in the previous sections, based on the $\pm$ versions of $SC_*^{S^1}(H)$ for a Hamiltonian $H\in\cH$ that is a $C^2$-small Morse function on $W$.

\subsection{Algebraic $S^1$-actions} \label{sec:algebraicS1}

At this point it is useful to bring to the fore the algebraic structure that appeared in the previous section.

\begin{definition}
Let $C=(C_\cdot,\p)$ be a $\Z$-graded homological chain complex. An \emph{$S^1$-structure on} $(C_\cdot,\p)$ is the datum of a sequence of maps 
$$
\varphi=(\varphi_0=\p,\varphi_1,\varphi_2,\dots)
$$ 
such that $\varphi_i$ has degree $2i-1$ and we have the relations 
\begin{equation} \label{eq:fi}
\forall k\ge 0,\ \sum_{i+j=k} \varphi_i\varphi_j=0.
\end{equation}
We refer to the pair $(C_\cdot,\varphi)$ as \emph{a $S^1$-complex}, or as \emph{a multicomplex}.
\end{definition}

\begin{remark}{\it 
The notion of a multicomplex already appears in somewhat greater generality in the work of Meyer~\cite{Meyer}. Lapin~\cite{Lapin} and Dotsenko, Shadrin, Vallette~\cite{DSV} use the definition above and study these algebraic objects from the perspective of the Homological Perturbation Lemma. We propose the alternative name of $S^1$-complex in order to stress the geometric origin of this algebraic structure in our context. We view an $S^1$-structure as being extra data attached to a chain complex, much in the same spirit as an $S^1$-action is extra data on a topological space.  
}
\end{remark}

\begin{remark} \label{rmk:mixed-complex} {\it 
A remarkable instance of $S^1$-complex is that in which $\varphi_j=0$ for $j\ge 2$. The datum of a triple $(C_\cdot,b,B)$ such that $b$ has degree $-1$, $B$ has degree $1$, $b^2=0$, $B^2=0$ and $bB+Bb=0$ is now called a \emph{mixed complex} in the literature on cyclic homology (see~\cite[\S2.5.13]{Loday} and the references therein, most notably Kassel~\cite{Kassel-1987} and Burghelea~\cite{Burghelea-1986}, the latter having introduced them under the name of \emph{algebraic $S^1$-chain complexes}). From this point of view, an $S^1$-structure or multicomplex should be understood as, and could be alternatively called, a \emph{mixed complex up to homotopy}, or \emph{$\infty$-mixed complex}. This point of view can be made precise via the Homological Perturbation Lemma for multicomplexes, proved in~\cite{Lapin,Loday-Vallette-book,DSV}. 
}
\end{remark}

The map $\varphi_1$ is the most important in the sequence $(\varphi_1,\varphi_2,\dots)$. Equation~\eqref{eq:fi} shows that $\varphi_1$ is a chain map, whereas the maps $\varphi_i$, $i\ge 2$ can be interpreted as secondary operations, as is customary in the study of algebraic structures up to homotopy. This motivates the following definition. 

\begin{definition} \label{defi:BV} Given an $S^1$-complex $(C_\cdot,\varphi)$ with $\varphi=(\p,\varphi_1,\varphi_2,\dots)$ we call $\varphi_1$ \emph{the Batalin-Vilkovisky operator} or, for short, \emph{the BV-operator}. Traditionally we denote $\Delta:=\varphi_1$. 
\end{definition} 

While $\Delta$ is a chain map, the case $k=2$ in equation~\eqref{eq:fi} shows that $\varphi_2$ is a chain homotopy between $\Delta^2$ and $0$. In particular we have $\Delta^2=0$ in homology. This last identity is also a consequence of the fact that, in the notation of Proposition~\ref{prop:Gysin+spectral} below, $\Delta$ induces in homology the map $B\circ I$. 

Given an $S^1$-complex $(C_\cdot,\varphi)$ we can form the complex $\tilde C=(\tilde C_\cdot,\tilde\p_\varphi)$
 with 
$$
\tilde C_. := \Z[u]\otimes_\Z C_\cdot, \qquad |u|=2
$$
and differential
$$
\tilde\p_\varphi ( u^\ell\otimes x) := \sum_{j=0}^\ell  u^{\ell-j} \otimes \varphi_j(x). 
$$
Note that $\tilde C_\cdot$ is \emph{not} a differential $\Z[u]$-module with respect to the natural multiplication by $u$. On the other hand, $\tilde C_\cdot$ is a $\Z[u^{-1}]$-module with respect to the operation 
$$
S:\tilde C_\cdot\to \tilde C_{\cdot -2},
$$  
$$
S(u^\ell\otimes x) := \left\{\begin{array}{cl}Ê0, & \ell=0,\\Êu^{\ell-1}\otimes x, &  \ell\ge 1.
\end{array}Ê\right.
$$

\begin{definition}
The $S^1$-equivariant homology of the $S^1$-complex $(C_\cdot,\varphi)$ is 
$$
H_\cdot^{S^1}(C_\cdot,\varphi):=H_\cdot(\tilde C).
$$
\end{definition}

\begin{proposition} \label{prop:Gysin+spectral}
Let $(C_\cdot,\varphi)$ be an $S^1$-complex. 
\renewcommand{\theenumi}{\roman{enumi}}
\begin{enumerate}
\item There is a Gysin exact triangle 
\begin{equation*} 
\xymatrix
@C=20pt
{
H_\cdot(C_\cdot) \ar[rr]^I & & 
H_\cdot^{S^1}(C_\cdot,\varphi) \ar[dl]^{[-2]}_S \\ & H_\cdot^{S^1}(C_\cdot,\varphi) \ar[ul]^{[+1]}_B
}
\end{equation*}
The map $I$ is induced by $x\mapsto 1\otimes x$, the map $S$ is induced by ``multiplication with $u^{-1}$" as above, whereas the map $B$ is induced by $u^\ell\otimes x\mapsto \varphi_{\ell+1}(x)$.
\item There is a spectral sequence $E^r_{p,q}$, $r\ge 2$ converging to $H_\cdot^{S^1}(C_\cdot,\varphi)$ with second page given by 
$$
E^2_{p,q}\simeq H_p(BS^1)\otimes H_q(C).
$$
\end{enumerate}
\end{proposition}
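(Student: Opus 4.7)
My plan is to realize both statements via standard homological algebra applied to $\tilde C = \Z[u]\otimes_\Z C$: part (i) will follow from a short exact sequence of complexes, and part (ii) from the $u$-degree filtration.

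For part (i), I would first verify that $S$, defined by $u^\ell\otimes x\mapsto u^{\ell-1}\otimes x$ for $\ell\ge 1$ and $1\otimes x\mapsto 0$, is a chain map of degree $-2$. This is immediate from $\tilde\partial_\varphi(u^\ell\otimes x) = \sum_{j=0}^\ell u^{\ell-j}\otimes\varphi_j(x)$: applying $S$ kills the $j=\ell$ summand and drops the $u$-power by one elsewhere, which is exactly the effect of $\tilde\partial_\varphi\circ S$. Since $\ker S = 1\otimes C = I(C)$ and $S$ is obviously surjective, I would assemble the short exact sequence of chain complexes $0\to C \xrightarrow{I} \tilde C \xrightarrow{S} \tilde C[-2] \to 0$; its long exact sequence in homology is the asserted Gysin triangle. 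To identify the connecting homomorphism with $B$, I would represent a class by a cycle $\alpha = \sum_\ell u^\ell\otimes x_\ell$, lift via $S$ to $\tilde\alpha=\sum_\ell u^{\ell+1}\otimes x_\ell$, and observe that the only surviving component of $\tilde\partial_\varphi\tilde\alpha\in\ker S = I(C)$ is $\sum_\ell 1\otimes\varphi_{\ell+1}(x_\ell)$, matching the stated formula.

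For part (ii), I would endow $\tilde C$ with the increasing filtration $F_p\tilde C = \bigoplus_{2\ell\le p} u^\ell\otimes C$. This is a filtration by subcomplexes since $\tilde\partial_\varphi$ never raises the $u$-power; it is bounded below ($F_{-1}=0$) and exhaustive, so the associated spectral sequence converges strongly to $H_\cdot(\tilde C) = H_\cdot^{S^1}(C,\varphi)$. The odd graded pieces vanish, while $F_{2m}/F_{2m-1}\simeq u^m\otimes C$ inherits the differential $\varphi_0=\partial$, yielding $E^1_{p,q} = H_q(C)$ for $p=2m\ge 0$ and zero otherwise. The differential $d^1$ vanishes because its target column is zero, so $E^2=E^1$; identifying $\Z\langle u^m\rangle$ with $H_{2m}(BS^1)$ produces the claimed $E^2$-page. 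As a side benefit, reading off $d^2$ from the filtration recovers $d^2[u^m\otimes x] = [u^{m-1}\otimes\Delta x]$, explaining the BV terminology of Definition~\ref{defi:BV}.

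I do not expect any genuine obstacle: the chain-map property of $S$ is a direct line of computation, and everything else is a textbook application of the long exact sequence of a short exact sequence of complexes and of the spectral sequence of a bounded-below exhaustive filtration.
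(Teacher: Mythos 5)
Your proof is correct and takes essentially the same route as the paper: part~(i) uses the short exact sequence $0\to C\to\tilde C\to\tilde C[-2]\to 0$ and a diagram chase to identify the connecting homomorphism with $B$, and part~(ii) uses the filtration by $u$-degree. Two small remarks. First, your filtration $F_p=\bigoplus_{2\ell\le p}u^\ell\otimes C$ is indexed more carefully than the paper's $F_p=\Z\langle 1,\dots,u^p\rangle\otimes C$; yours puts the nonzero columns in even filtration degree and thus yields the stated bidegrees $E^2_{p,q}\simeq H_p(BS^1)\otimes H_q(C)$ literally, and your observation that $d^2$ recovers the BV-operator $\Delta$ is a nice bonus. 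Second, ``bounded below and exhaustive'' does not by itself force strong convergence when the filtration is unbounded above (the incoming $d^r$-differentials need not eventually vanish); the paper makes exactly this point in the remark following the proposition, where convergence is guaranteed under an extra hypothesis such as $C_i=0$ for $i\ll 0$ or $i\gg 0$, or $C_i$ degreewise finite-dimensional. You should qualify your ``converges strongly'' accordingly, though this is harmless in the intended Floer-theoretic application where $C$ is degreewise finitely generated.
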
 

\begin{proof}
To prove (i) we observe that there is a short exact sequence of complexes
\begin{equation} \label{eq:Gysin-short-ex-seq}
0\to C_\cdot \to \tilde C_\cdot \to \tilde C[-2]_\cdot \to 0
\end{equation} 
Here we denote $\tilde C[-2]_\cdot:=\tilde C_{\cdot -2}$. The first map in the above exact sequence acts by $x\mapsto 1\otimes x$, whereas the second map is projection onto $u\Z[u]\otimes_\Z C_\cdot$, which is naturally identified with $\tilde C[-2]$. The Gysin triangle is the homology exact triangle associated to~\eqref{eq:Gysin-short-ex-seq}. The expressions for the maps $I$ and $S$ at chain level follow directly from the definitions, whereas the expression for the map $B$ at chain level follows from a straightforward diagram chasing: given a cycle $\sum_{i=0}^{\ell-1}u^{\ell-i-1}\otimes x_i\in\tilde C_{k-2}$ with $|x_i|=k-2\ell+2i$, we lift it to $\sum_{i=0}^{\ell-1}u^{\ell-i}\otimes x_i\in \tilde C_k$, and compute 
\begin{eqnarray*}
\tilde \p_\varphi(\sum_{i=0}^{\ell-1}u^{\ell-i}\otimes x_i) & = & \sum_{i=0}^{\ell-1} \sum_{j=0}^{\ell-i} u^{\ell-i-j}\otimes \varphi_j(x_i) \\
& = & u \sum_{i=0}^{\ell-1} \sum_{j=0}^{\ell-i-1} u^{\ell-i-1-j} \otimes \varphi_j(x_i) \ + \ \sum_{i=0}^{\ell-1} 1\otimes \varphi_{\ell-i}(x_i) \\
& = & u\tilde \p_\varphi( \sum_{i=0}^{\ell-1}u^{\ell-i-1}\otimes x_i) \ + \ \sum_{i=0}^{\ell-1} 1\otimes \varphi_{\ell-i}(x_i)\\
& = & 1\otimes (\sum_{i=0}^{\ell-1} \varphi_{\ell-i}(x_i)).
\end{eqnarray*}
The term $\sum_{i=0}^{\ell-1} \varphi_{\ell-i}(x_i)$ is a cycle in $C_{k-1}$. 
By definition, the boundary map $B$ in the Gysin exact triangle is induced by 
$$
\sum_{i=0}^{\ell-1}u^{\ell-i-1}\otimes x_i \mapsto \sum_{i=0}^{\ell-1} \varphi_{\ell-i}(x_i),
$$
which is in turn the same as the chain map $u^\ell\otimes x\mapsto \varphi_{\ell+1}(x)$ (note that this being a chain map is equivalent to the fact that the $\varphi_i$'s satisfy the relations~\eqref{eq:fi}).

To prove~(ii), we consider the increasing filtration on $\tilde C$ defined by $F_p\tilde C:=\Z\langle 1,u,\dots,u^p\rangle\otimes_\Z C_\cdot$. The associated spectral sequence is the desired one, taking into account that $H_\cdot(BS^1)\simeq \Z[u]$.  
\end{proof}

\begin{remark} {\it
The spectral sequence in~(ii) converges if either $C_i=0$ for $i\ll 0$ or for $i\gg 0$, or $C_i$ is finite-dimensional for each $i$. If neither of these conditions is satisfied, convergence cannot be guaranteed. 

In our intended application $C_\cdot=SC_\cdot(H)$ is finite dimensional since $H$ is assumed to be linear at infinity. Note that in both non-equivariant and equivariant symplectic homology the result of the direct limit over $H\in\cH$ is equal to the homology group obtained using a single quadratic Hamiltonian. 
}
\end{remark}

\begin{definition} \label{defi:S1chainmap}
Let $(C_\cdot,\varphi)$ and $(D_\cdot,\psi)$ be two $S^1$-complexes. An \emph{$S^1$-equivariant chain map} or, for short, \emph{chain map}, from $(C_\cdot,\varphi)$ to $(D_\cdot,\psi)$ is the datum of a sequence of maps
$$
\Phi=(\Phi_0,\Phi_1,\Phi_2,\dots)
$$
such that $\Phi_i$ has degree $2i$ and we have the relations
$$
\forall k\ge 0, \ \sum_{i+j=k} \Phi_i \varphi_j - \psi_j\Phi_i=0.
$$
Given two chain maps $\Phi$ and $\Psi$, an \emph{$S^1$-equivariant homotopy} or, for short, \emph{homotopy}, from $\Phi$ to $\Psi$ is a sequence of maps 
$$
h=(h_0,h_1,h_2,\dots)
$$
such that $h_i$ has degree $2i+1$ and we have the relations 
$$
\forall k\ge 0,\ \Phi_k-\Psi_k=\sum_{i+j=k} h_i\varphi_j + \psi_jh_i. 
$$
\end{definition}

An $S^1$-equivariant chain map $\Phi$ defines a chain map $\tilde \Phi:\tilde C\to \tilde D$ via $u^\ell\otimes x\mapsto \sum_{i=0}^\ell u^{\ell-i}\otimes \Phi_i(x)$. Similarly, an $S^1$-equivariant homotopy $h$ from $\Phi$ to $\Psi$ defines a homotopy from $\tilde \Phi$ to $\tilde \Psi$ via $u^\ell\otimes x\mapsto \sum_{i=0}^\ell u^{\ell-i}\otimes h_i(x)$.

\begin{definition}
Let $(C_\cdot,\varphi)$ be an $S^1$-complex. An \emph{$S^1$-subcomplex} is a submodule $C_\cdot^-\subset C_\cdot$ which is invariant under the maps $\varphi_i$, $i\ge 0$. 
\end{definition}

Given an $S^1$-subcomplex $C_\cdot^-$, the quotient complex $C_\cdot^+:=C_\cdot/C_\cdot^-$ inherits an $S^1$-structure $\bar\varphi=(\bar\varphi_0,\bar\varphi_1,\dots)$ defined by $\bar\varphi_i([x]):=[\varphi_i(x)]$. Here we denoted by $[x]$ the class of an element $x\in C_\cdot$. The maps $\bar\varphi_i$ are well-defined because $C_\cdot^-$ is assumed to be invariant under the maps $\varphi_i$. 

The following result is an algebraic version of Theorem~\ref{thm:Gysin-spec}.

\begin{proposition} \label{prop:Gysin-spec} 
Let $(C_\cdot,\varphi)$ be an $S^1$-complex and $C_\cdot^-\subset C_\cdot$ an $S^1$-subcomplex. Denote $C_\cdot^+$ the quotient $S^1$-complex. The Gysin exact triangles and spectral sequences in Proposition~\ref{prop:Gysin+spectral} for $C_\cdot$ and $C_\cdot^\pm$ are compatible with the homology exact triangle associated to the exact sequence $0\to C_\cdot^-\to C_\cdot\to C_\cdot^+\to 0$.
\end{proposition}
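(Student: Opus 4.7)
The strategy is to observe that the short exact sequence $0 \to C_\cdot^- \to C_\cdot \to C_\cdot^+ \to 0$ of $S^1$-complexes fits into a commutative $3 \times 3$ diagram of short exact sequences, whose rows are the defining sequences $0 \to D \to \tilde D \to \tilde D[-2] \to 0$ of Proposition~\ref{prop:Gysin+spectral}(i) applied to $D = C^-, C, C^+$, and whose columns are obtained by tensoring the given short exact sequence with $\Z[u]$. The key point is that $\tilde C^\pm_\cdot \cong \Z[u] \otimes_\Z C_\cdot^\pm$ as graded modules, that $\tilde C^-_\cdot$ is a subcomplex of $\tilde C_\cdot$ with respect to $\tilde\p_\varphi$ (because $C^-$ is invariant under all $\varphi_i$), and that the induced differential on the quotient $\tilde C_\cdot/\tilde C^-_\cdot \cong \tilde C^+_\cdot$ coincides with $\tilde\p_{\bar\varphi}$ (because $\bar\varphi_i([x]) := [\varphi_i(x)]$).

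First I would check that the columns are genuinely short exact sequences of chain complexes: exactness as sequences of graded modules is immediate from $\Z[u] \otimes -$ preserving exactness, and compatibility with differentials follows from the invariance and well-definedness statements above. Similarly, the horizontal maps $D \hookrightarrow \tilde D$ and $\tilde D \twoheadrightarrow \tilde D[-2]$ commute with the inclusion $C^- \to C$ and projection $C \to C^+$ because they are built from tensoring with $\Z[u]$. Once this commutative $3\times 3$ diagram is in place, the compatibility of the Gysin triangles with the homology long exact sequence of $0 \to C^-_\cdot \to C_\cdot \to C^+_\cdot \to 0$ is exactly the naturality of the connecting homomorphism for a morphism of short exact sequences of complexes, applied in both directions. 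This naturality also identifies the maps $I$, $S$ at chain level, and a direct computation (as at the end of the proof of Proposition~\ref{prop:Gysin+spectral}(i)) identifies the map $B$ with the one induced by $\bar\varphi$ on $C^+$.

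For part~(ii), I would use that the increasing filtration $F_p \tilde C_\cdot := \Z\langle 1, u, \dots, u^p\rangle \otimes_\Z C_\cdot$ restricts to the analogous filtration on $\tilde C^-_\cdot$ and descends to the one on $\tilde C^+_\cdot$, so that the inclusion $\tilde C^-_\cdot \hookrightarrow \tilde C_\cdot$ and projection $\tilde C_\cdot \twoheadrightarrow \tilde C^+_\cdot$ are filtered maps. This yields a short exact sequence of filtered complexes and hence a morphism of spectral sequences, compatible on the $E^2$-page with the functoriality of $H_p(BS^1) \otimes -$ applied to the long exact sequence in homology, and converging to the long exact sequence in $S^1$-equivariant homology constructed in part~(i).

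There is no real obstacle in the proof; the only point requiring care is to verify that all of the algebraic data (the subcomplex structure, the differentials $\tilde\p_\varphi$ and $\tilde\p_{\bar\varphi}$, the filtration by powers of $u$, and the maps $I, S, B$) respect the inclusion $C^-_\cdot \hookrightarrow C_\cdot$ and the quotient projection $C_\cdot \twoheadrightarrow C^+_\cdot$. Once the $3\times 3$ diagram is set up, both statements reduce to standard naturality properties of connecting homomorphisms and of the spectral sequence of a filtered complex.
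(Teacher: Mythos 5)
Your setup is the same as the paper's: you build the $3\times 3$ commutative diagram of short exact sequences (the paper's diagram~\eqref{eq:exact}) with $(X,X',X'')=(C^-,C,C^+)$, $(Y,Y',Y'')=(\tilde C^-,\tilde C,\tilde C^+)$, $(Z,Z',Z'')=(\tilde C^-[-2],\tilde C[-2],\tilde C^+[-2])$, and the spectral-sequence half of your argument matches the paper's and is fine. The gap is in the claim that ``the compatibility of the Gysin triangles with the homology long exact sequence \ldots is exactly the naturality of the connecting homomorphism \ldots applied in both directions.'' Naturality of the connecting homomorphism (applied row-wise and column-wise) only gives you the squares of the grid~\eqref{eq:big-grid} that involve \emph{one} connecting map together with a map induced by one of $u,v,f,g,\ldots$ at chain level. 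It says nothing about the one remaining square, namely~\eqref{eq:square-diag},
\[
\xymatrix
@C=20pt
@R=20pt
{
H_\cdot(Z'')\ar[r]^-{\p''} \ar[d]_{\p_Z} & H_{\cdot-1}(X'') \ar[d]^{\p_X} \\
H_{\cdot-1}(Z) \ar[r]^-{\p\ }& H_{\cdot-2}(X),
}
\]
which involves the composition of \emph{two} connecting homomorphisms: one coming from the Gysin short exact sequence and one from the tautological short exact sequence. This square does not commute; it \emph{anti}-commutes, $\p_X\p''+\p\p_Z=0$, and establishing this is a genuinely separate step. The paper isolates it as Lemma~\ref{lem:grid} and proves it by an explicit diagram chase (also giving a cone-theoretic explanation: the middle column is the cone of a chain map and the sign change is forced by $\p_{[-1]}=-\p$). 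Without this lemma your argument does not actually establish the compatibility of the three Gysin triangles with the long exact sequence of $0\to C^-\to C\to C^+\to 0$: you would be asserting commutativity of a square that in fact anti-commutes, and the remedy is not a bookkeeping fix obtainable from naturality alone. You should either prove Lemma~\ref{lem:grid} directly, or cite the standard fact about connecting maps in a $3\times 3$ grid of short exact sequences of complexes.
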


\begin{proof}
The starting point is the induced short exact sequence of complexes $0\to \tilde C_\cdot^-\to \tilde C_\cdot\to \tilde C_\cdot^+\to 0$. 
The assertion concerning the spectral sequences is straightforward, since the respective filtrations fit into exact sequences $0\to F_p\tilde C_\cdot^-\to F_p\tilde C_\cdot\to F_p\tilde C_\cdot^+\to 0$ for all $p\ge 0$. 

The assertion concerning the Gysin exact triangles follows from Lemma~\ref{lem:grid} below, applied with 
$(X,X',X'')=(C^-,C,C^+)$, $(Y,Y',Y'')=(\tilde C^-,\tilde C,\tilde C^+)$, and $(Z,Z',Z'')=(\tilde C^-[-2],\tilde C[-2],\tilde C^+[-2])$. 
\end{proof}

\begin{lemma} \label{lem:grid}
Consider the following commutative diagram whose lines and co\--lumns are short exact sequences of complexes
\begin{equation} \label{eq:exact}
\xymatrix{
& 0 \ar[d] & 0 \ar[d] & 0 \ar[d] &  \\ 
0\ar[r] & X \ar[r]^u \ar[d]^f & Y \ar[r]^v \ar[d]^g & Z \ar[r]
\ar[d]^h & 0 \\
0\ar[r] & X' \ar[r]^{u'}  \ar[d]^{f'} & Y' \ar[r]^{v'} \ar[d]^{g'} & Z' \ar[r] \ar[d]^{h'}
 & 0 \\
0\ar[r] & X'' \ar[r]^{u''} \ar[d] & Y'' \ar[r]^{v''} \ar[d] & Z'' \ar[r] \ar[d]
 & 0  \\
& 0 & 0 & 0 &
}
\end{equation}
and consider the diagram formed by the homological long exact sequences 
\begin{equation} \label{eq:big-grid}
\xymatrix
@C=20pt
@R=20pt
{
 & \vdots \ar[d] & \vdots \ar[d] & \vdots \ar[d] & \vdots \ar[d] & \\
\cdots \ar[r] & H_\cdot(X) \ar[r] \ar[d] & H_\cdot(Y) \ar[r] \ar[d]
& H_\cdot(Z) \ar[r]^-\p \ar[d] & H_{\cdot-1}(X) \ar[r] \ar[d] & \cdots \\
\cdots \ar[r] & H_\cdot(X') \ar[r] \ar[d] &
H_\cdot(Y') \ar[r] \ar[d] & H_\cdot(Z')
\ar[r]^-{\p'} \ar[d] & H_{\cdot-1}(X') \ar[r] \ar[d] & \cdots \\
\cdots \ar[r] & H_\cdot(X'') \ar[r] \ar[d]_-{\p_X} & H_\cdot(Y'') \ar[r]
\ar[d]_-{\p_Y} & H_\cdot(Z'') \ar[r]^-{\p''} \ar[d]_-{\p_Z} & H_{\cdot-1}(X'') \ar[r]
\ar[d]^-{\p_X} & \cdots \\
\cdots \ar[r] & H_{\cdot-1}(X) \ar[r] \ar[d] & H_{\cdot-1}(Y) \ar[r] \ar[d]
& H_{\cdot-1}(Z) \ar[r]^-\p \ar[d] & H_{\cdot-2}(X) \ar[r] \ar[d] & \cdots \\
& \vdots & \vdots & \vdots & \vdots &
}
\end{equation}
All the squares in this diagram are commutative with the exception of the bottom right square
\begin{equation} \label{eq:square-diag}
\xymatrix
@C=20pt
@R=20pt
{
H_\cdot(Z'')\ar[r]^-{\p''} \ar[d]_{\p_Z} & H_{\cdot-1}(X'') \ar[d]^{\p_X} \\
H_{\cdot-1}(Z) \ar[r]^-{\p\ }Ê& H_{\cdot-2}(X)
}
\end{equation}
which is anti-commutative, meaning that it satisfies the relation
\begin{equation} \label{eq:square}
\p_X\p''+\p\p_Z=0.
\end{equation}
\end{lemma}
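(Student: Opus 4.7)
My plan is to separate the commutative squares of~\eqref{eq:big-grid} from the anti-commutative one~\eqref{eq:square-diag}, and to establish~\eqref{eq:square} by a simultaneous diagram chase. The squares of~\eqref{eq:big-grid} that contain no connecting homomorphism commute by direct functoriality of homology applied to the commutative squares in~\eqref{eq:exact}. For those containing exactly one connecting map, the triples $(f,g,h)$ and $(f',g',h')$ define chain maps between the rows of~\eqref{eq:exact}, and $(u,u',u'')$, $(v,v',v'')$ define chain maps between its columns, so the standard naturality of the connecting homomorphism applied to these chain maps of short exact sequences yields commutativity. This leaves only the square~\eqref{eq:square-diag}, in which each composition concatenates two connecting maps.

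For the anti-commutativity I will route both compositions through a single well-chosen element of $Y'$. Given a cycle $z''\in Z''_n$, the map $v''\circ g'=h'\circ v':Y'_n\to Z''_n$ (the equality coming from the middle-right square of~\eqref{eq:exact}) is surjective, so I can pick $y'\in Y'_n$ with $v''(g'(y'))=h'(v'(y'))=z''$. Setting $y'':=g'(y')$ and $z':=v'(y')$, the relations $\p y''=u''(x'')$ and $\p z'=h(z)$ provide cycles $x''\in X''_{n-1}$ and $z\in Z_{n-1}$ that represent $\p''[z'']$ and $\p_Z[z'']$ respectively.

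The main step, which I expect to be the principal obstacle, is to align the next two lifts compatibly, so that they both arise from $\p y'$. For any lift $y\in Y_{n-1}$ of $z$ via $v$, the element $\p y'-g(y)$ lies in $\ker v'=\im u'$ because $v'(\p y'-g(y))=\p z'-h(z)=0$, hence there exists $x'\in X'_{n-1}$ with $u'(x')=\p y'-g(y)$. A short check using $u''\circ f'=g'\circ u'$, $g'\circ g=0$ and the injectivity of $u''$ shows $f'(x')=x''$, so this $x'$ is a valid lift of $x''$. Define $a,b\in X_{n-2}$ by $f(a)=\p x'$ and $u(b)=\p y$, so that $\p_X\p''[z'']=[a]$ and $\p\p_Z[z'']=[b]$. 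The identity $\p^2 y'=0$ combined with $\p y'=u'(x')+g(y)$ and the square $g\circ u=u'\circ f$ then yields
\[
0=u'(\p x')+g(\p y)=u'(f(a))+u'(f(b))=u'(f(a+b)),
\]
and the injectivity of $u'$ and $f$ forces $a+b=0$. Hence $\p_X\p''+\p\p_Z=0$, which is~\eqref{eq:square}.
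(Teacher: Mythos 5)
Your proof is correct and follows essentially the same elementary diagram chase as the paper: route both compositions through a common lift $y'\in Y'$ and extract the anti-commutativity from $\p^2 y'=0$. The only difference is cosmetic — you choose $x'$ with $u'(x')=\p y'-g(y)$ so that $x'$ is a genuine lift of $x''$, whereas the paper writes $\p y' = g(y) - u'(x')$ and carries $x'$ as a lift of $-x''$; your sign convention is arguably the cleaner of the two.
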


\begin{proof}
The homological long exact sequence is functorial with respect to morphisms of short exact sequences. This implies commutativity of all the squares in~\eqref{eq:big-grid} with the exception of the bottom right square~\eqref{eq:square-diag} which has to be treated separately and for which we prove that the relation~\eqref{eq:square} holds.  

The sign change can be interpreted as follows if we work with field coefficients: the middle line in our diagram of complexes is isomorphic to a short exact sequence of cones (see Remark~\ref{rmk:cone}) of chain maps $X''\to X[-1]$, $Y''\to Y[-1]$, respectively $Z''\to Z[-1]$. The resulting diagram in homology would commute, which amounts to anti-commutativity of the above square in view of $\p_{[-1]}=-\p$ (see also~\cite[Lemma~5.6]{BOGysin}). If we work with arbitrary coefficients one can argue in a similar fashion using distinguished triangles in the derived category.

We give here an elementary proof of~\eqref{eq:square}. In order to follow the proof we suggest that the reader draws three copies of~\eqref{eq:exact} in degrees $i$, $i-1$ and $i-2$. 

Denote the differentials of our complexes by $d_X$, $d_Y$ etc. Elements of the complexes will be denoted by $x_i\in X_i$, $y'_i\in Y'_i$ etc. We recall that the connecting map $\p'':H_\cdot(Z'')\to H_{\cdot-1}(X'')$ is defined as follows. Given a cycle $z''_i\in Z''_i$, we lift it to $y''_i$, apply $d_{Y''}$, find a (unique) preimage $x''_{i-1}$ of $d_{Y''}y''_i$ and define $\p''[z''_i]:=[x''_{i-1}]$. 

Let us prove that $\p_X\p''+\p\p_Z=0$. 

Let $z''_i\in Z''_i$ be a cycle. Let us choose lifts $z'_i$ and $y''_i$ admitting a common lift $y'_i$. Then $d_{Y'}y'_i$ is a common lift for $d_{Z'}z'_i$ and $d_{Y''}y''_i$. Choose unique preimages $z_{i-1}$ and $x''_{i-1}$  for the latter (these are cycles and define  $\p''[z''_i]$, resp. $\p_Z[z''_i]$). Choose a lift $y_{i-1}$ of $z_{i-1}$. Then $g_{i-1}(y_{i-1})$ and $d_{Y'}y'_i$ both project onto $d_{Z'}z'_i$, so that there exists $x'_{i-1}$ such that 
$$
d_{Y'}y'_i=g_{i-1}(y_{i-1}) - u'_{i-1}(x'_{i-1}).
$$
The element $x'_{i-1}$ is necessarily a lift of $\bar x''_{i-1}:=-x''_{i-1}$ since its image under the composite map $g'_{i-1}\circ u'_{i-1}$ is $-d_{Y''}y''_i$. Let $x_{i-2}$, $\tilde x_{i-2}$ be the unique preimages of $d_{X'}x'_{i-1}$ and $d_Yy_{i-1}$. By applying $d_{Y'}$ the previous equation yields $u'_{i-2} d_{X'}x'_{i-1}= g_{i-2}d_Yy_{i-1}$ and, by injectivity of all the maps that are involved, we obtain $x_{i-2}=\tilde x_{i-2}$. The class of this cycle defines both $\p_X[\bar x''_{i-1}]$ and $\p[z_{i-1}]$, and we obtain $-\p_X\p''[z''_i]=\p\p_Z[z''_i]$.  
\end{proof}

\begin{proof}[Proof of Theorem~\ref{thm:Gysin-spec}]
The main observation is that, for any Hamiltonian $H$ as in~\S\ref{sec:simplifying}, the subcomplex $SC_*^{S^1,-}(H)$ is an $S^1$-subcomplex in the sense of the previous definition. The Theorem is then a straightforward consequence of Proposition~\ref{prop:Gysin-spec}, using the exactness of the direct limit functor and the fact that a continuation map determined by an increasing homotopy of Hamiltonians determines a morphism between the  diagrams~\eqref{eq:exact} associated to inclusions $SC_*^{S^1,-}(H)\subset SC_*^{S^1}(H)$.
\end{proof}

\subsection{Join construction}
\label{sec:S1-join}

\subsubsection{The join construction~\cite{Milnor1956}} \label{sec:join}

Milnor's construction, which we now recall, provides explicit models for $ES^1$ and $BS^1:=ES^1/S^1$. Let 
$$
\Delta^N=\{(t_0,\dots,t_N)\in\R^{N+1}\, : \, t_i\ge 0, \ \sum_{i=0}^N t_i=1\}
$$ 
be the standard $N$-dimensional simplex. Given topological spaces $X_i$, $i=0,\dots,N$, denote by $(t_0x_0,\dots,t_Nx_N)$ an element of $X_0\times\dots\times X_N\times\Delta^N$, with $x_i\in X_i$ and $t=(t_0,\dots,t_N)\in\Delta^N$. The join $X_0*\dots*X_N$ is defined as the quotient of $X_0\times\dots\times X_N\times\Delta^N$ by the equivalence relation for which
$(t_0x_0,\dots,t_Nx_N)\sim (t'_0x'_0,\dots,t'_Nx'_N)$ if and only if $t_i=t'_i$ for all $i$ and $x_i=x'_i$ whenever $t_i=t'_i>0$. The join of a countable sequence $(X_i)_{i\ge 0}$ of topological spaces is defined as the direct limit of finite joins under the maps $X_0*\dots *X_N\hookrightarrow X_0*\dots*X_{N+1}$ induced by the standard inclusions $\Delta^N\hookrightarrow \Delta^{N+1}$. 
Given a topological group $G$, the infinite join $G*G*G\dots$ is a model for the \emph{universal principal $G$-bundle $EG$}. The group $G$ acts on $EG$ by $g\cdot(t_0g_0,\dots,t_Ng_N,\dots):=(t_0(gg_0),\dots,t_N(gg_N),\dots)$. The quotient $BG:=EG/G$ is called \emph{the classifying space of $G$}. 

In the case $G=S^1$ which is of interest for this paper, we have an identification between the $2N+1$-dimensional sphere $S^{2N+1}=\{(z_0,\dots,z_N)\in\C^{N+1}\, : \, \sum_{i=0}^N |z_i|^2=1\}$ and the join of $N+1$ circles $S^1*\dots *S^1$ via the map  
$$
(z_0,\dots,z_N)\mapsto (|z_0|^2\arg(z_0),\dots,|z_N|^2\arg(z_N)).
$$
We thus recover the models $ES^1=\lim_{N\to\infty} S^{2N+1}$ and $BS^1=\C P^\infty$. Our convention for $\arg(z)\in\R/\Z$ is to define it by the formula $z=|z|e^{2\pi \arg(z) i}$ for $z\in\C^*$.

\subsubsection{$S^1$-invariant extension of time-dependent Hamiltonians}\label{sec:HN}

Let $H:S^1\times \hat W\to \R$ be a time-dependent Hamiltonian. We extend $H$ to an $S^1$-invariant Hamiltonian 
$$
H_{N,0}:S^1\times \hat W\times S^{2N+1}\to \R
$$
by the formula 
$$
H_{N,0}(\theta,\cdot,z):=\sum_{j=0}^N |z_j|^2 H^{\theta-\arg(z_j)}. 
$$
Equivalently, viewing $S^{2N+1}$ as the join of $N+1$ copies of $S^1$ as in~\S\ref{sec:join}, we have 
$$
H_{N,0}(\theta,\cdot,(t_0\tau_0,\dots,t_N\tau_N)):=\sum_{j=0}^N t_j H^{\theta-\tau_j}.
$$ 

\begin{remark}[On the smoothness of $H_{N,0}$] {\it 
The Hamiltonian $H_{N,0}$ is of class $C^1$ with respect to the parameter $z$, as can be seen from the formula for $\vec\nabla_z H_{N,0}$ in the proof of Lemma~\ref{lem:HNf} below. However, $H_{N,0}$ is in general not of class $C^2$. In practice, one has to consider a smooth approximation of $H_{N,0}$, say of the form $\sum_{j=0}^N \rho(|z_j|)H^{\theta-\arg(z_j)}$, with $\rho:[0,1]\to [0,1]$ a smooth strictly increasing function that equals $1$ at $1$ and that vanishes at $0$ together with all its derivatives. In the sequel we work with $H_{N,0}$ in order not to burden the notation. 
}
\end{remark}

Given a function $f:\C P^N\to \R$ we define 
$$H_{N,f}:S^1\times \hat W\times S^{2N+1}\to \R$$ 
by the formula
$$
 H_{N,f}(\theta,x,z):=H_{N,0}(\theta,x,z)+\tf(z),
$$
where $\tf:S^{2N+1}\to\R$ denotes as usual the $S^1$-invariant lift of $f$.

For the statement of the next Lemma, we recall the definition of Hofer's norm  
$$\|H\|=\int_{S^1} \max_x H(\theta,x) - \min_x H(\theta,x)\, d\theta.$$ 

\begin{lemma} \label{lem:HNf}
Let $f([z_0:\dots:z_N]):=\sum_{j=0}^N a_j|z_j|^2/\|z\|^2$ with $a_j\in\R$. Denote by $Z_j\in S^{2N+1}$ the vector whose $j$-th component is equal to $1$. 

(i) We have $S^1\cdot (\cP(H)\times \{Z_j\, : \, j=0,\dots,N\}) \subseteq \mathrm{Crit}(\cA_{H_{N,f}})$. 

(ii) Assume $a_j-a_{j-1}>\|H\|$ for all $j=1,\dots,N$. Then we have equality in (i).
\end{lemma}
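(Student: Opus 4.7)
The plan is to apply the critical point equations~\eqref{eq:periodicpar} directly to the family $H_{N,f}$: a pair $(\gamma,z)$ belongs to $\mathrm{Crit}(\cA_{H_{N,f}})$ precisely when $\gamma$ is a $1$-periodic orbit of $H_{N,f}(\cdot,\cdot,z)$ and the tangential gradient on $S^{2N+1}$ of $z\mapsto\int_{S^1}H_{N,f}(\theta,\gamma(\theta),z)\,d\theta$ vanishes. For part~(i), I would first observe that at $z=Z_j$ the only surviving term in $H_{N,0}$ is the one with $r_j=1$ and $\arg(z_j)=0$, so $H_{N,f}(\cdot,\cdot,Z_j)=H+a_j$ and every $\gamma\in\cP(H)$ is automatically a $1$-periodic orbit. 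To verify the $z$-gradient condition at $(\gamma,Z_j)$, I would decompose $T_{Z_j}S^{2N+1}$ into the $z_k$-planes for $k\neq j$ together with the imaginary direction in the $z_j$-slot. For $k\neq j$ the $k$-th summand $|z_k|^2 H(\theta-\arg(z_k),x)$ vanishes to order $|z_k|^2$ and has zero partial derivative at $z_k=0$; for the imaginary direction $\xi_j=iv$, expanding $\arg(1+itv)=tv/(2\pi)+O(t^2)$ produces a first-order variation $-v/(2\pi)\,\partial_\theta H(\theta,x)$, and the resulting integral $\int_{S^1}\partial_\theta H(\theta,\gamma(\theta))\,d\theta$ vanishes from $\frac{d}{d\theta}H(\theta,\gamma(\theta))=\partial_\theta H(\theta,\gamma(\theta))+dH(X_H)=\partial_\theta H(\theta,\gamma(\theta))$ combined with periodicity. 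The $\tf$-contribution vanishes because $Z_j$ is a critical point of $\tf$, and the $S^1$-invariance of $\cA_{H_{N,f}}$ propagates the critical point to the whole orbit $S^1\cdot(\gamma,Z_j)$.

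For part~(ii), I would start from a critical point $(\gamma,z)\in\mathrm{Crit}(\cA_{H_{N,f}})$, write $z_k=r_k e^{2\pi i\tau_k}$, and unpack the tangential gradient condition on $S^{2N+1}$ with Lagrange multiplier $\lambda$ into scalar identities in each $(x_k,y_k)$-plane. Using $\partial_{x_k}\tau_k=-y_k/(2\pi r_k^2)$ and its analogue for $\partial_{y_k}$, projection of the ambient gradient onto the radial direction $(x_k,y_k)$ and onto its rotation $(y_k,-x_k)$ yields, for each $k$,
\begin{equation*}
r_k^2\bigl(I_k+a_k-\lambda/2\bigr)=0,\qquad r_k^2 J_k=0,
\end{equation*}
where $I_k:=\int_{S^1}H(\theta-\tau_k,\gamma(\theta))\,d\theta$ and $J_k:=\int_{S^1}\partial_\theta H(\theta-\tau_k,\gamma(\theta))\,d\theta$. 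Setting $K:=\{k\,:\,r_k>0\}$, the quantity $I_k+a_k=\lambda/2$ is constant on $K$. The decisive input is the Hofer-norm estimate
\begin{equation*}
|I_{k_1}-I_{k_2}|\leq\int_{S^1}\bigl(\max_x H(\theta-\tau_{k_1},x)-\min_x H(\theta-\tau_{k_2},x)\bigr)\,d\theta=\|H\|,
\end{equation*}
valid for any $k_1,k_2\in K$ (each shift becomes $\int\max_x H$ or $\int\min_x H$ after a change of variables). Assuming $|K|\geq 2$ and picking $k_1<k_2$ in $K$ would give $a_{k_2}-a_{k_1}=I_{k_1}-I_{k_2}$ of absolute value at most $\|H\|$, while the hypothesis $a_j-a_{j-1}>\|H\|$ telescopes to $a_{k_2}-a_{k_1}>\|H\|$, a contradiction. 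Hence $K=\{j\}$ for a unique $j$, i.e., $z=e^{2\pi i\tau}Z_j$ for some $\tau\in S^1$; then $H_{N,f}(\cdot,\cdot,z)=H(\cdot-\tau,\cdot)+a_j$ forces $\gamma(\cdot)=\gamma_0(\cdot-\tau)$ with $\gamma_0\in\cP(H)$, and $(\gamma,z)=\tau\cdot(\gamma_0,Z_j)$ lies in the right-hand side of~(i).

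The main technical obstacle will be deriving the pair of scalar conditions $r_k^2(I_k+a_k-\lambda/2)=0$ and $r_k^2 J_k=0$ cleanly, because $H_{N,0}$ is only $C^1$ in $z$ and $\arg(z_k)$ is singular at $z_k=0$. I would carry out the projection computation on the open locus $\{r_k>0\}$, where the contribution at $r_k=0$ simply drops out consistently with the $C^1$-extension of $H_{N,0}$, and only afterward recombine with the Lagrange-multiplier formalism to encode the tangential-gradient condition. Once this reduction is in place, the Hofer-norm estimate closes part~(ii) immediately.
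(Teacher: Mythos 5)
Your argument is correct and follows essentially the same route as the paper: you derive the critical-point equations by unpacking the tangential gradient on $S^{2N+1}$ (your Lagrange-multiplier reformulation is equivalent to the paper's explicit formula for $\vec\nabla_z H_{N,0}$, in which the radial component has already been subtracted), verify (i) by the same identity $\int_{S^1}\partial_\theta H(\theta,\gamma(\theta))\,d\theta=0$ coming from $dH(X_H)=0$ plus periodicity, and prove (ii) by subtracting the two scalar conditions for $k_1\neq k_2$ and invoking the Hofer-norm estimate exactly as in the paper. The only presentational difference is whether one writes out the gradient componentwise from the start or phrases the projection via a Lagrange multiplier, which does not change the substance.
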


\begin{proof}
Denote $\dot H(\theta,x)=\p_\theta H(\theta,x)$ and remark that $\tf(z)=\sum_{j=0}^N a_j|z_j|^2$. A straightforward computation shows that 
$$
\vec\nabla_z H_{N,0}(\theta,x,z)= \big( 2(H^{\theta-\arg z_j}(x)-H_{N,0}(\theta,x,z))z_j - \frac 1 {2\pi} \dot H^{\theta-\arg z_j}(x) iz_j\big)_{j=0,\dots,N},
$$
whereas
$$
\vec \nabla \tf(z) = \big( 2(a_j-\tf(z))z_j\big)_{j=0,\dots,N}.
$$

The equations for a critical point $(\gamma,z)$ of $\cA_{H_{N,f}}$ are therefore 
\begin{equation} \label{eq:crit1}
\dot\gamma(\theta)=\sum_{j=0}^N |z_j|^2 X_H^{\theta-\arg z_j}(\gamma(\theta))
\end{equation}
and, for all $j$, either $z_j=0$ or 
\begin{equation}\label{eq:crit2}
\int_{S^1} H^{\theta-\arg z_j}(\gamma(\theta))\, d\theta - \int_{S^1} H_{N,0}(\theta,\gamma(\theta),z)\, d\theta + a_j - \tf(z) =0,
\end{equation}
\begin{equation}\label{eq:crit3}
\int_{S^1} \dot H^{\theta-\arg z_j}(\gamma(\theta))\, d\theta=0.
\end{equation}

Let us prove (i). We prove more, namely that if $z=Z_j$, then $(\gamma,z)\in\mathrm{Crit}(\cA_{H_{N,f}})$ if and only if $\gamma\in\cP(H)$. Indeed, we have $H_{N,f}(\theta,x,z)=H(\theta,x) + a_j$ and~\eqref{eq:crit1} is equivalent to $\gamma\in\cP(H)$. Assuming now that $\gamma\in\cP(H)$ we see that~\eqref{eq:crit2} is tautologically satisfied, whereas for~\eqref{eq:crit3} we compute with $\arg z_j=0$ 
\begin{eqnarray*}
\int_{S^1}\dot H^{\theta-\arg z_j}(\gamma(\theta))\, d\theta & = & \int_{S^1} \frac d {d\theta} H^\theta(\gamma(\theta))\, d\theta - \int_{S^1} \p_x H^\theta(\gamma)\dot\gamma\, d\theta \\Ê
& = & -\int_{S^1}\hat\omega(X_H^\theta(\gamma),X_H^\theta(\gamma))\, d\theta=0.
\end{eqnarray*}

We prove (ii). Let $(\gamma,z)\in\mathrm{Crit}(\cA_{H_{N,f}})$ and denote $h_j=\int H^{\theta-\arg z_j}(\gamma(\theta))\, d\theta$. Assuming by contradiction that there exist $k>j$ such that $z_k$ and $z_j$ are both nonzero, we subtract equations~\eqref{eq:crit2} for $k$ and $j$ to obtain $h_j-h_k=a_k-a_j$. Now $h_j-h_k$ is bounded from above by $\|H\|$, which contradicts our assumption on $f$. Thus the vector $z$ has exactly one nonzero component and, up to multiplying by an element of $S^1$, we can assume that the latter is equal to $1$, so that $z$ is equal to some $Z_j$. We then conclude using (i). 
\end{proof}

\begin{remark} {\it
The above condition $a_j-a_{j-1}>\|H\|$ in~(ii) is satisfied by functions of the form $f([z_0:\dots:z_N])=C\sum_{j=0}^N(j+1)|z_j|^2/\sum_{j=0}^N|z_j|^2$ with $C>\|H\|$, and these functions also satisfy assumption~(\ref{item:2}) in~\S\ref{sec:simplifying}.
}
\end{remark}

\subsubsection{Representations}Ê\label{sec:representations}

Let $\mu=(t_0,\dots,t_N):\C P^N\to\Delta^N$ be the moment map for the standard $\T^N$-action, defined by 
$$
t_j([z]):=|z_j|^2/\|z\|^2.
$$
Given indices $N\ge k\ge j\ge 0$ we denote by $\Delta_{k,j}$ the $k-j$-dimensional simplex $\{t=(t_0,\dots,t_N)\in\Delta^N\, : \, \mathrm{supp}(t)\subset \{j,\dots,k\}\}$. Given indices $k\ge k'\ge j'\ge j$ we denote $i_{\Delta_{k',j'}\hookrightarrow \Delta_{k,j}}$ the natural inclusion. 

Let $f:\C P^N\to\R$ be given by $f([z_0:\dots:z_N]):=\sum_{j=0}^N a_j|z_j|^2/\|z\|^2$ with $a_N>a_{N-1}>\dots>a_0>0$. 
This is a perfect Morse function whose unique critical point of index $2j$ is $[Z_j]=[0\ldots :1:0\dots]$, the point for which the non-vanishing coordinate appears on the $j$-th position.  We have $f([Z_j])=a_j$. 

Let $\tf$ be the $S^1$-invariant lift of $f$ to $S^{2N+1}$ equipped with the round metric. We have computed $\vec\nabla \tf (z)=2((a_j-\tf(z))z_j)$, so that for any $k>j$ the negative gradient flow of $\tf$ preserves the spheres $S_{k,j}=\{z=(z_0,\dots,z_N)\in S^{2N+1}\, : \, \mathrm{supp}(z)\subset\{j,\dots,k\}\}$. Each such sphere has dimension $2(k-j)+1$, and the function $\tf|_{S_{k,j}}$ achieves its minimum, resp. maximum, along the orbit $S^1\cdot Z_j$, resp. $S^1\cdot Z_k$. Similarly, the negative gradient flow of $f$ preserves $S_{k,j}/S^1\simeq \C P^{k-j}$.

For $k>j$ we denote $\hat\cM(k,j)$ the space of negative gradient trajectories of $f$ connecting $[Z_k]$ and $[Z_j]$, and $\cM(k,j):=\hat\cM(k,j)/\R$ the moduli space of negative gradient trajectories. We denote $\hat\cM(j,j)=\cM(j,j)=\{[Z_j]\}$. Note that $\cM(k,j)$ is identified with $S^{2(k-j)-1}\setminus S^{2(k-j)-3}$, and its Morse-Floer compactification $\overline \cM(k,j)$ is such that 
$$\p \overline \cM(k,j):=\overline \cM(k,j)\setminus \cM(k,j)=\cup_{k>\ell>j} \overline \cM(k,\ell)\times \overline \cM(\ell,j).
$$
As we will see below, the identities $\sum_{i+j=k}\varphi_i\varphi_j=0$, $k\ge 0$ from~\eqref{eq:fi} defining an $S^1$-complex reflect the above structure of the boundary of the moduli space $\mathcal{M}(N,0)$. 

In the next definition we denote by $\mathscr{X}$ a collection consisting of topological spaces $X_{k,j}$, $N\ge k\ge j\ge 0$ together with maps $i_{(k',j')\rightarrow (k,j)}:X_{k',j'}\to X_{k,j}$ defined for $k\ge k'\ge j'\ge j$ and satisfying $i_{(k',j')\rightarrow (k,j)}\circ i_{(k'',j'')\rightarrow (k',j')}=i_{(k'',j'')\rightarrow (k,j)}$ and $i_{(k,j)\rightarrow (k,j)}=\mathrm{Id}$. Examples are the collection $\mathscr{J}$ consisting of $X_{k,j}=\cJ$, the space of time-dependent almost complex structures on $\hat W$ with all maps equal to the identity, and the collection $\Delta$ consisting of $X_{k,j}=\Delta_{k,j}$ with maps $i_{(k',j')\rightarrow (k,j)}:=i_{\Delta_{k',j'}\hookrightarrow \Delta_{k,j}}$. We denote $\cF(\R,\mathscr{X}):=\sqcup_{i,j} \cF(\R,X_{i,j})$, where $\cF(\R,X_{i,j})$ is the space of continuous functions $\R\to X_{i,j}$.

\begin{definition} \label{defi:representation} 
A \emph{$\cF(\R,\mathscr{X})$-valued representation of $(\C P^N,f)$} is a collection of continuous maps 
$$
\rho_{k,j} : \cM(k,j)\to\cF(\R,X_{k,j}), \quad N\ge k\ge j \ge 0
$$
satisfying the following \emph{Gluing Axiom}:
\begin{center}
Assume $\lambda^\nu\in\cM(k,j)$, $\nu\in\N$ is a sequence that converges in the Floer sense~\cite{Floer-Lagrangian}, i.e. in $C^\infty_{loc}$ up to shifts, to a broken gradient trajectory $\lambda_1\#\dots\#\lambda_\ell$, $\ell\ge 1$ with $\lambda_i\in\cM(k_{\ell-i+1},k_{\ell-i})$, $k_\ell=k$, $k_0=j$. There exist sequences of real numbers (``shifts") $\sigma_1^\nu,\dots,\sigma_\ell^\nu$ s.t.
$$
\forall\,i, \ \rho_{k,j}(\lambda^\nu)(\cdot + \sigma_i^\nu)\stackrel {C^0_{\mathrm{loc}}}\longrightarrow i_{(k_{\ell-i+1},k_{\ell-i})\rightarrow (k,j)} \circ \rho_{k_{\ell-i+1},k_{\ell-i}}(\lambda_i).
$$
\end{center}
\end{definition}

\begin{definition} \label{defi:translinvrep}
A $\cF(\R,\mathscr{X})$-valued representation of $(\C P^N,f)$ is said to be \emph{translation invariant} if we are given identifications $X_{k,j}\simeq X_{k-\ell,j-\ell}$ for all $k>j$ and $0\le \ell\le j$, such that $\rho_{k,j}=\rho_{k-\ell,j-\ell}$ with respect to the natural identifications $\cM(k,j)\simeq \cM(k-\ell,j-\ell)$.
\end{definition}

For given $f$ and $\mathscr{X}$, the space of (translation invariant) representations is convex if each $X_{k,j}$ is convex and the inclusions are linear, and it is contractible if each $X_{k,j}$ is contractible. 

In the case $\mathscr{X}=\Delta$ or $\mathscr{X}=\mathscr{J}$, which are the ones of interest for our construction, we assume in addition to Definition~\ref{defi:representation} convergence in $C^\infty_{\mathrm{loc}}$. One could of course formulate a variant of Definition~\ref{defi:representation} along these lines assuming that $\mathscr{X}$ is an arbitrary closed subset of a Fr\'echet manifold, but such a generality would not be justified by the specific purposes that we have in mind. 

We now give two examples. 

\begin{example} \label{ex:1} {\it 
Let us fix for each $N\ge k>j\ge 0$ a section of the principal $\R$-bundle $\hat\cM(k,j)\to\cM(k,j)$ and identify $\cM(k,j)$ with the image of this section in $\hat\cM(k,j)$. It then makes sense to talk about $\lambda(s)$, $s\in\R$ for $\lambda\in\cM(k,j)$, and for every $\hat\lambda\in\hat\cM(k,j)$ there exist unique $\lambda\in\cM(k,j)$ and $\sigma\in\R$ such that $\hat\lambda=\lambda(\cdot+\sigma)$. 

Recall the moment map $\mu=(t_0,\dots,t_N):\C P^N\to\Delta^N$ and define $\rho_{k,j}:\cM(k,j)\to\cF(\R,\Delta_{k,j})$ by 
$$
\rho_{k,j}(\lambda)(s):=\mu(\lambda(s)).
$$
The collection $\{\rho_{k,j}\}$ is a $\cF(\R,\Delta)$-valued representation of $(\C P^N,f)$. Moreover, we have by definition $(\rho_{k,j})_i(\lambda)=0$ whenever $t_i(\lambda)=0$, since $\mu=(t_0,\dots,t_N)$.
This representation is translation invariant if the coefficients of the function $f$ satisfy $a_j-a_{j-1}=ct.$ for all $j=1,\dots,N$ and if the sections $\cM(k,j)\to\hat\cM(k,j)$ coincide under the natural identifications $\cM(k,j)\simeq \cM(k-\ell,j-\ell)$, $0\le \ell\le j$.
}
\end{example}

\begin{example}Ê\label{ex:2} {\it 
We identify $\cM(j,j-1)$, $N\ge j\ge 1$ with $S^1$ and denote the parameter on this circle by $\tau_{j-1}$. The gluing construction for gradient trajectories of $f$ provides a diffeomorphism between a neighborhood of $\cM(N,N-1)\times\cM(N-1,N-2)\times\dots\times\cM(1,0)$ inside $\overline\cM(N,0)\setminus \p\overline\cM(N,0)$ and the space $(S^1)^N\times ([R,\infty[)^{N-1}$, $R\gg 0$, with parameters $(\tau_{N-1},\dots,\tau_0)\in(S^1)^N$ and $(L_{N-1},\dots,L_1)\in([R,\infty[)^{N-1}$. The parameters $L_i$ are ``gluing parameters''. 

Note that $\cM(N,0)\simeq S^{2N-1}\setminus S^{2N-3}\simeq S^1\times\dots\times S^1\times\dot\Delta^{N-1}/\!\!\sim$. Here $\sim$ is the same equivalence relation as the one defining the join of $N$ circles, and $\dot\Delta^{N-1}$ is the simplex $\Delta^{N-1}$ with its first face removed. The parameter space $([R,\infty[)^{N-1}$ is diffeomorphic to a neighborhood of the face inside $\dot\Delta^{N-1}$, and we can extend this to a diffeomorphism $([0,\infty[)^{N-1}\simeq \dot\Delta^{N-1}$. We obtain in this way a diffeomorphism 
$\cM(N,0)\simeq (S^1)^N\times ([0,\infty[)^{N-1}/\sim$.

Similarly we define diffeomorphisms $\cM(k,j)\simeq (S^1)^{k-j}\times ([0,\infty[)^{k-j-1}/\sim$ for all $N\ge k>j\ge 0$. 

Let $\beta : \R \to [0, 1]$ be an increasing function  
such that $\beta(s) = 0$ for $s \le 0$ and $\beta(s) = 1$ for $s \ge 1$. We define 
$$
\rho_{N,0}=(t_0,\dots,t_N):\cM(N,0)\simeq (S^1)^N\times ([0,\infty[)^{N-1}/\sim\ \to\ \cF(\R,\Delta^N)
$$ 
by
$$
t_i(s):=\beta(s-L_{N-1}-\dots-L_{i+1})-\beta(s-L_{N-1}-\dots-L_i), \qquad i=N-1,N-2,\dots, 1
$$
and 
$$
t_0(s)=\beta(s-L_{N-1}-\dots-L_1), \qquad t_N(s)=1-\beta(s)=1-\sum_{i=0}^{N-1}t_i(s).
$$
The expanded formulas read 
$$t_N(s)=1-\beta(s),$$ 
$$t_{N-1}(s)=\beta(s)-\beta(s-L_{N-1}),$$ 
$$t_{N-2}(s)=\beta(s-L_{N-1})-\beta(s-L_{N-1}-L_{N-2}),$$ 
and so on up to 
$$t_1(s)=\beta(s-L_{N-1}-\dots-L_2)-\beta(s-L_{N-1}-\dots-L_1),$$ 
$$t_0(s)=\beta(s-L_{N-1}-\dots-L_1).$$

Similarly, for $k>j$ we define $\rho_{k,j}=(0,\dots,t_j,\dots,t_k,0,\dots):\cM(k,j)\simeq (S^1)^{k-j}\times ([0,\infty[)^{k-j-1}/\sim \ \to \ \cF(\R,\Delta^{k-j})$ by
$$t_k(s)=1-\beta(s),$$
$$t_{k-1}(s)=\beta(s)-\beta(s-L_{k-1}),$$
$$t_{k-2}(s)=\beta(s-L_{k-1})-\beta(s-L_{k-1}-L_{k-2}),$$
and so on up to 
$$t_{j+1}(s)=\beta(s-L_{k-1}-\dots-L_{j+2})-\beta(s-L_{k-1}-\dots-L_{j+1}),$$
$$t_j(s)=\beta(s-L_{k-1}-\dots-L_{j+1}).$$

The collection $\{\rho_{k,j}\}$ defines a $\cF(\R,\Delta)$-valued representation of $(\C P^N,f)$. 
And again, we have $(\rho_{k,j})_i(\lambda)=0$ whenever $t_i(\lambda)=0$ by definition. This representation is translation invariant by construction. 
}
\end{example}

\subsubsection{Reformulation of the $S^1$-equivariant complex using representations} \label{sec:reformulate}

We explain in this section how to transform the system~\eqref{eq:Floer-cont} into a single equation with parameters. 

Given $H\in\cH$ and a $\cF(\R,\Delta)$-valued representation $\rho=\{\rho_{k,j}\}$, we define for every $\lambda\in\cM(N,0)$ the Hamiltonian $H_\lambda:\R\times S^1\times \hat W\to \R$ via the formula 
$$
H_\lambda(s,\theta,x):=\sum_{j=0}^N (\rho_{N,0})_j(\lambda)(s)H^{\theta-\tau_j(\lambda)}(x),
$$
with the convention $\tau_N=0$.
Formally we have $H_\lambda=H_{N,0}\circ \rho(\lambda)$ with $H_{N,0}$ as in~\S\ref{sec:HN}. In the previous formula we use that the tuple $(\tau_N,\dots,\tau_0)$ is well-defined along a gradient trajectory of $\tilde f$, and hence if we fix $\tau_N=0$ the tuple $(\tau_{N-1},\dots,\tau_0)$ is well-defined along a gradient trajectory of $f$ on $\mathbb{C}P^N$. 
Note that we need to have $(\rho_{N,0})_j(\lambda)=0$ whenever $t_j(\lambda)=0$ in order for the formula to make sense. However, since $t_j(\lambda)\neq 0$ for all $j$ and all $\lambda\in\cM(N,0)$, this condition is tautologically satisfied.

Let $H\in\cH$. Choose $f$ of the form $[z]\mapsto C\sum_{j=0}^N (j+1)|z_j|^2/\sum_{j=0}^N |z_j|^2$ with $C>\|H\|$ (see~\S\ref{sec:simplifying} and Lemma~\ref{lem:HNf}). Fix  two \emph{translation invariant} representations of $(\C P^N,f)$ as follows: a representation $\rho=\{\rho_{k,j}\}$ which is $\cF(\R,\Delta)$-valued, and a representation $J=\{J_{k,j}\}$ which is $\cF(\R,\mathscr{J})$-valued and generic. 

Given $\og,\ug\in\cP(H)$ we define the moduli space 
$\cM^N(\og,\ug;H,J,\rho)$ as the set of tuples $(u,\lambda)$, with $\lambda\in\cM(N,0)$ and $u:\R\times S^1\to\hat W$ solving the equation 
$$
\p_s u + J_{N,0}^\lambda(s)(\theta)(u)\left(\p_\theta u - X _{H_\lambda}(s,\theta,u(s,\theta))\right) = 0
$$
subject to the asymptotic conditions 
$$
u(s,\cdot)\stackrel {s\to-\infty}\longrightarrow \og(\cdot), \qquad u(s,\cdot) \stackrel {s\to+\infty}\longrightarrow 
\ug(\cdot-\tau_0(\lambda)). 
$$

In the case $N=0$ the above equation does not depend on $s$ and we further divide out the $\R$-action on the $s$-variable. For a generic choice of the representation $J$ the space $\cM^N(\og,\ug;H,J,\rho)$ is a smooth manifold of dimension 
$$
|\og|-|\ug|+\dim\cM(N,0)=|\og|-|\ug|+2N-1.
$$

This defines for each $N\ge 0$ a map $\varphi_N:SC_*(H)\to SC_{*+2N-1}(H)$. Note that $\varphi_0$ is the differential in the Floer complex with respect to the almost complex structure $J_{0,0}\in\cJ$. The way in which we have set up the Floer moduli problem, using translation invariant representations which satisfy the Gluing Axiom in Definition~\ref{defi:representation}, implies that for all $k\ge 0$ we have the relation
$$
\sum_{i+j=k}\varphi_i\varphi_j=0.
$$
The collection $\{\varphi_i\}$, $i\ge 0$ defines therefore an $S^1$-structure on $SC_*(H)$, with associated $S^1$-equivariant homology groups denoted $SH_*^{S^1}(H,J,\rho)$. Transversality is ensured for generic data because the almost complex structure is allowed to depend on all the parameters $(\lambda,s,\theta)$. 

In our case we have considered representations with $\mathscr{X}=\Delta$ or $\mathscr{X}=\mathscr{J}$, so that the space of (translation invariant) representations is convex, respectively contractible. As a consequence, these homology groups are independent of the choice of $\rho$ and $J$. As a matter of fact, they coincide with the homology groups defined in~\S\ref{sec:simplifying}. This is seen as follows. Our function $f$ satisfies conditions~(\ref{item:1}--\ref{item:2}) in~\S\ref{sec:simplifying}, and our Hamiltonians $H_{N,0}$ satisfy condition~\eqref{item:3}. Similarly, the representation $J$ satisfies~\eqref{item:4} since it is assumed to be translation invariant. Taking as a $\cF(\R,\Delta)$-valued representation $\rho$ the one in Example~\ref{ex:1}, we see that our maps $\varphi_N$ tautologically coincide with the analogous maps defined in~\S\ref{sec:simplifying}. Thus the corresponding $S^1$-equivariant homology groups $SH_*^{S^1}(H)$ are the same. 

This construction is based on the definition of $H_{N,0}$, and the latter applies universally to all $H\in\cH$. As a consequence, the homology groups $SH_*^{S^1}(H)$ behave in the expected manner with respect to continuation morphisms induced by increasing homotopies of Hamiltonians in $\cH$. The limit over $H\in\cH$ is therefore isomorphic to $SH_*^{S^1}(W)$ as defined in the previous sections. 

A discussion analogous to the one in~\S\ref{sec:simplifying} applies in order to define $SH_*^{S^1,\pm}(W)$. 

\subsubsection{An explicit representation} \label{sec:explicit}

We spell out in this section the resulting construction in the case of Example~\ref{ex:2}. Figure~\ref{fig:param-ham} gives a hopefully useful, though schematic, illustration of our notation.

We fix an increasing cutoff function $\beta:\R\to[0,1]$ such that $\beta(s)=0$ for $s\le 0$ and $\beta(s)=1$ for $s\ge 1$. 

Let $H\in \cH$. Given an integer $N\ge 1$ and parameters $\utau=(\tau_0,\dots,\tau_{N-1})\in (S^1)^N$, $\uL=(L_1,\dots, L_{N-1})\in \left( [0,\infty[ \right)^{N-1}$, we denote $\tau_N:=0\in S^1$ and define
$$
H_{\utau,\uL}:\R\times S^1\times \hat W\to \R
$$ 
by
$$
H_{\utau,\uL}(s,\theta,x) := \sum_{i=0}^N t_i(s)H^{\theta-\tau_i}(x), 
$$
with 
$$
t_i(s):=\beta(s-L_{N-1}-\dots-L_{i+1})-\beta(s-L_{N-1}-\dots-L_i), \qquad i=N-1,N-2,\dots, 1
$$
and 
$$
t_0(s)=\beta(s-L_{N-1}-\dots-L_1), \qquad t_N(s)=1-\beta(s)=1-\sum_{i=0}^{N-1}t_i(s).
$$

We now need to rephrase the definition of a translation invariant $\cF(\R,\mathscr{J})$-valued representation in terms of families of almost complex structures parametrized by $(S^1)^N\times([0,\infty[)^{N-1}$. Restricting slightly the degree of generality that we adopted in Definition~\ref{defi:representation}, it will be enough for our purposes  to consider sequences $J_N=J_N^{\utau,\uL}(s,\theta)$ with $(\utau,\uL)\in(S^1)^N\times([0,\infty[)^{N-1}$ and $N\ge 0$ that satisfy the following inductive property:
\begin{itemize}
\item $J_0$ depends only on $\theta$;
\item $J_1$ satisfies $J_1^{\tau_0}(s,\theta)=J_0^\theta$ for $s\le 0$ and $J_1^{\tau_0}(s,\theta)=J_0^{\theta-\tau_0}$ for $s\ge 1$; 
\item $J_2$ satisfies $J_2^{(\tau_1,\tau_0),L_1}(s,\theta)=J_1^{\tau_1}(s,\theta)$ for $s\le L_1$ and $L_1$ large enough, as well as $J_2^{(\tau_1,\tau_0),L_1}(s,\theta)=J_1^{\tau_0-\tau_1}(s-L_1,\theta-\tau_1)$ for $s\ge 1$ and $L_1$ large enough;
\item $J_N$ satisfies  
$J_N^{(\tau_{N-1},\dots,\tau_0),(L_{N-1},\dots,L_1)}(s,\theta)=J_{N-1}^{(\tau_{N-1},\dots,\tau_1),(L_{N-1},\dots,L_2)}(s,\theta)$ for $s\le L_{N-1}+L_{N-2}+\dots+L_1$ and $L_1$ large enough, as well as  \break
$J_N^{\utau,\uL}(s,\theta)=J_{N-1}^{(\tau_{N-2}-\tau_{N-1},\dots,\tau_0-\tau_{N-1}),(L_{N-2},\dots,L_1)}(s-L_{N-1},\theta-\tau_{N-1})$ for $s\ge 1$ and $L_{N-1}$ large enough.
\end{itemize}
We denote by $\mathscr{R}ep\mathscr{J}$ the space of sequences $\{J_N\}$, $N\ge 0$ as above. 

\begin{figure}
         \begin{center}
\input{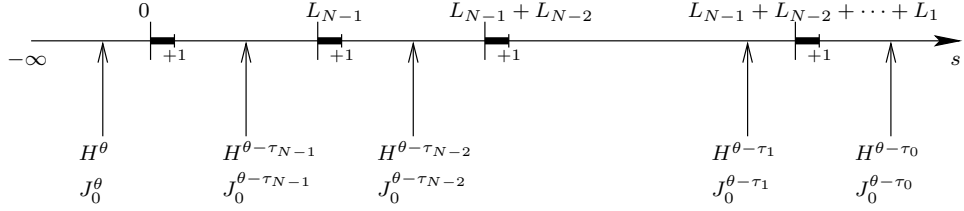}
         \end{center}
\caption{The almost complex structure $J_N$ for large values of $L_1,\dots,L_{N-1}$. The dependence on $s$ takes place only in the thickened regions. Similarly for the Hamiltonian $H_{\utau,\uL}$, with the difference that the picture is valid in the whole range $L_i\ge 1$.  \label{fig:param-ham}}
\end{figure}

Let $H\in\cH$ and $J=\{J_N\}\in\mathscr{R}ep\mathscr{J}$ be a generic choice of almost complex structures as above. Given $\og,\ug\in\cP(H)$,  denote $\cM^N(\og,\ug;H,J)$ the moduli space consisting of tuples $(u,\utau,\uL)$ such that $(\utau,\uL)\in(S^1)^N\times([0,\infty[)^{N-1}$ and $u:\R\times S^1\to\hat W$ solves the equation 
\begin{equation} \label{eq:utauL}
\p_s u + J_N^{\utau,\uL}(s,\theta,u(s,\theta))\left(\p_\theta u - X _{H_{\utau,\uL}}(s,\theta,u(s,\theta))\right) = 0
\end{equation}
subject to the asymptotic conditions 
$$
u(s,\cdot)\stackrel {s\to-\infty}\longrightarrow \og(\cdot), \qquad u(s,\cdot) \stackrel {s\to+\infty}\longrightarrow 
\ug(\cdot-\tau_0). 
$$
If $N=0$ then~\eqref{eq:utauL} is the usual Floer equation and does not depend on $s$, so that we further divide out the $\R$-action. If $N\ge 1$ the equation depends essentially on $s$. For a generic choice of $J$ the space $\cM^N(\og,\ug;H,J)$ is a smooth manifold of dimension 
$|\og|-|\ug|+2N-1$. In case the dimension is zero, one can associate using coherent orientations a sign $\eps(u)$ to each element $u$ of $\cM^N(\og,\ug;H,J)$.

Let $SC_*(H):=\oplus_{\gamma\in\cP(H)} \Z\langle\gamma\rangle$ be the Floer complex of $H$. 
The count of rigid elements in moduli spaces $\cM^N(\og,\ug;H,J)$ defines a map
$$
\varphi_N:SC_*(H)\to SC_{*+2N-1}(H),
$$
\begin{equation} \label{eq:phiNrep}
\varphi_N(\og):=\sum_{\substack{
  \ug\in\cP(H) \\ |\ug|=|\og|+2N-1}} \sum_{u\in\cM^N(\og,\ug;H,J)} \eps(u)\ug.
\end{equation}
Note that $\varphi_0:SC_*(H)\to SC_{*-1}(H)$ is
the differential in the Floer complex with respect to the almost complex structure $J_0$. The arguments in~\S\ref{sec:reformulate} prove the following.

\begin{proposition}
The maps $\{\varphi_i\}$, $i\ge 0$ define an $S^1$-structure, and the corresponding $S^1$-equivariant homology groups are isomorphic to $SH_*^{S^1}(H)$. \hfill{$\square$}
\end{proposition}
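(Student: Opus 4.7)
The strategy is to exhibit the explicit construction of Section~\ref{sec:explicit} as a special case of the general representation-based construction of Section~\ref{sec:reformulate}, and then to invoke the results already established there. Concretely, one must identify (a) the cutoff-based family $\{t_i(s)\}_{i=0}^N$ defining $H_{\utau,\uL}$ with the $\cF(\R,\Delta)$-valued representation of Example~\ref{ex:2}, and (b) the sequences $\{J_N\}\in\mathscr{R}ep\mathscr{J}$ with generic translation-invariant $\cF(\R,\mathscr{J})$-valued representations of $(\C P^N,f)$.

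For (a), recall from Example~\ref{ex:2} the diffeomorphism $\cM(N,0)\simeq (S^1)^N\times([0,\infty[)^{N-1}/{\sim}$ with coordinates $(\utau,\uL)$, and the explicit formula for $\rho_{N,0}=(t_0,\dots,t_N)$. Under this identification, $H_{\utau,\uL}(s,\theta,x)=\sum_{i=0}^N t_i(s)H^{\theta-\tau_i}(x)$ coincides tautologically with $H_{N,0}\circ\rho_{N,0}(\lambda)$, which is the Hamiltonian $H_\lambda$ of Section~\ref{sec:reformulate}. Moreover $\rho=\{\rho_{k,j}\}$ is translation invariant by construction, and the asymptotic conditions $u(s,\cdot)\to\og$ at $-\infty$ and $u(s,\cdot)\to\ug(\cdot-\tau_0)$ at $+\infty$ in~\eqref{eq:utauL} coincide with those imposed in Section~\ref{sec:reformulate}. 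For (b), observe that the inductive conditions on $J_N$ say precisely that along the gluing end $L_{N-1}\to\infty$ (respectively the far end of the cascade) the almost complex structure $J_N^{\utau,\uL}$ degenerates to the pullback of $J_{N-1}$ under the appropriate inclusion $X_{N-1,0}\hookrightarrow X_{N,0}$ or $X_{N-1,0}\simeq X_{N,1}\hookrightarrow X_{N,0}$, with the translation invariance built in. Unpacking the diffeomorphisms $\cM(k,j)\simeq (S^1)^{k-j}\times([0,\infty[)^{k-j-1}/{\sim}$ one verifies that this is exactly the Gluing Axiom of Definition~\ref{defi:representation} together with the translation invariance of Definition~\ref{defi:translinvrep}, so $\{J_N\}$ defines a translation invariant $\cF(\R,\mathscr{J})$-valued representation in the sense of Section~\ref{sec:representations}.

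With these identifications, the moduli spaces $\cM^N(\og,\ug;H,J)$ of Section~\ref{sec:explicit} coincide with the moduli spaces $\cM^N(\og,\ug;H,J,\rho)$ of Section~\ref{sec:reformulate}. The maps $\varphi_N$ defined by~\eqref{eq:phiNrep} are thus precisely those obtained from the general construction, and the conclusion that they define an $S^1$-structure and that the corresponding $S^1$-equivariant homology groups are $SH_*^{S^1}(H)$ is a direct application of the discussion at the end of Section~\ref{sec:reformulate}, itself a consequence of the contractibility of the space $\mathscr{R}ep\mathscr{J}$ (hence independence of $J$ by a standard homotopy-of-homotopies argument) and of the fact that our explicit data satisfy conditions~(\ref{item:1})-(\ref{item:4}) of Section~\ref{sec:simplifying}.

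The main technical point, where most of the work actually lies, is the verification of the Gluing Axiom: one has to check that as $L_i\to\infty$, solutions of~\eqref{eq:utauL} with asymptotes $\og,\ug$ degenerate into broken configurations made of solutions on each stratum $\cM(k_{\ell-i+1},k_{\ell-i})$ of the Morse-Floer compactification of $\cM(N,0)$, and that both the Hamiltonian $H_{\utau,\uL}$ and the almost complex structure $J_N^{\utau,\uL}$ restrict correctly on each subinterval to the corresponding lower-level data under the inclusions $i_{(k_{\ell-i+1},k_{\ell-i})\to(N,0)}$. This is carried out by a Floer-theoretic compactness-gluing argument, inspecting the dependence of $t_i(s)$ on the individual gluing parameters $L_i$ and using the inductive structure of $\{J_N\}$; the nontrivial verification is bookkeeping of indices and shifts $\tau_i$, while the analytic input (standard compactness and gluing for continuation equations) is classical.
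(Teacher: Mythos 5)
Your proposal is correct and follows precisely the approach the paper intends: the Proposition is stated as a direct consequence of \S\ref{sec:reformulate} (the paper's "proof" is the single sentence ``The arguments in \S\ref{sec:reformulate} prove the following'' preceding the Proposition), and you correctly identify the cutoff family $\{t_i\}$ with the representation of Example~\ref{ex:2}, the sequences in $\mathscr{R}ep\mathscr{J}$ with translation-invariant $\cF(\R,\mathscr{J})$-valued representations, and then invoke the tautological coincidence with \S\ref{sec:simplifying} already established at the end of \S\ref{sec:reformulate}. Your added emphasis on the Gluing Axiom as the technical crux is accurate and consistent with what the abstract framework requires.
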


Passing to the limit over $H\in\cH$ we recover the groups $SH_*^{S^1}(W)$. The groups $SH_*^{S^1,\pm}(W)$ are defined using special Hamiltonians as in~\S\ref{sec:simplifying}.

\section{Linearized contact homology} \label{sec:CHlin}

We continue in this section with the notation of~\S\ref{sec:S1}. Unless otherwise specified $(W,\omega)$ denotes a Liouville domain with torsion first Chern class. The primitive of $\omega$ is denoted $\lambda$, its restriction to $M=\p W$ is denoted $\alpha$, and the primitive of $\hat\omega$ on the symplectic completion $\hat W$ is denoted $\hat\lambda$.
 
In~\S\ref{sec:CH-Ham} we recast linearized contact homology groups in a Hamiltonian formulation following~\cite{BOcont}. In~\S\ref{sec:S1-CH} we prove Theorem~\ref{thm:isomorphism}, stating that the linearized contact homology groups are functorially isomorphic over $\Q$ with the $S^1$-equivariant symplectic homology groups.
In~\S\ref{sec:CHlin-defi} we define filled and/or non-equivariant linearized contact homology groups. In contrast to~\S\ref{sec:S1-CH} which is algebraic in nature, we need geometric methods in~\S\ref{sec:CHlin-defi} in order to construct suitable chain homotopies.  

We work in this section under ideal transversality assumptions that guarantee well-definedness of linearized contact homology for particular choices of contact form and almost complex structure (see~\S\ref{sec:linconthom}). In particular, we do not appeal to abstract perturbations like the ones used in the polyfold theory of Hofer, Wysocki and Zehnder~\cite{Hofer-Polyfolds-survey,HWZ-polyfolds-I,HWZ-polyfolds-II,HWZ-polyfolds-III}. While our assumptions severely restrict the class of Liouville domains to which the current section applies, the fact that linearized contact homology is in general not a well-defined theory constitutes indeed one of the motivations for our Theorem~\ref{thm:isomorphism}. The latter can be read as an existence result: the positive part of $S^1$-equivariant symplectic homology is well-defined~\cite{BOtransv} and provides therefore an alternative to linearized contact homology within a ``classical'' transversality scheme.

The main result of this section is Proposition~\ref{prop:isomorphism}, which corresponds to Theorem~\ref{thm:isomorphism} in the Introduction. As already noted in the Introduction, its topological counterpart is Lemma~\ref{lem:Leray}, which we restate here for the reader's convenience. 

\noindent {\bf Lemma~\ref{lem:Leray}.}
{\it Let $X$ be a manifold and assume $Stab(x)$ is finite at every point $x\in X$. Then the canonical projection $X_{Borel}\to X/S^1$ induces an isomorphism in homology with $\Q$-coefficients. 
}

\begin{remark} {\it The assumption is equivalent to the fact that the $S^1$-action has no fixed points. The core of the proof is that each $Stab(x)$ is a finite cyclic group $\Z/k\Z$, $k\ge 1$, and the latter have vanishing rational homology in positive degrees. 
}
\end{remark}

\begin{proof}[Proof of Lemma~\ref{lem:Leray}]
It is enough to prove the same statement in cohomology with $\Q$-coefficients. We use the Leray spectral sequence for the map 
$$
pr_1:X_{Borel}\to X/S^1.
$$ 
This spectral sequence converges to $H^*(X_{Borel})$ and its second page is $E_2^{p,q}=H^p(X/S^1;\cH^q)$, with $\cH^q$ the sheaf generated by the presheaf $V\mapsto
 H^q(pr_1^{-1}(V);\Q)$ \cite[Thm.~4.17.1]{Godement}. The stalk $\cH^q_{[x]}$ at $[x]$ is $H^q(pr_1^{-1}([x]);\Q)=H^q(BStab(x);\Q)$. 
 The assumption that all stabilizers are finite implies $H^q(BStab(x);\Q)=0$ for $q>0$, and therefore $\cH^q=0$ for $q>0$. Thus
 $E_2^{p,q}$ vanishes for $q>0$, while for $q=0$ it
 is isomorphic to $H^p(X/S^1;\Q)$. The spectral
 sequence degenerates at $E_2$ for dimensional reasons, and the conclusion follows. 
\end{proof}

The content of Lemma~\ref{lem:Leray} is that, for all purposes cohomological, the map $X_{Borel}\to X/S^1$ that naturally fits into the diagram 

$$
\xymatrix
{ 
BStab(x)\ar[r] & X_{Borel} \ar[d]^{pr_1}  
\\
& [x]\in X/S^1 \qquad}
$$
behaves like a locally trivial fibration in case the action has no fixed points, provided one uses rational coefficients. The statement of Theorem~\ref{thm:isomorphism} is then motivated as follows. Both $SH_*^{+,S^1}(W)$ and $CH_*^{lin}(M)$ are constructed from closed characteristics on $M=\p W$, the first as an $S^1$-equivariant theory, the second as a quotient theory. Since the closed characteristics on $M$ are nonconstant, the action of $S^1$ by reparametrization has no fixed points and, in view of Lemma~\ref{lem:Leray}, the two homology groups should be isomorphic with $\Q$-coefficients. Moreover, the isomorphism should be given by a natural chain map analogous to the first projection $X_{Borel}\to X/S^1$. This is precisely what we will achieve within the next three sections. 

Let us make a final remark regarding the generality of the results that we prove in this section. In order to make our arguments conceptual we chose to work under the restrictive but general transversality assumptions stated below in~\S\ref{sec:linconthom}. However, the isomorphism between positive $S^1$-equivariant symplectic homology and linearized contact homology holds in more general situations, in which the transversality assumptions below are not satisfied but linearized contact homology is defined (i.e. the differential is defined and squares to zero) for specific choices of contact forms and almost complex structures. 
We know of two such instances: (i) the linearized contact complex is lacunary, as in Ustilovsky's work on Brieskorn spheres~\cite{Ustilovsky}; (ii) the linearized contact complex can be identified with a Morse complex, as in Mei-Lin Yau's work on boundaries of subcritical Stein manifolds~\cite{MLYau}. We refer to~\S\ref{sec:applications} for a more detailed discussion of these examples.

\subsection{Hamiltonian formulation of linearized contact homology} \label{sec:CH-Ham}

We summarize in this section some definitions and results from~\cite{BOcont}, and explain in particular how to recast linearized contact homology in Hamiltonian terms.

\subsubsection{Definition of linearized contact homology} \label{sec:linconthom} We follow~\cite[\S3.1]{BOcont}. Let $(W, \omega=d\lambda)$ be a Liouville domain with boundary $(M, \xi=\ker\,\alpha)$, $\alpha:=\lambda|_M$ and with torsion first Chern class. We assume that $\alpha$ is nondegenerate in the sense that all its closed Reeb orbits are transversally nondegenerate. We denote the set of equivalence classes of closed Reeb orbits modulo reparametrization which belong to a given free homotopy class $c$ in $\hat W$ by $\cP^c(\alpha)$. We use the generic notation $\gamma'$ for elements of $\cP^c(\alpha)$ and we denote by $0$ the trivial free homotopy class. 

Let $c$ be a fixed free homotopy class in $\hat W$. To define linearized contact homology in the class $c$ we assume the existence of an almost complex structure $J\in\cJ$ (cf.~\S\ref{sec:S1-Borel}) that satisfies the following two regularity conditions:
\begin{enumerate} \label{pagenumber:transv}
\item[($A$)] $J$ is regular for holomorphic planes in $\hat W$ asymptotic to some $\gamma'\in\cP^0(\alpha)$ and which belong to moduli spaces -- denoted $\cM(\gamma',\emptyset;J)$ -- of virtual dimension $\le 1$;
\item[($B_c$)] let $J_\infty$ be the almost complex structure on the symplectization $(\R_+^*\times M, d(r\alpha))$ which is determined by the restriction of $J$ to the complement of a large enough compact set inside $\hat W$. One requires $J_\infty$ to be regular for punctured holomorphic cylinders which belong to moduli spaces of virtual dimension $\le 2$, asymptotic at $\pm\infty$ to arbitrary elements of $\cP^c(\alpha)$ and asymptotic at the punctures to elements $\gamma'\in\cP^0(\alpha)$ such that there exists a $J$-holomorphic building of type $0|1|k_+$, $k_+\ge 0$ in the sense of~\cite[\S8.1]{BEHWZ} with exactly one positive puncture and asymptote $\gamma'$. 
\end{enumerate}
These conditions are identical to the conditions formulated in~\cite{BOcont-err}, which correct the ones that we had previously formulated in~\cite[Remark~9]{BOcont}. That the latter were insufficient for linearized contact homology to be defined was pointed out to us independently by Janko Latschev and Mohammed Abouzaid. We refer to~\cite{BOcont-err} for a detailed discussion. 

The \emph{linearized contact homology group in the free homotopy class $c$} is a $\Z$-graded $\Q$-vector space denoted $CH_*^{lin;c}(M,\alpha,J)$ and defined as follows (see~\cite[\S3]{BOcont}). Given a trivialization of $T\hat W$ along a simple closed Reeb orbit $\gamma'$, the transverse Conley-Zehnder indices of the iterates of $\gamma'$ either all have the same parity, or the parity differs between the odd and the even iterates~\cite[\S5]{Bourgeois-Mohnke}, \cite[\S4.4]{BOauto}. In the second case, the even iterates are called \emph{bad}. The closed Reeb orbits which are not bad are called \emph{good}. The group $CH_*^{lin;c}(M,\alpha,J)$ is the homology of a complex $(CC_*^{lin;c}(\alpha,J),\p)$ whose underlying graded $\Q$-vector space is generated by good elements of $\cP^c(\alpha)$. The grading is defined by first choosing a reference loop in the free homotopy class $c$ and a trivialization of $T\hat W$ along that loop, and then associating to each element $\gamma'\in\cP^c(\alpha)$ its Conley-Zehnder index $\mu(\gamma')\in\Z$ with respect to the trivialization induced by a homotopy to the reference loop~\cite[p.~626]{BOcont}. 

To define the differential on $CC_*^{lin;c}(\alpha,J)$ we proceed as follows. We fix a reference point on the geometric image of each closed Reeb orbit. Given $\og',\ug'\in\cP^c(\alpha)$ and $\gamma'_1,\dots,\gamma'_k\in\cP^0(\alpha)$, we denote by $\cM(\og',\ug',\gamma'_1,\dots,\gamma'_k;J)$ the moduli space of punctured $J_\infty$-holomorphic cylinders endowed with asymptotic tangent directions at the punctures, with asymptotes $\og',\ug',\gamma'_1,\dots,\gamma'_k$ and converging to the reference points along the corresponding tangent directions. Note that, unless $\og'=\ug'$ and $k=0$, the multiplicative group $\R_+^*$ acts freely on $\cM(\og',\ug',\gamma'_1,\dots,\gamma'_k;J)$ by homotheties in the target. The moduli space $\cM(\og',\ug',\gamma'_1,\dots,\gamma'_k;J)$ has virtual dimension $\mu(\og')-\mu(\ug')-\sum_{i=1}^k\mu(\gamma'_i)-k(n-3)$. Similarly, the virtual dimension of the moduli space $\cM(\gamma',\emptyset;J)$ of $J$-holomorphic planes asymptotic to $\gamma'\in\cP^0(\alpha)$ and converging to the reference point along the asymptotic tangent direction at infinity is $\mu(\gamma')+n-3$. The dimension formulas take a simpler form if we define  $\omu(\gamma'):=\mu(\gamma')+n-3$. 

Denote by $\kappa_{\gamma'}\in\N^*$ the multiplicity of a closed Reeb orbit representing $\gamma'$. Let $n(\og',\ug',\gamma'_1,\dots,\gamma'_k)$ be the signed count of elements in the quotient moduli space 
$\cM(\og',\ug',\gamma'_1,\dots,\gamma'_k;J)/\R_+^*$ for $\mu(\og')-\mu(\ug')-\sum\omu(\gamma'_i)=1$. Define a linear map $e:CC_*^{lin;c}(\alpha,J)\to\Z$ by assigning to each $\gamma'$ the signed count of elements in $\cM(\gamma',\emptyset;J)$ provided $\omu(\gamma')=0$, and zero otherwise. Define the \emph{linearized contact differential}Ê\ $\p:CC_*^{lin;c}(\alpha,J)\to CC_{*-1}^{lin;c}(\alpha,J)$ on the generators by the formula 
$$
\p\og'=\sum_{\scriptsize\begin{array}{c}\ug',\gamma'_1,\dots,\gamma'_k\\ \mu(\og')-\mu(\ug')-\sum\omu(\gamma'_i)=1\end{array}} \frac{n(\og',\ug',\gamma'_1,\dots,\gamma'_k)}{\kappa_{\ug'}\prod_{i=1}^k\kappa_{\gamma'_i}} e(\gamma'_1)\dots e(\gamma'_k)\ug'.
$$ 

Since we use asymptotic markers and work under ideal transversality assumptions, the definition of the differential 
is such that the linear sum that expresses it has integer coefficients. However, it is expected that linearized contact homology is well-defined in general only with $\Q$-coefficients because one needs to use multivalued perturbations. In order to emphasize this expectation we choose to use $\Q$-coefficients from the start. 

We denote $CH_*^{lin;c}(M,\alpha,J):=H_*(CC_*^{lin;c}(\alpha,J),\p)$ and $CH_*^{lin;c;\le a}(M,\alpha,J):=H_*(CC_*^{lin;c;\le a}(\alpha,J),\p)$, where $CC_*^{lin;c;\le a}(\alpha,J)$ is the subcomplex generated by elements of $\cP^c(\alpha)$ with action $\le a$. The polyfold theory of Hofer, Wysocki, Zehnder~\cite{Hofer-Polyfolds-survey,HWZ-polyfolds-I,HWZ-polyfolds-II,HWZ-polyfolds-III} should eventually prove that $CH_*^{lin;c}(M,\alpha,J)$ does not depend neither on the choice of $J$, nor on the choice of $\alpha$, whereas $CH_*^{lin;c;\le a}(M,\alpha,J)$ does not depend on the choice of $J$. To simplify the notation we denote in the sequel these homology groups by $CH_*^{lin;c}(M)$, respectively by $CH_*^{lin;c;\le a}(M,\alpha)$.

\smallskip 

\noindent {\bf Notational convention.}Ê\ \emph{In the sequel we do not specify anymore the free homotopy class $c$ in the notation for the linearized contact homology groups. It is understood that we fix a collection $\cC$ of such classes such that $CH_*^{lin;c}(M,\alpha,J)$ is defined for all $c\in \cC$ and for some common $J\in\cJ$, and we work with the direct sum $\oplus_{c\in\cC} CH_*^{lin;c}(M,\alpha,J)$.}

\subsubsection{Hamiltonian formulation}\label{sec:Hamiltformul}
 Recall the space $\cH$ of Hamiltonians in~\S\ref{sec:S1-Borel}. 
For each $a>0$ not in $\mathrm{Spec}(M,\alpha)$ we define $\cH_0^a\subset\cH$ to consist of Hamiltonians $H\in\cH$ which satisfy the following properties:
\begin{itemize}
\item $H$ is time-independent;
\item $H$ is a negative Morse function on $W$ such that all its $1$-periodic orbits starting in $W$ are constant and have action smaller than the smallest action of a closed Reeb orbit on $M=\p W$; 
\item $H=h(r)$ for $r\ge 1$, with $h:[1,\infty[\to\R$ a smooth function such that $h'>0$, $h''\ge 0$, $h(r)=ar+b$ for $r$ large enough, and $h''(r)>0$ if $h'(r)\in\mathrm{Spec}(M,\alpha)$.  
\end{itemize}
The $1$-periodic orbits of a Hamiltonian $H\in\cH_0^a$ are either nondegenerate critical points of $H$ inside $W$, or transversally nondegenerate $1$-periodic orbits on levels $r=const$ such that $h'(r)\in\mathrm{Spec}(M,\alpha)$. We denote by $\cP(H)$ the set of all $1$-periodic orbits, and we denote by $\cP^+(H)$ the set of all nonconstant $1$-periodic orbits, so that $\cP(H)=\cP^+(H)\cup \mathrm{Crit}(H)$. Since $X_h=-h'(r)R_\alpha$, the elements of $\cP^+(H)$ are in one-to-one bijective correspondence with closed Reeb orbits of period $<a$. Any $\gamma\in\cP^+(H)$ determines by reparametrization a Morse-Bott nondegenerate circle of $1$-periodic orbits, which we denote $S_\gamma$. 

Given $J\in\cJ$, $\og,\ug\in\cP^+(H)$ and $p\in\mathrm{Crit}(H)$ we denote by $\cM(S_\og,S_\ug;H,J)$, respectively by $\cM(S_\og,p;H,J)$ the moduli space of solutions $u:\R\times S^1\to \hat W$ of Floer's equation $\p_s u+J(\p_\theta u-X_H)=0$ such that $\lim_{s\to-\infty}u(s,\cdot)\in S_\og$ and $\lim_{s\to\infty}u(s,\cdot)\in S_\ug$, respectively $\lim_{s\to\infty}u(s,\cdot)=p$. 

Let $c$ be a free homotopy class of loops in $\hat W$ for which the transversality assumptions in~\S\ref{sec:linconthom} hold. Then the proofs of~\cite[Lemma~2 and Proposition~3]{BOcont} adapt to show that for each $a>0$ there exists $H\in\cH_0^a$ such that the almost complex structure $J$ is regular for all solutions of the Floer equation for $(H,J)$ that belong to moduli spaces of virtual dimension $\le 2$ with asymptotes in the class $c$. Moreover, for such an $H$ the function $h$ can be chosen such that $h''\ge 0$ is arbitrarily small. In particular, for any $a<a'$ one finds such Hamiltonians $H^a\in\cH_0^a$ and $H^{a'}\in\cH_0^{a'}$ with $H^a<H^{a'}$ by first picking $H^{a'}$ and then choosing $H^a$.

\noindent {\bf Notational convention.} {\it In the sequel we work in such a fixed free homotopy class $c$ (or collection $\cC$ of free homotopy classes). However, for readability we do not specify this data in the notation.}

To associate an index to the elements of $\cP^+(H)$, we recall our choice of a reference loop in each free homotopy class of loops in $\hat W$ and of trivializations of $T\hat W$ along these reference loops. For the class of contractible loops we choose a constant loop and a constant trivialization. Given $\gamma\in\cP^+(H)$ with underlying closed Reeb orbit $\gamma'$, the Robbin-Salamon index of the linearized Hamiltonian flow along $\gamma$, denoted $\mu_{RS}(\gamma)$, is a half-integer equal to $-\mu(\gamma')-\frac 1 2$~\cite[Lemma~3.4]{BOauto}. We define the grading 
$$
|\gamma|:=\mu(S_\gamma):=\mu(\gamma').
$$ 
The Conley-Zehnder index of the linearized Hamiltonian flow at $p\in\mathrm{Crit}(H)$, denoted $\mu(p)$, is equal to $\ind(p;-H)-n$, where $\ind(p;-H)$ denotes the Morse index of $p$ with respect to $-H$~\cite[Lemma~7.2]{SZ92}. 
The virtual dimensions of the moduli spaces $\cM(S_\og,S_\ug;H,J)$ and $\cM(S_\og,p;H,J)$ are then expressed by the formulas~\cite[Proposition~3.5]{BOauto}
$$
\dim\,\cM(S_\og,S_\ug;H,J)=\mu(S_\og)-\mu(S_\ug), \qquad \dim\,\cM(S_\og,p;H,J)=\mu(S_\og)-\mu(p).
$$

In case $S_\og\neq S_\ug$ the moduli spaces $\cM(S_\og,S_\ug;H,J)$ carry the canonical $S^1$-action $(\tau\cdot u)(s,\cdot):=u(s,\cdot-\tau)$, $\tau\in S^1$. We define the chain complex 
$$
SC_k^{+,inv}(H,J)=\bigoplus_{\scriptsize\begin{array}{c} \gamma\in\cP^+(H) \\ \gamma' \mbox{ good and }Ê\mu(S_\gamma)=k \end{array}}\Z\langle S_\gamma\rangle
$$
with differential $\p:SC_*^{+,inv}(H,J)\to SC_{*-1}^{+,inv}(H,J)$ given by 
\begin{equation} \label{eq:differentialSH+inv}
\p \langle S_\og \rangle:=\sum_{\scriptsize\begin{array}{c} \ug' \mbox{ good} \\ \mu(S_\og)=\mu(S_\ug)+1\end{array}} \kappa_{\og'} \#\cM(S_{\og},S_\ug;H,J)/S^1\langle S_\ug\rangle.
\end{equation}
Here we denoted by $\#\cM(S_\og,S_\ug;H,J)/S^1$ the signed count of elements in the $0$-dimensional quotient of the moduli space by the $S^1$-action, with signs being determined by the coherent orientation scheme in~\cite[\S4.4]{BOauto}. Our definition of $\p$ is such that it coincides with $\bar d^1$ on the first line of the first page of the spectral sequence $(E^r_d,\bar d^r)$ of~\cite[Corollary~1]{BOcont}. In particular, it satisfies $\p^2=0$ and we denote 
$$
SH_*^{+,inv}(H,J):=H_*(SC_*^{+,inv}(H,J),\p).
$$
Given $H^a, K^a \in\cH_0^a$ such that $H^a\ge  K^a$ and $J$ is regular for both $H^a$ and $K^a$ as above, there is a well-defined continuation map $\sigma_{K^aH^a}:SH_*^{+,inv}(K^a,J)\to SH_*^{+,inv}(H^a,J)$ induced by an autonomous increasing homotopy~\cite[Remark~15]{BOauto}, and this continuation map 
is an isomorphism. As a consequence, given $H^a\in\cH_0^a$ and $H^{a'}\in\cH_0^{a'}$ with $a<a'$ and such that $J$ is regular for both of them as above -- so that $H^a<H^{a'}$ holds \emph{a priori} only in a neighborhood of infinity --, we can define a \emph{generalized continuation map} 
$\bar\sigma_{H^aH^{a'}}:SH_*^{+,inv}(H^a,J)\to SH_*^{+,inv}(H^{a'},J)$ by choosing $K^a\in \cH_0^a$ such that $K^a\le H^a$ and $K^a \le H^{a'}$ and setting $\bar\sigma_{H^aH^{a'}}:=\sigma_{K^aH^{a'}}\circ \sigma_{K^aH^a}^{-1}$. We define then 
$$
SH_*^{+,inv}(\alpha,J):=\lim_{\stackrel\longrightarrow a} SH_*^{+,inv}(H^a,J), 
$$
the direct limit as $a\to\infty$ being taken with respect to the generalized continuation maps defined above. The independence of $SH_*^{+,inv}(\alpha,J)$ on the choice of $\alpha,J$ is of course subject to the same caveats as in the case of linearized contact homology and would be a consequence of polyfold theory~\cite{Hofer-Polyfolds-survey,HWZ-polyfolds-I,HWZ-polyfolds-II,HWZ-polyfolds-III}. To simplify the notation we denote in the sequel these homology groups $SH_*^{+,inv}(W)$ and call them \emph{invariant symplectic homology truncated to positive values of the action} or, for short, \emph{positive invariant symplectic homology}. 

Corollary~1 in~\cite{BOcont} ensures that given $H^a\in\cH_0^a$ such that $J$ is regular for $H^a$ as above, we have an isomorphism 
\begin{equation*} 
I_{H^a}:CH_*^{lin;\le a}(\alpha,J)\stackrel \sim \longrightarrow SH_*^{+,inv}(H^a,J).
\end{equation*}

For the reader's convenience, we recall now from~\cite[\S6]{BOcont} the definition of $I_{H^a}$ and the reason why it is an isomorphism. The map $I_{H^a}$ is defined at chain level by a signed count of elements in moduli spaces 
\begin{equation} \label{eq:MI}
\cM_I(\og',S_\ug), \qquad \og'\in\cP(\alpha),\ \ug\in\cP^+(H^a)
\end{equation}
that are defined as follows. We consider an increasing map $\rho:\R\to[0,1]$ such that $\rho(s)=0$ for $s\ll 0$ and $\rho(s)=1$ for $s\gg 0$. We denote $H^{a;\rho}(s,x)=\rho(s) H^a(x)$. We then define $\cM_I(\og',S_\ug)$ to consist of maps $u:\R\times S^1\to \hat W$ solving $\p_s u +J(\p_\theta u-X_{H^{a;\rho}}(u))=0$, such that $u$ is asymptotic at $-\infty$ to $\og'$ and converges to the reference point of $\og'$ along the tangent direction $\R\times \{0\}$, and such that $u(+\infty,\cdot)\in S_\ug$. The virtual dimension of $\cM_I(\og',S_\ug)$ is equal to $\mu(\og')-|\ug|$. A standard argument shows that we thus define a chain map. Note that in~\cite[\S6]{BOcont} the map $I_{H^a}$ is defined after having performed a preliminary neck-stretching along the boundary, so that the definition involves moduli spaces of capped punctured holomorphic curves in the symplectization that are analogous to the moduli spaces $\cM_I(\og',S_\ug)$ above. In the current setup we do not stretch the neck so that the moduli spaces that define $I_{H^a}$ do not involve additional punctures. 

The map $I_{H^a}$ is seen to be an isomorphism at chain level as follows. The elements of $\cP(\alpha)$ with action $\le a$ are naturally in one-to-one bijective correspondence with the elements of the set $\cP^+(H^a)/S^1=\{S_\gamma\, : \, \gamma\in \cP^+(H^a)\}$ (we recall that we work in some collection $\mathcal{C}$ of free homotopy classes which we omit from the notation). These elements provide natural bases for the chain complexes $CC_*^{lin;\le a}(\alpha,J)$ and $SC_*^{+,inv}(H^a,J)$ and one can easily see that $I_{H^a}$ decreases the action. Thus, if we order the generators by the action and in such a way that the above correspondence is respected, the map $I_{H^a}$ is represented at chain level by an upper triangular matrix. Finally, one sees by an explicit argument that the diagonal entries in this matrix are $\pm 1$, so that $I_{H^a}$ is an isomorphism at chain level.

We showed in~\cite[Proposition~7]{BOcont} that the maps $I_{H^a}$ are compatible with the natural continuation morphisms given by enlarging the action window on the contact side, respectively given by increasing homotopies on the Hamiltonian side. This implies in particular that we have an isomorphism 
$$
I:CH_*^{lin}(\alpha,J)\stackrel\sim\longrightarrow SH_*^{+,inv}(\alpha,J).
$$
In a different notation, we have an isomorphism $I:CH_*^{lin}(M)\stackrel\sim\longrightarrow SH_*^{+,inv}(W)$. 
This completes our alternative description of the linearized contact homology groups.

\begin{remark} \label{rmk:Theta} {\it Instead of using the expression~\eqref{eq:differentialSH+inv}, one can define a differential $\p'$ on $SC_*^{+,inv}(H,J)$ by the formula  
$$
\p' \langle S_\og \rangle := \sum_{\scriptsize\begin{array}{c} \ug' \mbox{ good} \\ \mu(\og')=\mu(\ug')+1\end{array}} \kappa_{\ug'} \#\cM(S_{\og},S_\ug;H,J)/S^1\langle S_\ug\rangle.
$$
The definition of $\p'$ is such that it coincides with the expression of $\bar d^1$ on the second line of the first page of the spectral sequence $(E^r_d,\bar d^r)$ of~\cite[Corollary~1]{BOcont}, so that $(\p')^2=0$. The differentials $\p$ and $\p'$ are in general not equal, yet they are conjugated via the automorphism $\Theta$ of $SC_*^{+,inv}(H,J)$ given by $S_\gamma\mapsto \frac 1 {\kappa_\gamma} S_\gamma$, so that $\p'=\Theta^{-1}\circ \p \circ \Theta$.  The automorphism $\Theta$ is only defined over $\Q$ and the resulting homology groups are isomorphic over $\Q$.}
\end{remark}

\subsubsection{Gysin exact sequence for positive invariant symplectic homology} \label{sec:GysinSH+inv}

The long exact sequence of~\cite[Theorem~1]{BOcont} relating $SH_*^+(W)$ and $CH_*^{lin}(M)$ takes the form of an exact triangle (recall that we work with some collection $\mathcal{C}$ of free homotopy classes in $W$ that we omit from the notation)
\begin{equation} \label{eq:lesGysinSH+inv}
\xymatrix
@C=20pt
{
SH_*^+(W) \ar[rr] & & 
SH_*^{+,inv}(W) \ar[dl]^D \\ & SH_{*-2}^{+,inv}(W) \ar[ul]^{[+1]}  
}
\end{equation}

Note that, because of the fact that the conjugating automorphism $\Theta$ of Remark~\ref{rmk:Theta} is only defined over $\Q$, this exact triangle, just like the one in~\cite[Theorem~1]{BOcont}, is also only defined over $\Q$. For the purposes of the present paper the explicit expression of $D$ is not important and the only thing that will matter is the formal structure of the complex $SC_*^+(H,J)$ that computes $SH_*^+(H,J)$ for an autonomous Hamiltonian $H\in\cH_0^a$. More precisely, the latter has the form
\begin{equation} \label{eq:SC*+HJ}
SC_*^+(H,J):=\bigoplus_{\scriptsizeÊS_\gamma\subset \cP^+ (H)} \langle \gamma_m,\gamma_M\rangle, 
\end{equation}
where $\gamma_m,\gamma_M\in S_\gamma$ are two $1$-periodic orbits which we view as the two critical points -- minimum $m$ and maximum $M$ -- of a perfect Morse function $f_\gamma:S_\gamma\to \R$. Their gradings are respectively $\mu(\gamma_M):=\mu(\gamma)$ and $\mu(\gamma_m):=\mu(\gamma)+1$. The complex is filtered by $\mu(\gamma)$ and, with respect to this filtration, the differential $d$ on $SC_*^+(H,J)$ can be written 
$$
d=d^0+d^1+d^2.
$$
The various components $d^i$, $i=0,1,2$ act as follows (in the remainder of this section we write $SC_*^+$ instead of $SC_*^+(H,J)$ for readability): 
\begin{itemize}
\item the component $d^0$ satisfies 
$$
d^0(\gamma_M)=0,\qquad d^0(\gamma_m)=\left\{\begin{array}{rl}Ê0, & \mbox{if } \gamma \mbox{ is good,}Ê\\Ê\pm 2 \gamma_M, & \mbox{if } \gamma \mbox{ is bad}. \end{array}\right. 
$$ 
\item denote $Cm_*:=\bigoplus \langle \gamma_m\rangle$, $CM_*:=\bigoplus\langle \gamma_M\rangle$, and denote by $Cm_*^{bad/good}$, $CM_*^{bad/good}$ the respective subspaces generated by $\gamma_m$, $\gamma_M$ with $\gamma$ bad/good. Then $d^1(CM_*^{bad})=0$, whereas $d^1(Cm_*)\subset Cm_*^{good}$. Moreover, through the obvious identifications $CM_*/CM_*^{bad}\simeq SC_*^{+,inv}$ and $Cm_*^{good}\simeq SC_{*-1}^{+,inv}=:SC_*^{+,inv}[-1]$ we have (see also Remark~\ref{rmk:Theta})
$$
d^1|_{CM_*/CM_*^{bad}}\simeq \p, \qquad d^1|_{Cm_*^{good}}\simeq \p'=\Theta^{-1}\circ\p\circ\Theta.
$$
\item the component $d^2$ vanishes on $Cm_*$ and $d^2(CM_*)\subset Cm_*$.
\end{itemize}

It is proved in~\cite[\S7.1]{BOcont} that the spectral sequence associated to the filtration by $\mu(\gamma)$ on $SC_*^+$ is supported on two lines and degenerates at the third page for dimensional reasons. Modulo taking into account the isomorphism $\Theta$, the resulting exact triangle is precisely~\eqref{eq:lesGysinSH+inv} and the map $D$ is induced by $d^2$ and $d^1$. Moreover, this exact triangle is isomorphic to~\eqref{eq:Gysin-contact}.

\subsection{Isomorphism with $S^1$-equivariant symplectic homology}  
\label{sec:S1-CH}
What we actually prove in this section is that positive $S^1$-equivariant symplectic homology is functorially isomorphic over $\Q$ to positive invariant symplectic homology. Here ``functoriality'' is understood with respect to the Gysin exact triangles~\eqref{eq:Gysintriangle} and~\eqref{eq:lesGysinSH+inv} and with respect to the Viterbo transfer map. 

Let $H\in\cH_0^a$ and $J\in\cJ$ be such that the pair $(H,J)$ is regular as in~\S\ref{sec:CH-Ham} (for some fixed free homotopy class). In particular, the complex $SC_*^+(H,J)$ has the form~\eqref{eq:SC*+HJ}. Recall also the definition of the $S^1$-equivariant complex in~\S\ref{sec:simplifying}
$$SC_*^{+,S^1}(H,J)=\Z[u]\otimes_\Z SC_*^+(H,J),$$ 
whose differential acts by $\p^{S^1}(u^\ell \gamma_p)=\sum_{N=0}^\ell u^{\ell-N}\varphi_N(\gamma_p)$ with $\varphi_N:SC_*^+(H,J)\to SC_{*+2N-1}^+(H,J)$ and $\varphi_0=d$, the differential on $SC_*^+(H,J)$.

We use the language of~\S\ref{sec:explicit} to describe the maps $\varphi_N$, $N\ge 1$. Given a small time-dependent perturbation $K$ of the Hamiltonian $H$ we have defined the Hamiltonian $K_{\utau,\uL}(s,\theta,x):=\sum_{i=0}^Nt_i(s) K^{\theta-\tau_i}(x)$. Let also $J_N^{\utau,\uL}$, $(\utau,\uL)\in(S^1)^N\times ([0,\infty[)^{N-1}$ be a generic family  of $(s,\theta)$-dependent almost complex structures satisfying for large enough values of $s,L_1,\dots,L_{N-1}$ the compatibility conditions listed in~\S\ref{sec:explicit}. The maps $\varphi_N$ are defined by counting with signs the elements of the moduli spaces $\cM^N(\og,\ug;K,J_N)$, $\og,\ug\in\cP(K)$ defined by~\eqref{eq:utauL}. 

If the perturbation $K$ is of Morse-Bott type as in~\cite[\S3]{BOauto}\, these moduli spaces admit the following alternative description similar to that of ``moduli spaces of $s$-dependent Morse-Bott broken trajectories'' in~\cite[p.~624]{BOcont}. For each $S_\gamma\subset \cP^+(H)$ we choose a perfect Morse function $f_\gamma:S_\gamma\to\R$ and denote $\varphi_{f_\gamma}$ the positive gradient flow of $f_\gamma$. (Since we use \emph{positive} gradient flows, minima are understood to have index $1$ and maxima are understood to have index $0$.) Given $\og,\ug\in\cP^+(H)$, $p\in\mathrm{Crit}(f_\og)$, $q\in\mathrm{Crit}(f_\ug)$, $\ell\ge 1$, and $i\in\{1,\dots,\ell\}$ we define 
$$
\cM^N_\ell(p,q;H,\{f_\gamma\},J_N)
$$
to consist of the disjoint union over $\tilde\gamma_1,\dots,\tilde\gamma_{\ell-1}\in\cP^+(H)$ of tuples $(\u,\utau,\uL)$ such that $(\utau,\uL)\in(S^1)^N\times([0,\infty[)^{N-1}$ and $\u$ is an element of the fiber product (with the convention $\tilde\gamma_0:=\og$, $\tilde\gamma_\ell:=\ug$)
\begin{eqnarray*} 
&& 
W^u(p)  
\times_{\oev} 
(\cM(S_{\tgamma_0},S_{\tgamma_1};H,J_0^\theta)\!\times\!\R^+) \\ 
&& {_{\varphi_{f_{\tgamma_1}}\!\circ\uev}}\!\times _{\oev}  
\ldots {_{\varphi_{f_{\tgamma_{i-2}}}\!\circ\uev}}\!\times _{\oev}  
(\cM(S_{\tgamma_{i-2}},S_{\tgamma_{i-1}};H,J_0^\theta)\!\times\!\R^+) \\ 
&&  
{_{\varphi_{f_{\tgamma_{i-1}}}\!\circ\uev}\times_{\oev}}  
(\cM^N(S_{\tgamma_{i-1}},S_{\tgamma_i};H,J_N)\!\times\!\R^+) \\ 
&&{_{\varphi_{f_{\tgamma_i}}\!\circ\uev}\times_{\oev}}  
(\cM(S_{\tgamma_i},S_{\tgamma_{i+1}};H,J_0^{\theta-\tau_0})\!\times\!\R^+) \\ 
&&  
{_{\varphi_{f_{\tgamma_{i+1}}}\!\circ\uev}\times_{\oev}} \ldots\, 
{_{\varphi_{f_{\tgamma_{\ell-1}}}\!\!\circ\uev}}\!\!\times 
_{\oev}   
\cM(S_{\tgamma_{\ell-1}},\!S_{\tgamma_\ell};H,J_0^{\theta-\tau_0})  
{_{\uev}\times} W^s(q). 
\end{eqnarray*} 
Here $\cM^N(S_{\tgamma_{i-1}},S_{\tgamma_i};H,J_N)$ is the moduli space consisting of tuples $(u,\utau,\uL)$ such that $(\utau,\uL)\in(S^1)^N\times([0,\infty[)^{N-1}$ and $u:\R\times S^1\to \hat W$ solves the 
equation 
$\p_s u+J_N^{\utau,\uL}(s,\theta,u(s,\theta))(\p_\theta u-X_H(u(s,\theta))=0$ and is subject to the asymptotic conditions 
$$
\lim_{s\to-\infty}u(s,\cdot)\in S_{\tgamma_{i-1}}, \qquad \lim_{s\to\infty}u(s,\cdot)\in S_{\tgamma_i}.
$$
For a generic choice of the family $J_N$ this moduli space is a smooth manifold of dimension $\dim\,\cM^N(S_{\tgamma_{i-1}},S_{\tgamma_i};H,J_N)=|\tgamma_{i-1}|-|\tgamma_i|+2N-1$. 

One shows as in~\cite[Lemma~3.6]{BOauto} that, for a generic choice of 
the collection of Morse functions $\{f_\gamma\}$, each space  
$\cM^N_\ell(p,q;H,\{f_\gamma\},J)$ is a smooth manifold, and its dimension is 
\begin{eqnarray} 
\lefteqn{\dim \, \cM^N_\ell(p,q;H,\{f_\gamma\},J_N)} \nonumber \\ 
& = & |\og| +\ind(p) - |\ug| - 
\ind(q) +2N-1. \label{eq:dimension-formula}
\end{eqnarray} 
We denote 
$$\cM^N(p,q;H,\{f_\gamma\},J_N)$$
the union over $\ell\ge 1$ of the spaces 
$\cM^N_\ell(p,q;H,\{f_\gamma\},J_N)$. The arguments of~\cite{BOauto} carry over in order to show that the elements of the $0$-dimensional moduli spaces $\cM^N(p,q;H,\{f_\gamma\},J_N)$ are in one-to-one bijective correspondence preserving signs with the elements of the $0$-dimensional moduli spaces $\cM^N(\og_p,\ug_q;K,J_N)$, where $\og_p,\ug_q\in\cP^+(K)$ are the periodic orbits that correspond to the critical points $p\in\mathrm{Crit}(f_\og)$ and $q\in\mathrm{Crit}(f_\ug)$. We denote the sign of an element $(\u,\utau,\uL)\in\cM^N(p,q;H,\{f_\gamma\},J_N)$ by $\eps(\u,\utau,\uL)$. We thus obtain 
$$
\varphi_N(\og_p)=\sum_{\scriptsize \begin{array}{c} \ug_q \\ |\ug_q|=|\og_p|+2N-1\end{array}} \sum_{(\u,\utau,\uL)\in\cM^N(p,q;H,\{f_\gamma\},J_N)} \eps(\u,\utau,\uL)\ug_q.
$$

In the rest of this section we assume that the family $J_N^{\utau,\uL}$ is a small enough perturbation of the regular almost complex structure $J$, and that $J_N^{\utau,\uL}(s,\theta)$ coincides with $J$ for all $\utau,\uL$ and $|s|\ge s_0=s_0(\utau,\uL)$. 

\begin{lemma} \label{lem:phivanish}
 We have $\varphi_N=0$ for $N\ge 2$, whereas $\varphi_1:SC_*^+(H,J)\to SC_{*+1}^+(H,J)$ acts by 
\begin{equation}\label{eq:phi1}
\varphi_1(\gamma_m)=0, \qquad \varphi_1(\gamma_M)=\left\{\begin{array}{ll}Ê\kappa_\gamma \gamma_m, & \mbox{if } \gamma \mbox{ is good}, \\ 0, & \mbox{if } \gamma \mbox{ is bad}.\end{array}\right.
\end{equation}
\end{lemma}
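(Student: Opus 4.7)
The plan is to analyze the parametrized moduli spaces $\cM^N(p,q;H,\{f_\gamma\},J_N)$ using the autonomy of $H$. The key starting observation is that when $H$ is time-independent, $H^{\theta-\tau_i}=H$ for every $i$, so the parametrized Hamiltonian collapses to $H_{\utau,\uL}(s,\theta,x)=\sum_i t_i(s) H(x)=H(x)$; hence the parameters $(\utau,\uL)$ enter the Floer equation only through the almost complex structure $J_N^{\utau,\uL}(s,\theta)$. Taking $J_N$ to be a $C^\infty$-small perturbation of the autonomous $J$ (which is admissible within the class $\mathscr{R}ep\mathscr{J}$), Gromov compactness implies that any element of $\cM^N(p,q;H,\{f_\gamma\},J_N)$ is close, as $J_N\to J$, to an autonomous Morse--Bott cascade from $p$ to $q$, in which the parametric Floer piece has been replaced by an autonomous Floer trajectory, possibly with additional breakings coming from $L_i\to\infty$. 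Emptiness of the autonomous cascade therefore forces emptiness of $\cM^N$.

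For the vanishing $\varphi_N=0$ with $N\ge 2$, the rigidity condition $|\og|+\ind(p)-|\ug|-\ind(q)+2N-1=0$ together with $\ind(q)-\ind(p)\le 1$ forces $|\ug|-|\og|\ge 2$. In any autonomous Morse--Bott cascade, each Floer piece $\cM(S_{\tgamma_{j-1}},S_{\tgamma_j};H,J)$ satisfies $|\tgamma_{j-1}|\ge|\tgamma_j|$ under the transversality assumptions of~\S\ref{sec:linconthom}: either the piece is constant, giving equality, or it is non-constant and the action strictly decreases, forcing a strict index decrease. Summing along the cascade yields $|\og|\ge|\ug|$, in direct contradiction with $|\ug|-|\og|\ge 2$. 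Hence no autonomous cascade exists, and $\cM^N(p,q;H,\{f_\gamma\},J_N)=\emptyset$ for $J_N$ sufficiently close to $J$.

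For $\varphi_1$, the same index count restricts potential non-trivial contributions to $(p,q)=(M,m)$ with $|\og|=|\ug|$. When $\og,\ug$ lie on distinct Morse--Bott circles, any autonomous cascade must include at least one non-constant Floer piece connecting distinct circles, and such a piece strictly decreases the index, contradicting $|\og|=|\ug|$; hence the cascade moduli space is empty and so is the parametric one. When $\og=\ug=\gamma$, the autonomous limit is a constant Floer trajectory $u(s,\theta)=\gamma^\tau(\theta)$ on the Morse--Bott circle $S_\gamma$. Writing $\gamma_M=\gamma^{\tau_M}$ and $\gamma_m=\gamma^{\tau_m}$, the asymptotic conditions $u(-\infty,\cdot)=\gamma_M(\cdot)$ and $u(+\infty,\cdot)=\gamma_m(\cdot-\tau_0)$ reduce to $\tau\equiv\tau_M$ and $\tau\equiv\tau_m+\tau_0$ modulo the stabilizer $\{k/\kappa_\gamma:k=0,\ldots,\kappa_\gamma-1\}\subset S^1$ of $\gamma$. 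These are consistent for $\tau_0\in\tau_M-\tau_m+\{k/\kappa_\gamma:k=0,\ldots,\kappa_\gamma-1\}$, yielding exactly $\kappa_\gamma$ rigid solutions indexed by the admissible values of $\tau_0$. The vanishing $\varphi_1(\gamma_m)=0$ follows because starting at $\gamma_m$ with $|\og_p|-|\ug_q|=-1$ would require reaching some $\ug_q$ of strictly higher $\mu$, which the cascade index analysis above excludes.

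The main obstacle is the orientation analysis for these $\kappa_\gamma$ contributions. The $\Z/\kappa_\gamma$ subgroup of $S^1$ stabilizing $\gamma$ permutes the $\kappa_\gamma$ solutions cyclically, and one must verify, using the coherent orientation scheme of~\cite{FH-coherent,BOauto}, that this permutation preserves signs when $\gamma$ is good (so the signed count equals $+\kappa_\gamma$) and that the generator of $\Z/\kappa_\gamma$ reverses orientation when $\gamma$ is bad (so the contributions cancel to $0$). This parity dichotomy is precisely the definition of good/bad orbits via the transverse Conley--Zehnder parity of iterates, and it completes the verification of formula~\eqref{eq:phi1}.
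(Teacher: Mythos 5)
Your proposal is correct and follows essentially the same route as the paper's proof: you use the dimension formula for rigid configurations to get $|\og|-|\ug|\le 2-2N$, the transversality of the autonomous pair $(H,J)$ plus a small-perturbation/compactness argument to get $|\og|\ge|\ug|$, and for $N=1$ you localize to constant Floer cylinders and count $\kappa_\gamma$ values of $\tau_0$. One minor imprecision: the orientation dichotomy you invoke at the end is not "the definition" of good/bad orbits but a consequence of the coherent-orientation conventions (as in \cite{FH-coherent,BOauto}) — the paper derives it from the convention that the $S^1$-action rotating the asymptote of a once-punctured plane preserves coherent orientations, whence the same holds for the cylinder and gives sign $+1$ for every good configuration, while for a bad orbit $\kappa_\gamma$ is even and consecutive $\tau_0$'s have opposite signs; you correctly flag this as the remaining step, so this is a matter of emphasis rather than a gap.
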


\begin{proof}
The dimension formula~\eqref{eq:dimension-formula} shows that for moduli spaces of dimension $0$ we have 
$$
|\og|-|\ug|=1-2N+\ind(q)-\ind(p).
$$
Since $\ind(p),\ind(q)\in\{0,1\}$ the right hand side of this equality is $\le 2-2N$, and for $N\ge 1$ we obtain 
\begin{equation} \label{eq:leq0}
|\og|-|\ug|\le 0.
\end{equation}

The regularity assumption on the pair $(H,J)$ implies that the moduli space $\cM(S_{\gamma^-},S_{\gamma^+};H,J)$, $\gamma^\pm\in\cP^+(H)$ is non-empty only if $|\gamma^-|-|\gamma^+|\ge 0$, and the equality implies $S_{\gamma^-}=S_{\gamma^+}$. A limit argument shows that the same conclusion holds for any moduli space $\cM(S_{\gamma^-},S_{\gamma^+};H,J')$ obtained from an almost complex structure $J'=J'(s,\theta)$ which is close enough to $J$. In particular the same conclusion holds for the moduli spaces $\cM(S_{\gamma^-},S_{\gamma^+};H,J_N^{\utau,\uL})$, $(\utau,\uL)\in(S^1)^N\times([0,\infty[)^{N-1}$, and we obtain that $\cM^N(p,q;H,\{f_\gamma\},J_N)$ is non-empty only if 
\begin{equation} \label{eq:geq0}
|\og|-|\ug|\ge 0.
\end{equation}

Let us first assume $N\ge 2$. It follows from~\eqref{eq:leq0} and~\eqref{eq:geq0} that the moduli spaces $\cM^N(p,q;H,\{f_\gamma\},J_N)$ are empty, so that $\varphi_N=0$. 

Let us now assume $N=1$. We obtain that $|\og|=|\ug|$ and this implies $S_\og=S_\ug$. Moreover $\ind(q)=1$, $\ind(p)=0$, so that $q$ is a minimum (denoted $m$) and $p$ is a maximum (denoted $M$). Up to reparametrization we can choose a common representative for $S_\og$ and $S_\ug$, denoted $\gamma$. The condition $|\og|=|\ug|$ implies that the Floer trajectories involved in $\cM^1(M,m;H,\{f_\gamma\},J_1)$ are all constant and in particular $\ell=1$, i.e. $\cM^1(M,m;H,\{f_\gamma\},J_1)=\cM^1_1(M,m;H,\{f_\gamma\},J_1)$ and there is a unique such Floer trajectory involved in our configuration. Since we are counting only rigid configurations in $\cM^1_1(M,m;H,\{f_\gamma\},J_1)$, this is the constant trajectory at $\gamma_M$. The elements of $\cM^1_1(M,m;H,\{f_\gamma\},J_1)$ are therefore pairs $(u\equiv \gamma_M,\tau_0)$ with $\tau_0$ such that $\gamma_m(\cdot-\tau_0)=\gamma_M$, i.e. $\tau_0\cdot\gamma_m=\gamma_M$. We see that there are precisely $\kappa_\gamma$ possible choices for $\tau_0\in S^1$. If $\gamma$ is good all the configurations count with the same sign equal to $1$ since the $S^1$-action preserves the geometric orientation and, for $p\in \mathrm{Crit}(f_\gamma)$, the coherent orientations for $W^u(p)$ and $W^s(p)$ coincide with the geometric ones~\cite[Remark~4.33]{BOauto}. 
Equivalently,  the space of Cauchy-Riemann operators on a punctured Riemann surface with one puncture asymptotic to some element of $S_\gamma$ is equipped with a natural $S^1$-action rotating that asymptote.  In the case of the complex plane, we adopted the convention that this $S^1$-action  preserves the coherent orientations. Therefore, this $S^1$-action must preserve the coherent orientations for any punctured Riemann surface, and in particular for the cylinder. Hence, any element of $\cM^1_1(M,m;H,\{f_\gamma\},J_1)$ has the same coherent sign as a constant Floer
trajectory, i.e. $+1$.
Thus $\varphi_1(\gamma_M)=\kappa_\gamma \gamma_m$ if $\gamma$ is good. If $\gamma$ is bad then $\kappa_\gamma$ is even and two successive choices of $\tau_0$ on $S^1$ count with opposite signs, so that $\varphi_1(\gamma_M)=0$. 

The Lemma is proved.
\end{proof}

\begin{remark} {\it Recall from~\S\ref{sec:algebraicS1} that, for a general $S^1$-complex, the square of the BV-operator $\Delta:=\varphi_1$ is homotopic to zero. The content of Lemma~\ref{lem:phivanish} above is that $\Delta^2=0$ \emph{at chain level} provided we use Morse-Bott autonomous data and transversality holds. Thus $(SC_*^+(H,J),\p,\Delta)$ is a mixed complex in the sense of Remark~\ref{rmk:mixed-complex}. This is an instance of the general principle that choosing the auxiliary data in a way that is compatible with the geometry results in simpler algebraic structures. We refer to~\cite[Remark~5.7]{BOGysin} for a description of $\Delta$ within a different framework.
}
\end{remark}

We now define moduli spaces relating the positive $S^1$-equivariant symplectic complex as described in \S\ref{sec:explicit} to the positive invariant symplectic complex 
from \S\ref{sec:Hamiltformul}.

Let $\beta : \R \to [0,1]$ be a smooth, increasing function such that $\beta(s) = 0$ if $s \le -1$
and $\beta(s) = 1$ for $s \ge 0$. Let $r : [0,1] \times \cJ \to \cJ$ be a smooth map such that
$r(0,\cdot)$ is the identity on $\cJ$ and $r(1,J') = J$ for all $J' \in \cJ$.
For $(\utau, \uL) \in (S^1)^N \times ([0,\infty[)^N$, we define 
$H^{\utau,\uL}_\beta : \R \times S^1 \times \widehat{W} \to \R$ by
$H^{\utau,\uL}_\beta(s,\theta,x) = (1-\beta(s-L_1 - \cdots - L_N))K_{(\utau,\uL)}(s,\theta,x) + 
\beta(s-L_1 - \cdots - L_N) H(x)$ 
and $J^{\utau,\uL}_\beta : \R \times S^1 \times \widehat{W} \to \cJ$ by 
$J^{\utau,\uL}_\beta(s,\theta,x) = r(\beta(s-L_1 - \cdots - L_N), J^{\utau,\uL}_N(s,\theta,x))$.

For simplicity we assume in the sequel that $K$ is a small time-dependent perturbation of $H$ of Morse-Bott type, so that elements in $\cP^+(K)$ are of the form $\gamma_p$, with $\gamma\in\cP^+(H)$ and $p\in\mathrm{Crit}(f_\gamma)$. Given $\og, \ug \in \cP(H)$ and $p \in \textrm{Crit}(f_\og)$ we define $\cM^N(\og_p, \ug; H_\beta, J_\beta)$ to consist of tuples $(u, \utau, \uL)$ such that
$(\utau, \uL) \in (S^1)^N \times ([0,\infty[)^N$ and $u : \R \times S^1 \to \widehat{W}$ solves
the equation 
$$
\p_s u + J^{\utau,\uL}_\beta(s, \theta, u(s,\theta)) (\p_\theta u - X_{H^{\utau,\uL}_\beta}(s, \theta,u(s,\theta))) = 0
$$ 
and is subject to the asymptotic conditions 
$$
\lim_{s \to -\infty} u(s,\cdot) = \og_p, \qquad \lim_{s \to\infty} u(s,\cdot) \in S_\ug.
$$
For a generic choice of the family $J_N$ this moduli space is a smooth manifold of dimension
$\dim \cM^N(\og_p, \ug; H_\beta, J_\beta) = |\og| + \ind(p) - |\ug| + 2N$.

Let us define the map
$$
\Pi:SC_*^{+,S^1}(H,J)\to SC_*^{+,inv}(H,J)
$$
by
$$
\Pi(u^N \og_p) = \sum_{\scriptsize \begin{array}{c}Ê\ug \mbox{ good} \\ |\ug| = |\og| + \ind(p) + 2N \end{array}} 
\sum_{(u,\utau,\uL) \in \cM^N(\og_p,\ug;H_\beta,J_\beta)}  \epsilon(u,\utau,\uL) \ S_\ug .
$$

\begin{remark} {\it Referring to the finite-dimensional example mentioned at the beginning of~\S\ref{sec:CHlin}, the map $\Pi$ corresponds to the first projection $X_{Borel}\to X/S^1$.}
\end{remark} 

As was the case with the maps $\varphi_N$, the map $\Pi$ admits an alternative description in terms of Morse-Bott moduli spaces as follows. We define a moduli space $\cM^N_\ell(p,S_\ug; H, \{ f_\gamma \}, J_\beta)$ to consist of the disjoint union over 
$\tilde\gamma_1,\dots,\tilde\gamma_{\ell-1}\in\cP^+(H)$ of tuples $(\u,\utau,\uL)$ such that 
$(\utau,\uL)\in(S^1)^N\times([0,\infty[)^N$ and $\u$ is an element of the fiber product 
(with the convention $\tilde\gamma_0:=\og$, $\tilde\gamma_\ell:=\ug$)
\begin{eqnarray*} 
&& 
W^u(p)  
\times_{\oev} 
(\cM(S_{\tgamma_0},S_{\tgamma_1};H,J_0^\theta)\!\times\!\R^+) \\ 
&& {_{\varphi_{f_{\tgamma_1}}\!\circ\uev}}\!\times _{\oev}  
\ldots {_{\varphi_{f_{\tgamma_{\ell-2}}}\!\circ\uev}}\!\times _{\oev}  
(\cM(S_{\tgamma_{\ell-2}},S_{\tgamma_{\ell-1}};H,J_0^\theta)\!\times\!\R^+) \\ 
&&  
{_{\varphi_{f_{\tgamma_{\ell-1}}}\!\circ\uev}\times_{\oev}}  
\cM^N(S_{\tgamma_{\ell-1}},S_{\tgamma_\ell};H,J_\beta)
\end{eqnarray*} 
Here $\cM^N(S_{\tgamma_{\ell-1}},S_{\tgamma_\ell};H,J_\beta)$ is the moduli space consisting of tuples $(u,\utau,\uL)$ such that $(\utau,\uL)\in(S^1)^N\times([0,\infty[)^N$ and $u:\R\times S^1\to \widehat W$ solves the 
equation $\p_s u+J_\beta^{\utau,\uL}(s,\theta,u(s,\theta))(\p_\theta u-X_H(u(s,\theta))=0$ and is subject to the asymptotic conditions 
$$
\lim_{s\to-\infty}u(s,\cdot)\in S_{\tgamma_{\ell-1}}, \qquad \lim_{s\to\infty}u(s,\cdot)\in S_{\tgamma_\ell}.
$$
For a generic choice of the family $J_N$ this moduli space is a smooth manifold of dimension 
$\dim\,\cM^N(S_{\tgamma_{\ell-1}},S_{\tgamma_\ell};H,J_\beta)=|\tgamma_{\ell-1}|-|\tgamma_\ell|+2N$. 

One shows as in~\cite[Lemma~3.6]{BOauto} that, for a generic choice of 
the collection of Morse functions $\{f_\gamma\}$, each space  
$\cM^N_\ell(p,S_\ug;H,\{f_\gamma\},J_\beta)$ is a smooth manifold, and its dimension is 
\begin{equation} \label{eq:dimension-formulaPi}
\dim \, \cM^N_\ell(p,S_\ug;H,\{f_\gamma\},J_\beta) =  |\og| +\ind(p) - |\ug| +2N. 
\end{equation} 
We denote 
$$\cM^N(p,S_\ug;H,\{f_\gamma\},J_\beta)$$
the union over $\ell\ge 1$ of the spaces 
$\cM^N_\ell(p,S_\ug;H,\{f_\gamma\},J_\beta)$. The arguments of~\cite{BOauto} carry over in order to show that the elements of the $0$-dimensional moduli spaces $\cM^N(p,S_\ug;H,\{f_\gamma\},J_\beta)$ are in one-to-one bijective correspondence preserving signs with the elements of the $0$-dimensional moduli spaces $\cM^N(\og_p,\ug;H_\beta,J_\beta)$, where $\og_p \in\cP^+(K)$ is the periodic orbit that corresponds to the critical point $p\in\mathrm{Crit}(f_\og)$. We thus obtain 
$$
\Pi(u^N \og_p)=\sum_{\scriptsize \begin{array}{c}Ê\ug \mbox{ good}\\ |\ug| = |\og| + \ind(p) + 2N \end{array}} 
\sum_{(\u,\utau,\uL) \in \cM^N(p,S_\ug;H,\{f_\gamma\},J_\beta)}  \epsilon(\u,\utau,\uL) \ S_\ug .
$$

\begin{lemma} \label{lem:Pi}
The map $\Pi$ is explicitly given by
\begin{equation}\label{eq:Pi}
\Pi(u^N\otimes \gamma_m)=0,\qquad \Pi(u^N\otimes \gamma_M)=\left\{\begin{array}{ll}ÊS_\gamma, & \mbox{if }ÊN=0 \mbox{ and } \gamma \mbox{ is good,} \\ 0, & \mbox{else.}  \end{array}\right.
\end{equation}
\end{lemma}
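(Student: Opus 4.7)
The plan is to combine the Morse--Bott dimension formula~\eqref{eq:dimension-formulaPi} with the regularity/limit argument already used in the proof of Lemma~\ref{lem:phivanish} to cut the contributing configurations down to a single, directly computable case.

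First I would observe that, for $\Pi(u^N \otimes \og_p)$ to receive a rigid contribution from $\cM^N(p, S_\ug; H, \{f_\gamma\}, J_\beta)$, we must have
\begin{equation*}
|\ug| - |\og| = \ind(p) + 2N \geq 0,
\end{equation*}
since $\ind(p) \in \{0,1\}$ and $N \geq 0$. On the other hand, every Floer cylinder in the fibered product --- the intermediate ones governed by $J_0 = J$ and the terminal one governed by $J_\beta^{\utau,\uL}$, which is an arbitrarily small perturbation of $J$ away from a fixed compact region --- is subject to the same limit argument as in Lemma~\ref{lem:phivanish}. This forces each cylinder connecting $S_{\tgamma_i}$ to $S_{\tgamma_{i+1}}$ to satisfy $|\tgamma_i| \geq |\tgamma_{i+1}|$, with equality possible only when the cylinder is constant. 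Telescoping yields $|\og| \geq |\ug|$. Combining the two inequalities forces $|\og| = |\ug|$, $\ind(p) = 0$ (so $p = M$), $N = 0$, and all intermediate Floer cylinders constant, whence $\ell = 1$ and $S_\og = S_\ug$. This already settles $\Pi(u^N \otimes \og_m) = 0$ for all $N$ since $\ind(m) = 1$, and $\Pi(u^N \otimes \og_M) = 0$ for $N \geq 1$.

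It remains to evaluate $\Pi(\og_M)$ for $N = 0$. Here the moduli space reduces to $W^u(M) \times_{\oev} \cM(S_\og, S_\og; H, J_\beta)$. Since $J_\beta$ is a small perturbation of the Morse--Bott regular $J$, I would argue by an implicit function theorem/Gromov compactness argument that the asymptotic evaluation map $\oev$ identifies this cylinder moduli space with $S_\og$, so that the fibered product with the single point $W^u(M) = \{M\}$ consists of one transversally cut-out element. Its sign is $+1$ by the orientation argument invoked at the end of the proof of Lemma~\ref{lem:phivanish}: the $S^1$-action on cylinders preserves coherent orientations, so a neighborhood of the constant cylinder at $M$ is oriented by the geometric $S^1$-orientation on $S_\og$, which matches the coherent orientation transported through $\oev$. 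Hence $\Pi(\og_M) = S_\og$ whenever $\og$ is good. When $\og$ is bad, the sum defining $\Pi$ is restricted to good $\ug$, while the index constraints above force any nonzero contribution to come from $\ug = \og$, which is then excluded; so $\Pi(\og_M) = 0$.

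The main point requiring care will be the orientation computation that extracts the sign $+1$ in the good case; everything else is a direct combination of the dimension formula with the limit argument already established in Lemma~\ref{lem:phivanish}, so no new transversality or compactness analysis should be needed.
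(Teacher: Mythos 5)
Your proof is correct and takes essentially the same route as the paper's: combine the dimension formula \eqref{eq:dimension-formulaPi} giving $|\og|-|\ug| = -\ind(p)-2N \le 0$ with the regularity constraint $|\og|-|\ug|\ge 0$ (via the limit argument of Lemma~\ref{lem:phivanish}) to force $N=0$, $\ind(p)=0$, $\og=\ug$, and then identify the surviving $N=0$ moduli space with $W^u(M)\times_{\oev} S_\og = \{M\}$ and check the sign via coherent orientations. You merely spell out the telescoping inequality a bit more explicitly than the paper (which folds it into a one-line "we infer"), and your sign discussion is slightly looser than the paper's, which pins it down via the equality of the coherent and geometric orientations for both $W^u(M)$ and $\cM(S_\gamma,S_\gamma;H,J)$; you correctly flag this as the step needing care.
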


\begin{proof}

The dimension formula~\eqref{eq:dimension-formulaPi} shows that for moduli spaces of dimension $0$ we have 
$$
|\og|-|\ug|=-2N-\ind(p) \le 0.
$$
We infer that this moduli space is empty unless $N = 0$ and $\ind(p)=0$. In particular, we have
$\Pi(u^N \otimes \gamma_M) = 0$ if $N > 0$ and $\Pi(u^N \otimes \gamma_m) = 0$.
 
Assume now that $N = 0$ and $\ind(p)=0$.  Then 
$\cM^0(\og_M, S_\ug; H_\beta, J_\beta)$ coincides with the moduli space 
$\cM(\og_M,S_\ug;H,J):=W^u(M)\times_{\oev}\cM(S_\og, S_\ug; H, J)$. Our transversality assumption on the pair $(H,J)$ implies that $\cM(S_\og,S_\ug;H,J)$ is nonempty only if $|\og|-|\ug|\ge 0$, with equality only if $\og=\ug$. Since in our case we have $|\og|-|\ug|\le 0$, we infer that indeed $\og=\ug=:\gamma$. But $\cM(S_\gamma,S_\gamma;H,J)$ is naturally identified with $S_\gamma$, so that $\cM(\gamma_M,S_\gamma;H,J)$ consists of a single element. 

That the sign of this element is $+1$ can be seen as follows. The coherent orientation for $W^u(M)$ coincides with its geometric orientation~\cite[Remark~4.33]{BOauto} and is therefore equal to $+1$, while the coherent orientation of $\cM(S_\gamma,S_\gamma;H,J)$ coincides with the geometric orientation of $S_\gamma$. Thus the fiber product is oriented with sign $+1$. Alternatively, the coherent orientation of the determinant line over the space of Cauchy-Riemann operators defined on a plane and asymptotic to (the linearization of) some element of $S_\gamma$ induces the coherent orientation by restricting to the space of Cauchy-Riemann operators asymptotic to (the linearization of) $\gamma_M$. This implies again that the fiber product is oriented with sign $+1$.

Since we defined $\Pi$ by a sum over good orbits we obtain that $\Pi(1\otimes \gamma_M)=0$ if $\gamma$ is bad, whereas $\Pi(1\otimes \gamma_M)=S_\gamma$ if $\gamma$ is good by the argument above.
\end{proof}

The next Proposition is the main result of this section. The description of $S^1$-equivariant homology as the homology of an $S^1$-complex and the explicit description of the $S^1$-equivariant differential provided by Lemma~\ref{lem:phivanish} give us the possibility to present a purely algebraic proof. 

\begin{proposition} \label{prop:isomorphism}
$\Pi$ is a chain map, it induces an isomorphism in homology over $\Q$ and it realizes an isomorphism of exact triangles between~\eqref{eq:Gysintriangle} for $SH_*^{+,S^1}(H,J)$ and~\eqref{eq:lesGysinSH+inv} for $SH_*^{+,inv}(H,J)$.
\end{proposition}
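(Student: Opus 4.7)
The plan is to prove the three assertions in turn, working at the chain level using Lemmas~\ref{lem:phivanish} and~\ref{lem:Pi} together with the decomposition $d = d^0 + d^1 + d^2$ of the Floer differential on $SC_*^+(H,J)$ recalled in~\S\ref{sec:GysinSH+inv}. The central step is the quasi-isomorphism assertion, which I will prove by exhibiting $\ker\,\Pi$ as an explicit acyclic subcomplex; the compatibility with the Gysin triangles will then be the main obstacle.

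For the chain map identity $\Pi \circ \p^{S^1} = \p \circ \Pi$, I verify it on generators. Since $\Pi$ annihilates every $u^\ell \gamma_m$, every $u^\ell \gamma_M$ with $\ell \ge 1$, and every $\gamma_M$ with $\gamma$ bad, while sending $\gamma_M \mapsto S_\gamma$ when $\gamma$ is good, most cases reduce to the observation that $d^0(\gamma_m) \in CM_*^{bad}$, $d^1(\gamma_m) \in Cm_*$, $d^2(\gamma_M) \in Cm_*$, and $\varphi_1(\gamma_M) \in Cm_*$ all lie in $\ker\,\Pi$. The only nontrivial case is $\gamma_M$ with $\gamma$ good, where the surviving contribution $\Pi(d^1(\gamma_M))$ equals $\p S_\gamma$ by the identification $d^1|_{CM_*/CM_*^{bad}} \simeq \p$ recorded in~\S\ref{sec:GysinSH+inv}. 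Alternatively, the chain-map property can be read off from a boundary analysis of the one-dimensional moduli spaces $\cM^N(\og_p,\ug;H_\beta,J_\beta)$, whose two types of breaking yield the two compositions.

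Setting $K := \ker\,\Pi$, the quasi-isomorphism over $\Q$ follows from acyclicity of $K$ together with the fact that $\Pi$ induces $\p$ on $SC_*^{+,S^1}/K \simeq SC_*^{+,inv}$, which is immediate since the $d^2$-part of $\overline{\p^{S^1}(\gamma_M)}$ drops out into $Cm_* \subset K$. To show $K$ is acyclic I filter by the action value $\cA(\gamma)$: the contributions from $d^1$ and $d^2$ strictly decrease the action, while $d^0$ and $\varphi_1$ preserve each Morse--Bott circle $S_\gamma$, so the associated graded splits as a direct sum $\bigoplus_\gamma \widetilde K_\gamma$ of local complexes supported on a single orbit. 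For $\gamma$ good, $\widetilde K_\gamma$ has basis $\{u^\ell \gamma_m\}_{\ell \ge 0} \cup \{u^\ell \gamma_M\}_{\ell \ge 1}$ and the only nonzero component of its differential is $u^\ell \gamma_M \mapsto \kappa_\gamma u^{\ell-1} \gamma_m$; invertibility of $\kappa_\gamma$ over $\Q$ forces acyclicity. For $\gamma$ bad, $\widetilde K_\gamma$ has basis $\{u^\ell \gamma_m, u^\ell \gamma_M\}_{\ell \ge 0}$ with differential $u^\ell \gamma_m \mapsto \pm 2\, u^\ell \gamma_M$, acyclic over $\Q$ since $2$ is invertible. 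A standard spectral sequence argument then gives $H_*(K;\Q) = 0$.

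The hardest point is the compatibility with the Gysin exact triangles. The triangle~\eqref{eq:Gysintriangle} arises at chain level from the short exact sequence $0 \to SC_*^+ \to SC_*^{+,S^1} \to SC_*^{+,S^1}[-2] \to 0$ of Proposition~\ref{prop:Gysin+spectral}, and the composite $SC_*^+ \hookrightarrow SC_*^{+,S^1} \xrightarrow{\Pi} SC_*^{+,inv}$ sends $\gamma_M \mapsto S_\gamma$ for $\gamma$ good and annihilates $\gamma_m$ and $\gamma_M$ for $\gamma$ bad, which coincides with the chain-level representative of the first map of~\eqref{eq:lesGysinSH+inv} described at the end of~\S\ref{sec:GysinSH+inv}. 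The obstacle is to promote this equality of first maps to a full morphism of short exact sequences of chain complexes realizing both triangles: I plan to extract from the two-line, third-page degeneration of the $\mu$-filtration spectral sequence on $SC_*^+$ an explicit short exact sequence whose long exact sequence is~\eqref{eq:lesGysinSH+inv}, and then to build a morphism from $\{0 \to SC_*^+ \to SC_*^{+,S^1} \to SC_*^{+,S^1}[-2] \to 0\}$ to it having $\Pi$ on the middle term. Naturality of the associated long exact sequences will then produce all three commutative squares of the isomorphism of Gysin triangles at once, with the 5-lemma certifying that each vertical map is an isomorphism.
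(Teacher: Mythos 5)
Your handling of the first two assertions is sound and close in spirit to the paper's proof. For the chain map identity, testing the four types of generators using Lemmas~\ref{lem:phivanish} and~\ref{lem:Pi} together with the properties of $d^0,d^1,d^2$ is exactly what the paper does, and your alternative via the boundary of $\cM^N(\og_p,\ug;H_\beta,J_\beta)$ would also work. For the quasi-isomorphism, the paper filters $SC_*^{+,S^1}$ by $\mu(\gamma)$ and shows that $\Pi$ induces an isomorphism on $E^1$ by identifying the local pieces of $E^0$ with the complexes $\tilde C^{\kappa_\gamma}$ (good $\gamma$) and $\tilde C^{bad}$ (bad $\gamma$); your reorganisation of this as acyclicity of $\ker\,\Pi$, via the same per-orbit local complexes, is an equivalent packaging. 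One small caution: you filter by the action $\cA(\gamma)$ rather than by $\mu(\gamma)$; this works generically but the paper's $\mu$-filtration is the one that interacts cleanly with the two-line spectral sequence used later, so it is the natural choice.

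The third assertion is where your plan has a genuine gap. You propose to realize both triangles by short exact sequences of chain complexes and to build a morphism between them, then invoke naturality and the five lemma. This cannot succeed as stated, for two reasons. First, the triangle~\eqref{eq:lesGysinSH+inv} is extracted from a \emph{two-line spectral sequence degenerating at the third page}, and its connecting map $\bar d^2$ lives on the $E^2$-page; as the paper explicitly remarks, $\bar d^2$ is \emph{not} induced by a chain-level map on $E^1$, so there is no short exact sequence of complexes whose connecting homomorphism is $\bar d^2$ in any straightforward sense. Writing down the sequence $0\to Cm_*^{good}\to SC_*^+\to CM_*/CM_*^{bad}\to 0$ gives the wrong connecting map (a version of $\bar d^1$, which is essentially zero on the relevant homologies after passing to $E^2$) and, more tellingly, has $SH_*^+$ in the \emph{middle} slot rather than the first, so its shape does not match~\eqref{eq:lesGysinSH+inv}. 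Second, and decisively, a morphism of short exact sequences would force all three squares of the ladder to commute; but the paper shows that the \emph{middle} square of the diagram~\eqref{eq:diagIII} is in fact \emph{anti-commutative} (the first and third squares commute). This sign obstruction rules out your strategy at the outset. The paper's actual proof of (III) bypasses any hypothetical morphism of short exact sequences: it writes down an explicit cycle $c_k\in SC_k^{+,S^1}$, unpacks the relation $\p^{S^1}c_k=0$ into the two equations~\eqref{eq:pS1ckCM*} and~\eqref{eq:pS1ckCm*}, computes $\bar d^2 F_*\Pi_*[c_k]$ directly from the zigzag definition of the $E^2$-differential, and then matches it (up to sign) against $F'_*\Theta^{-1}\Pi_*S[c_k]$ term by term, also tracking the conjugating automorphism $\Theta$ of Remark~\ref{rmk:Theta}. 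The five lemma is not what certifies the isomorphism either; you already have that $\Pi$ is a quasi-isomorphism from part (II), so what remains is only the (anti-)commutativity, and that demands the explicit cycle-level computation.
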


\begin{proof}

(I) We first prove that $\Pi$ is a chain map, i.e. $\Pi\p^{S^1}=\p\Pi$. This identity needs to be tested on four types of generators: (i)~$u^j\otimes\gamma_m$, (ii)~$u^j\otimes\gamma_M$ with $\gamma$ bad, (iii)~$u^j\otimes\gamma_M$ with $\gamma$ good and $j\ge 1$, and (iv)~$1\otimes \gamma_M$ with $\gamma$ good.
The first three cases follow immediately from the fact that $\p^{S^1}=d+u^{-1}\Delta$, from the definitions of $\Pi$ and $\Delta$, and from the properties of $d^0$ and $d^1$ in the decomposition $d=d^0+d^1+d^2$. In the fourth case we have from~\eqref{eq:differentialSH+inv} that 
$$
\p\Pi(1\otimes \gamma_M)=\p\langle S_\gamma\rangle=\sum_{\scriptsize \ug \mbox{ good}} \kappa_\gamma\#\cM(S_\gamma,S_\ug)/S^1\langle S_\ug \rangle.
$$
On the other hand, using that $d^0\gamma_M=0$, $d^2(CM_*)\subset Cm_*$, and $\Pi|_{\Z[u]\otimes CM_*^{bad}}=0$, we have
\begin{eqnarray*}
\Pi\p^{S^1}(1\otimes\gamma_M) & = & \Pi(1\otimes d^1\gamma_M) \\
& = & \Pi\left(1\otimes \sum_{\scriptsize \ug} \#\cM(\gamma_M,\ug_M)\ug_M\right) \\
& =& \sum_{\scriptsize \ug \mbox{ good}} \#\cM(\gamma_M,\ug_M) \langle S_\ug \rangle\\
& =& \sum_{\scriptsize \ug \mbox{ good}} \kappa_\gamma\#\cM(S_\gamma,S_\ug)/S^1\langle S_\ug \rangle.
\end{eqnarray*}
Here the coefficient $\#\cM(\gamma_M,\ug_M)$ describes the number of elements in some moduli space which gives the coefficient in front of $\ug_M$. For a general description of this moduli space the reader can refer to~\cite[(12) and~(44)]{BOcont}, but in our case this specializes to $\cM(\gamma_M,\ug_M)=W^u(M)\times_{\oev} \cM(S_\gamma,S_\ug)\, _{\uev}\times W^s(\underline{M})$. The second equality above is then just the definition of $d^1$, the third equality is the definition of $\Pi$, and the fourth equality expresses the fact that, modulo the obvious identification of $CM_*/CM_*^{bad}$ with $SC_*^{+,inv}$, the differential induced by $d^1$ coincides with the differential $\p$. (This is implicit in the last paragraph of~\S\ref{sec:GysinSH+inv}.)

(II) We now prove that $\Pi$ induces an isomorphism in homology over $\Q$. This makes use of the fact that the complex $SC_*^{+,S^1}$ is filtered by $\mu(\gamma)$, which follows from the explicit description of the BV-operator and the fact that $\p^{S^1}=d+u^{-1}\Delta$. The complex $SC_*^{+,inv}$ carries a filtration by $\mu(\gamma)$ (\emph{la filtration b\^ete}), the $E^1$-page is canonically identified with $SC_*^{+,inv}$ and the spectral sequence degenerates at the $E^2$-page. The map $\Pi$ preserves the filtrations and induces a map of spectral sequences. We shall prove that $\Pi$ induces an isomorphism at the level of the $E^1$-page. 
\emph{Note that this is the Morse theoretical analogue of the proof of Lemma~\ref{lem:Leray}.}

As a vector space, the $E^0$-page of the spectral sequence for $SC_*^{+,S^1}$ coincides with $SC_*^{+,S^1}$. The differential $\bar d^0$ is induced by $d^0$ and acts by  
$$
\bar d^0(u^j\otimes \gamma_m)=\left\{\begin{array}{ll}Ê\pm2 u^j\otimes \gamma_M, & \gamma \mbox{ bad,}Ê\\Ê0, & \gammaÊ\mbox{ good,}\end{array}\right. \qquad 
\bar d^0(u^j\otimes \gamma_M)=\left\{\begin{array}{ll}Ê\kappa_\gamma u^{j-1}\gamma_m,& j\ge 1,\\
0,& j=0.\end{array}\right.
$$

In the terminology of~\S\ref{sec:algebraicS1}, given $\kappa\in\Z$ we denote by $C^{\kappa}$ the $S^1$-complex $(\Lambda(a),\varphi_\cdot)$, where $a$ has degree $1$, $\Lambda(a)$ denotes the free graded unital algebra generated by $a$, $\varphi_j=0$ for $j\neq 1$ and $\varphi_1=:\Delta$ with $\Delta1=\kappa a$, $\Delta a=0$. This is a model for the circle $S^1=\R/\Z$ endowed with the $S^1$-action $\tau\cdot\theta:=\theta-\kappa\tau$, or equivalently for $S_\gamma$ with $\gamma$ a good orbit of multiplicity $\kappa$. The $S^1$-equivariant homology with $\Q$-coefficients for this $S^1$-complex is the homology of the complex $\tilde C^\kappa:=(\Q[u]\otimes_\Z\Lambda(a),\tilde\p^\kappa=u^{-1}\Delta)$, which has rank $1$ in all non-negative degrees and whose differential can be explicitly written as follows
$$
\xymatrix{\Q & \ar[l]_0 \Q & \ar[l]_{\times \kappa} \Q & \ar[l]_0 \Q & \ar[l]_{\times \kappa}Ê\Q & \ar[l]_0 \dots}
$$
This complex computes -- as it should -- the homology of the classifying space $BStab(x)$, with $Stab(x)$ being the stabilizer of the geometric action at an arbitrary point $x$. For $\kappa\neq 0$ the stabilizer is $\Z/\kappa\Z$ and the homology is supported in degree $0$, where it has rank $1$.

Still in the terminology of~\S\ref{sec:algebraicS1}, let us denote $C^{bad}$ the $S^1$-complex $(\Lambda(a),\varphi_\cdot)$, with $\varphi_j=0$ for $j\ge 1$ and $\varphi_0 =:\p$ with $\p a=\pm 2$. This is a model for $S_\gamma$ with $\gamma$ a bad orbit of arbitrary (even) multiplicity. The $S^1$-equivariant homology with $\Q$-coefficients for this $S^1$-complex is the homology of the complex $\tilde C^{bad}:=(\Q[u]\otimes_\Z\Lambda(a),\tilde\p^{bad}=\p)$ and it vanishes since $H(\Lambda(a),\p)=0$. Note that $H(\Lambda(a),\p)$ is the homology of $S^1$ with twisted coefficients. 

If we identify $\gamma_m$ with $a\otimes\gamma_M$, the $E^0$-page of the spectral sequence for $SC_*^{+,S^1}$ is given as a complex by 
$$
E^0\simeq \bigoplus_{\scriptsize \begin{array}{c} S_\gamma \\ \gamma \mbox{ good}\end{array}} \tilde C^{\kappa_\gamma} \otimes \langle \gamma_M \rangle \quad \oplus \quad
\bigoplus_{\scriptsize \begin{array}{c} S_\gamma \\ \gamma \mbox{ bad}\end{array}} \tilde C^{bad}\otimes \langle \gamma_M\rangle.
$$

It follows from the above discussion on $C^\kappa$ and $C^{bad}$ that upon taking homology the $E^1$-page of the spectral sequence is given by 
$$
E^1=\bigoplus_{\scriptsize \begin{array}{c} \gamma \mbox{ good}\end{array}} \langle \gamma_M\rangle
$$
and in particular the map $\Pi$ induces an isomorphism of vector spaces $E^1\stackrel\sim\longrightarrow SC_*^{+,inv}$. Although this is enough to conclude that $\Pi$ itself is a quasi-isomorphism, let us note that the  isomorphism $E^1\stackrel\sim\longrightarrow SC_*^{+,inv}$ is actually an isomorphism of complexes. 
Indeed, the differential $\bar d^1$ induced by $d^1$ acts as already seen by 
$$
\bar d^1\langle\gamma_M\rangle=\sum_{\scriptsize \ug \mbox{ good}} \#\cM(\gamma_M,\ug_M)\langle\ug_M\rangle=\sum_{\scriptsize \ug \mbox{ good}} \kappa_\gamma\#\cM(S_\gamma,S_\ug)/S^1\langle \ug_M\rangle,
$$
and the latter expression coincides with that of the differential $\p$ on $SC_*^{+,inv}$ modulo the identification $\gamma_M\mapsto S_\gamma$.

(III) We need to prove the commutativity or anti-commutativity of the three squares in the following diagram.
\begin{equation} \label{eq:diagIII}
\xymatrix{
SH_*^+ \ar@{=}[d] \ar[r]^-I & 
SH_*^{+,S^1} \ar[d]^{\Pi_*} \ar[r]^-S & 
SH_{*-2}^{+,S^1} \ar[d]^{\Pi_*} \ar[r]^-B & 
SH_{*-1}^+ \ar@{=}[d] \\
SH_*^+ \ar[r]Ê& 
SH_*^{+,inv}Ê\ar[r]^-DÊ& 
SH_{*-2}^{+,inv}Ê\ar[r] & 
SH_{*-1}^+ 
}
\end{equation}

\noindent {\bf Remark.}Ê\ {\it We will actually prove that the first and the third squares are commutative, whereas the middle square is anti-commutative. This seems to be related to the anti-commutativity of the last square in Lemma~\ref{lem:grid}, although we do not have a conceptual explanation.}

We define the bottom sequence in~\eqref{eq:diagIII} via the following identification 
$$
\xymatrix{
SH_*^+ \ar[r]Ê\ar@{=}[dd] & 
SH_*^{+,inv}Ê\ar[r]^-DÊ\ar[dd]^{F_*} & 
SH_{*-2}^{+,inv}Ê\ar[r] \ar[d]^{ \Theta^{-1}} & 
SH_{*-1}^+  \ar@{=}[dd] \\
& & \left(SH_{*-2}^{+,inv}\right)' \ar[d]^{F'_*} & \\
SH_*^+ \ar[r]^-{p_*} &
E^2_{*,0} \ar[r]^{\bar d^2} & 
E^2_{*-2,1} \ar[r]^-{i_*} &
SH_{*-1}^+ 
}
$$
The map $F_*:SH_*^{+,inv}\to E^2_{*,0}$ is induced by $F:SC_*^{+,inv}\to E^1_{*,0}=CM_*/CM_*^{bad}$, $S_\gamma\mapsto \gamma_M \mbox{ mod } CM_*^{bad}$, whereas the map $ F'_*:(SH_{*-2}^{+,inv})'=:H_{*-2}(SC_*^{+,inv},\p')\to E^2_{*-2,1}$ is induced by $F':SC_{*-2}^{+,inv}\to E^1_{*-2,1}=Cm_{*-1}^{good}$, $S_\gamma\mapsto \gamma_m$. 

We now prove commutativity of the three squares in the diagram obtained by composing the above two diagrams.

(i) Commutativity of the first square 
$$
\xymatrix{
SH_*^+ \ar[r]^-I \ar@{=}[d] & SH_*^{+,S^1} \ar[d] \\
SH_*^+ \ar[r]^-{p_*} & E^2_{*,0}
}
$$
We have a commutative diagram at chain level
$$
\xymatrix{
SC_*^+ \ar[r]^-I \ar@{=}[d] & SC_*^{+,S^1} \ar[d] \\
SC_*^+ \ar[r]^-{p_*} & E^1_{*,0}
}
$$
Recall that the map $I$ is induced by inclusion (Proposition~\ref{prop:Gysin+spectral}). Thus, an element $\gamma_m$ in the top copy of $SC_*^+$ is sent through both compositions to zero, just like an element $\gamma_M$ with $\gamma$ bad. On the other hand, an element $\gamma_M$ with $\gamma$ good is sent through both compositions to $(\gamma_M \mbox{ mod } CM_*^{bad})\in E^1_{*,0}$.

(ii) Commutativity of the third square
$$
\xymatrix{
SH_{*-2}^{+,S^1} \ar[r]^-B \ar[d] & SH_{*-1}^+ \ar@{=}[d] \\
E^2_{*-2,1} \ar[r]^-{i_*} & SH_{*-1}^+
}
$$
We again prove the commutativity of the diagram at chain level
$$
\xymatrix{
SC_{*-2}^{+,S^1} \ar[r]^-B \ar[d] & SC_{*-1}^+ \ar@{=}[d] \\
E^1_{*-2,1} \ar[r]^-{i_*} & SC_{*-1}^+
}
$$
Recall that $B$ is induced by $\Delta$ (Proposition~\ref{prop:Gysin+spectral}) and acts therefore by 
$$
u^j\otimes\gamma_m\mapsto 0,\qquad u^j\otimes \gamma_M\mapsto \left\{\begin{array}{ll} \kappa_\gamma\gamma_m, & \mbox{if } \gamma \mbox{ is good and } j=0,\\ 0, & \mbox{otherwise.} \end{array}\right.
$$
Thus an element $u^j\otimes \gamma_m$ in the top left corner of the diagram is sent to $0$ via both compositions, and similarly for an element $u^j\otimes \gamma_M$ such that $\gamma$ is bad or $j\ge 1$. On the other hand, an element $1\otimes \gamma_M$ with $\gamma$ good is sent via both the top composition and the bottom composition to $\kappa_\gamma\gamma_m$.

(iii) We prove anti-commutativity of the middle square 
$$
\xymatrix{
SH_k^{+,S^1}Ê\ar[r]^-S \ar[d]^-{F_*\Pi_*} & SH_{k-2}^{+,S^1} \ar[d]^-{F'_*\Theta^{-1}\Pi_*} \\
E^2_{k,0} \ar[r]^{\bar d^2}Ê& E^2_{k-2,1}
}
$$
In this case it only makes sense to check commutativity of the diagram in homology since $\bar d^2$ is a differential on the $E^2$-page of a spectral sequence and is a priori \emph{not} induced by a chain map defined on the $E^1$-page. 

Recall that $S$ is ``multiplication by $u^{-1}$'' (Proposition~\ref{prop:Gysin+spectral}). Let 
$$
c_k:=\sum_{j\ge 0} u^j\otimes \big(\sum_{\scriptsize \begin{array}{cc} |\gamma'|+2j=k \end{array}} \alpha_{\gamma'}^j \gamma'_M + \sum_{\scriptsize \begin{array}{cc} |\gamma''|+2j+1=k \end{array}} \beta_{\gamma''}^j \gamma''_m\big)
$$
be a cycle in $SC_k^{+,S^1}$, with $\alpha_{\gamma'}^j,\beta_{\gamma''}^j\in\Q$. The condition $\p^{S^1}c_k=0$ is equivalent to the fact that, for any $j\ge 0$, we have
\begin{eqnarray}\label{eq:pS1ck}
\sum_{\scriptsize |\gamma'|=k-2j} \alpha_{\gamma'}^j (d^1\gamma'_M+d^2\gamma'_M) + \sum_{\scriptsize |\gamma'|=k-2j-2} \alpha_{\gamma'}^{j+1} \Delta\gamma'_M \\
+ \sum_{|\gamma''|=k-2j-1} \beta_{\gamma''}^j (d^0\gamma''_m+d^1\gamma''_m) \ = \ 0. \nonumber
\end{eqnarray}

For $j=0$ we obtain the following two relations by considering the components of~\eqref{eq:pS1ck} on $CM_*$, respectively on $Cm_*$: 
\begin{equation} \label{eq:pS1ckCM*}
\sum_{\scriptsize \begin{array}{cc} |\gamma'|=k \end{array}} \alpha_{\gamma'}^0 d^1\gamma'_M
+ \sum_{|\gamma''|=k-1} \beta_{\gamma''}^0 d^0\gamma''_m = 0,
\end{equation}
\begin{equation} \label{eq:pS1ckCm*}
\sum_{\scriptsize \begin{array}{cc} |\gamma'|=k \end{array}} \alpha_{\gamma'}^0 d^2\gamma'_M 
+ \sum_{\scriptsize |\gamma'|=k-2} \alpha_{\gamma'}^{1} \Delta\gamma'_M
+ \sum_{|\gamma''|=k-1} \beta_{\gamma''}^0 d^1\gamma''_m = 0.
\end{equation} 

Equation~\eqref{eq:pS1ckCM*} expresses the fact that $F\Pi(c_k)$ is a cycle in $E^1_{k,0}=CM_k/CM_k^{bad}$. Indeed, viewing $F\Pi(c_k)$ as an element of $CM_*$ we compute 
\begin{eqnarray*}
d^1(F\Pi(c_k)) & = & \sum_{\scriptsize \begin{array}{cc} \gamma' \mbox{ good}Ê\\Ê|\gamma'|=k \end{array}} \alpha_{\gamma'}^0 d^1\gamma'_M \\
& = & \sum_{\scriptsize \begin{array}{cc} \gamma'Ê\\Ê|\gamma'|=k \end{array}} \alpha_{\gamma'}^0 d^1\gamma'_M\\
& = & - \sum_{|\gamma''|=k-1} \beta_{\gamma''}^0 d^0\gamma''_m \\
& = & - d^0\big(\sum_{\scriptsize \begin{array}{cc} \gamma'' \mbox{ bad}Ê\\Ê|\gamma''|=k-1 \end{array}} \beta_{\gamma''}^0 \gamma''_m\big), 
\end{eqnarray*}
and this last expression belongs to $CM_*^{bad}$. Thus $\bar d^1(F\Pi(c_k))=0$.

Now, by the definition of the induced map $\bar d^2$ in a spectral sequence (see~\cite[Example~7.1]{LS}) we have 
$$
\bar d^2F_*\Pi_*[c_k]=[d^2F\Pi(c_k)+d^1\beta]
$$
where $d^1(F\Pi(c_k))+d^0\beta=0$. The previous computation shows that we can take 
$$
\beta:=\sum_{\scriptsize \begin{array}{cc} \gamma'' \mbox{ bad}Ê\\Ê|\gamma''|=k-1 \end{array}} \beta_{\gamma''}^0 \gamma''_m
$$ 
and we obtain that the bottom composition in our diagram is 
\begin{eqnarray*}
\bar d^2F_*\Pi_*[c_k] & = & \big[d^2\sum_{\scriptsize \begin{array}{cc} \gamma'Ê\mbox{ good}Ê\\Ê|\gamma'|=k \end{array}} \alpha_{\gamma'}^0 \gamma'_M + d^1\sum_{\scriptsize \begin{array}{cc} \gamma'' \mbox{ bad}Ê\\Ê|\gamma''|=k-1 \end{array}} \beta_{\gamma''}^0 \gamma''_m\big] \\
& = & \big[d^2\sum_{\scriptsize \begin{array}{cc} \gamma'Ê\\Ê|\gamma'|=k \end{array}} \alpha_{\gamma'}^0 \gamma'_M + d^1\sum_{\scriptsize \begin{array}{cc} \gamma'' \mbox{ bad}Ê\\Ê|\gamma''|=k-1 \end{array}} \beta_{\gamma''}^0 \gamma''_m\big]
\end{eqnarray*}
The second equality is just the consequence of the fact that we are considering $F\Pi(c_k)$ as a class in $CM_*/CM_*^{bad}$.

On the other hand, the top composition is given at chain level by $F'\Theta^{-1}\Pi S:SC_k^{+,S^1}\to E^1_{k-2,1}=Cm_{k-1}^{good}$ and is directly computed to be  
\begin{eqnarray*}
F'\Theta^{-1}\Pi S(c_k) & = & F'\Theta^{-1}(\sum_{\scriptsize \begin{array}{cc} \gamma' \mbox{ good}Ê\\Ê|\gamma'|=k-2 \end{array}} \alpha_{\gamma'}^1 S_{\gamma'}) \\
& = & F'(\sum_{\scriptsize \begin{array}{cc} \gamma' \mbox{ good}Ê\\Ê|\gamma'|=k-2 \end{array}} \alpha_{\gamma'}^1 \kappa_{\gamma'} S_{\gamma'})\\
& = & \sum_{\scriptsize \begin{array}{cc} \gamma' \mbox{ good}Ê\\Ê|\gamma'|=k-2 \end{array}} \alpha_{\gamma'}^1 \kappa_{\gamma'} \gamma'_m \\
& = & \sum_{\scriptsize \begin{array}{cc} \gamma' \mbox{ good}Ê\\Ê|\gamma'|=k-2 \end{array}} \alpha_{\gamma'}^1\Delta(\gamma'_M) \\
& =  & \sum_{\scriptsize \begin{array}{cc} \gamma' Ê\\Ê|\gamma'|=k-2 \end{array}} \alpha_{\gamma'}^1\Delta(\gamma'_M).  
\end{eqnarray*} 

We obtain 
\begin{eqnarray*}
\lefteqn{\bar d^2F_*\Pi_*[c_k] + F'_*\Theta^{-1}\Pi_* S[c_k]} \\
& = & \big[d^2\sum_{\scriptsize \begin{array}{cc} \gamma'Ê\\Ê|\gamma'|=k \end{array}} \alpha_{\gamma'}^0 \gamma'_M 
+ d^1\sum_{\scriptsize \begin{array}{cc} \gamma'' \mbox{ bad}Ê\\Ê|\gamma''|=k-1 \end{array}} \beta_{\gamma''}^0 \gamma''_m
+\sum_{\scriptsize \begin{array}{cc} \gamma' Ê\\Ê|\gamma'|=k-2 \end{array}} \alpha_{\gamma'}^1\Delta(\gamma'_M)\big] \\
& = & \big[-d^1\sum_{\scriptsize \begin{array}{cc} \gamma'' \mbox{ good}Ê\\Ê|\gamma''|=k-1 \end{array}} \beta_{\gamma''}^0 \gamma''_m\big]\\
& = & 0 \in E^2_{k-2,1}.
\end{eqnarray*}
For the second equality we use~\eqref{eq:pS1ckCm*} and for the third equality we use the definition of $E^2_{k-2,1}=H_{k-2}(E^1_{*,1}=Cm^{good}[-1]_*,\bar d^1=d^1)$, where $Cm^{good}[-1]_*:=Cm_{*-1}^{good}$. This proves anti-commutativity of the middle diagram and finishes the proof of the Proposition. 
\end{proof}

It follows from the definition that the map $\Pi$ commutes with the continuation maps induced by increasing homotopies. The next statement is a straightforward consequence of this fact and of Proposition~\ref{prop:isomorphism}.

\begin{proposition} There is an isomorphism $\Pi:SH_*^{+,S^1}(W)\stackrel \sim\longrightarrow SH_*^{+,inv}(W)$ in homology over $\Q$ which moreover realizes an isomorphism of exact triangles between~\eqref{eq:Gysintriangle} for $SH_*^{+,S^1}(W)$ and~\eqref{eq:lesGysinSH+inv} for $SH_*^{+,inv}(W)$.
\hfill{$\square$}
\end{proposition}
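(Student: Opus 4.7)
The plan is to bootstrap from the already-established Proposition~\ref{prop:isomorphism}, which handles a single pair $(H,J)$, by showing that the chain-level map $\Pi$ is natural under the continuation maps used to form the direct limits defining $SH_*^{+,S^1}(W)$ and $SH_*^{+,inv}(W)$, and then invoking exactness of the direct limit functor.

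First I would verify that $\Pi$ commutes with continuation morphisms up to chain homotopy. Concretely, given two regular pairs $(H^a,J)$ and $(H^{a'},J')$ with $H^a\le H^{a'}$, both of the form considered in~\S\ref{sec:CH-Ham} and~\S\ref{sec:simplifying}, one chooses a monotone homotopy $H^s$ from $H^a$ to $H^{a'}$ which is constant outside a compact set in $s$, together with a matching homotopy $J^s$ of almost complex structures. On the equivariant side, the induced continuation maps $SC_*^{+,S^1}(H^a,J)\to SC_*^{+,S^1}(H^{a'},J')$ and on the invariant side $SC_*^{+,inv}(H^a,J)\to SC_*^{+,inv}(H^{a'},J')$ are both defined by the same families $(H^s,J^s)$, coupled with the parametrization data of~\S\ref{sec:explicit} on the equivariant side and with $S^1$-Morse--Bott cascades on the invariant side. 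The map $\Pi$ at each level is defined by inserting a cutoff $\beta$ that transitions from the $N$-parameter equation to the standard Floer equation asymptotic to $S_\ug$, and the obvious parametrized moduli spaces combining $\beta$ with the monotone homotopy yield a chain homotopy between $\Pi_{H^{a'}}\circ c_{S^1}$ and $c_{inv}\circ \Pi_{H^a}$. This is a standard ``homotopy of homotopies'' argument; the only point to check is that the analytical hypotheses under which Lemma~\ref{lem:phivanish} and Lemma~\ref{lem:Pi} were derived (namely that $J_N^{\utau,\uL}$ is a small perturbation of a regular $J$, so that the only $0$-dimensional Floer trajectories between the same $S_\gamma$ are constants) propagate through the homotopy.

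Once this naturality is in place, passing to the limit is formal. Let $\{H^{a_k}\}$ be a cofinal sequence in $\cH$ (resp.\ $\cH_0^a$) with the regularity properties required in~\S\ref{sec:simplifying} and~\S\ref{sec:CH-Ham}, so that by choosing common data one obtains compatible maps $\Pi_k:SC_*^{+,S^1}(H^{a_k})\to SC_*^{+,inv}(H^{a_k})$. Proposition~\ref{prop:isomorphism} shows that each $\Pi_k$ is a quasi-isomorphism over $\Q$, and the homotopy-commutativity above ensures that the induced maps $\Pi_{k,*}$ fit into a commutative ladder with the continuation isomorphisms. Since $\lim_{\longrightarrow}$ is exact on $\Q$-vector spaces, one obtains
\[
\Pi_*:\ SH_*^{+,S^1}(W)=\lim_{\longrightarrow}SH_*^{+,S^1}(H^{a_k})\ \stackrel{\sim}{\longrightarrow}\ \lim_{\longrightarrow}SH_*^{+,inv}(H^{a_k})=SH_*^{+,inv}(W).
\]

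For the statement about exact triangles, observe that Proposition~\ref{prop:isomorphism} already provides an isomorphism between~\eqref{eq:Gysintriangle} and~\eqref{eq:lesGysinSH+inv} for each fixed Hamiltonian. Both Gysin triangles are natural with respect to continuation maps (on the equivariant side by Proposition~\ref{prop:Gysin-spec} and Theorem~\ref{thm:Gysin-spec}; on the invariant side by the definition of the triangle via the filtration $\mu(\gamma)$ on $SC_*^+$ recalled in~\S\ref{sec:GysinSH+inv}). Taking the direct limit of the entire isomorphism of triangles thus yields the desired isomorphism of Gysin triangles at the level of $W$. The main obstacle in this program is purely notational bookkeeping: ensuring that the three distinct choices of auxiliary data (the parametrization $\rho$, the perfect Morse function $f$ on $\mathbb{C}P^N$, and the Morse--Bott perturbation used for the invariant side) can be made simultaneously compatible with a cofinal family of monotone homotopies, so that the chain-level identifications of Lemmas~\ref{lem:phivanish} and~\ref{lem:Pi} persist along the limit. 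Since each of these compatibilities is open and generic, the argument goes through.
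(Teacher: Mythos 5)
Your proposal is correct and follows the same route as the paper: the paper's own proof is a two-line remark that $\Pi$ commutes with the continuation maps induced by increasing homotopies, after which the statement is declared a straightforward consequence of Proposition~\ref{prop:isomorphism} (the fixed-Hamiltonian case) combined with passage to the direct limit. Your write-up simply fleshes out this argument with the standard ``homotopy of homotopies'' details and the exactness of $\lim_{\longrightarrow}$ on $\Q$-vector spaces, which is exactly the content the paper is gesturing at.
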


\subsection{Filled versions of linearized contact homology}Ê\label{sec:CHlin-defi}

The isomorphism in the previous section serves as a motivation for defining filled versions of linearized contact homology, respectively of positive invariant symplectic homology. These involve closed orbits in the trivial free homotopy class and are ultimately isomorphic using $\Q$-coefficients to the $S^1$-equivariant symplectic homology groups. However, we provide in this section separate definitions in the spirit of SFT in order to emphasize the parallel between linearized contact homology and symplectic homology. 
As in the previous sections, we give the definitions under ideal transversality assumptions and prove the isomorphism with the corresponding versions of symplectic homology. 
This is motivated by the fact that $S^1$-equivariant symplectic homology is defined for all Liouville domains. 

In this section we work in the trivial free homotopy class of loops in $\hat W$. 

\subsubsection{Non-equivariant filled linearized contact homology} \label{sec:filled-nonequivariant}

As a warm-up, we start with a non-equivariant version of filled linearized contact homology. In order to define it we need to impose as transversality assumptions the condition $(B_0)$ in~\S\ref{sec:linconthom} and the following generalization of condition $(A)$: 
\begin{enumerate} \label{pagenumber:transv-bar}
\item[($\overline A$)] $J$ is regular for holomorphic planes in $\hat W$ asymptotic to some $\gamma'\in\cP^0(\alpha)$ and which belong to moduli spaces $\cM(\gamma',\emptyset;J)$ of virtual dimension $\le 2n-2$.
\end{enumerate}

For each periodic Reeb orbit $\gamma'\in\cP^0(\alpha)$, fix two generic points $\gamma'_M$, $\gamma'_m$ on its image. We view these as the minimum, respectively maximum of a perfect Morse function $f_{\gamma'}$ defined on the image of $\gamma'$. We also choose a Morse function $f:W\to \R$ whose gradient points outwards along the boundary and for which the boundary is a regular level. We extend $f$ to a Morse function defined on $\hat W$ and having no critical points on $\hat W\setminus W$. We denote a critical point of $f$ by $q$, and a critical point of $f_{\gamma'}$ by $\gamma'_p$. We also choose a generic Riemannian metric $g$ on $W$. Given this data, we define 
$$
\hat \cM(\gamma'_p,q) 
$$
to consist of pairs $(u,v)$ such that 
\begin{itemize}
\item $u:\C\to \hat W$ is a $J$-holomorphic plane asymptotic at $+\infty$ to $\gamma'$, 
\item $v:[0,\infty[\to \hat W$ is a positive gradient trajectory of $f$, i.e. $\dot v=\nabla f(v)$ asymptotic at $+\infty$ to $q$,
\item $u(0)=v(0)$,
\item $u(+\infty,0)\in W^u(\gamma'_p;\nabla f_{\gamma'})$.
\end{itemize}
We define the associated moduli space
$$
\cM(\gamma'_p,q)
$$
to be the quotient of $\hat\cM(\gamma'_p,q)$ under the $\R^+$-action given by $r\cdot (u,v)\mapsto (u(r\cdot),v)$.
The virtual dimension of $\cM(\gamma'_p,q)$ is 
\begin{eqnarray*}
\dim\,\cM(\gamma'_p,q) & = & (CZ(\gamma') + n+1) -(1 - \ind(\gamma'_p; f_{\gamma'})) - 2n + \ind(q; f) - 1 \\
& =& |\gamma'_p| +n-1 -2n +\ind(q; f).
\end{eqnarray*}
We use here the notation $|\gamma'_p|=CZ(\gamma')+\ind(\gamma'_p;f_{\gamma'})$. 

Let $BC_*(\alpha,J)=\bigoplus_{\gamma'\in\cP(\alpha)}\langle \gamma'_m,\gamma'_M\rangle$. This complex is endowed with a differential as in~\cite[\S3.2]{BOcont} and is graded by $|\gamma'_p|$. Denote $MC^*(f,g)$ the cohomological Morse complex for $f$ and $MC_{*}(-f,g)$ the homological Morse complex for $-f$. We define a linear map 
$$
\Gamma:BC_*(\alpha,J)\to MC^{-*+n+1}(f,g)=MC_{*+n-1}(-f,g)
$$
by letting it act on the generators via 
$$
\Gamma(\gamma'_p):=\sum_{q\,:\, \dim\,\cM(\gamma'_p,q)=0}\#\cM(\gamma'_p,q)q.
$$

The next Lemma is proved by examining the boundary of $1$-dimensional moduli spaces $\cM(\gamma'_p,q)$.
\begin{lemma}
$\Gamma$ is a chain map.  \hfill{$\square$}
\end{lemma}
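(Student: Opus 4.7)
I would prove that $\Gamma$ is a chain map by the standard cobordism analysis of $1$-dimensional moduli spaces $\overline\cM(\gamma'_p, q)$, i.e., those satisfying $|\gamma'_p| + \ind(q;f) - n - 1 = 1$. The proof reduces to identifying the boundary strata of the compactification with the contributions to $d_f \circ \Gamma(\gamma'_p)$ and $\Gamma \circ \p(\gamma'_p)$, respectively, and then invoking the vanishing of the signed boundary count.

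Combining SFT compactness~\cite{BEHWZ} (applicable in the presence of transversality condition $(\overline A)$ from~\S\ref{sec:linconthom}) with classical Morse-theoretic compactness for the positive gradient trajectory $v$, any sequence in $\cM(\gamma'_p, q)$ either converges in the interior or degenerates into one of three boundary configurations. Type I is Morse breaking of $v$ at an interior critical point $q''$ with $\ind(q''; f) = \ind(q; f) - 1$, producing a rigid element of $\cM(\gamma'_p, q'')$ concatenated with a rigid positive gradient trajectory from $q''$ to $q$; summed over all intermediate $q''$, these strata collectively reproduce $d_f \circ \Gamma(\gamma'_p)$. Type II is SFT breaking at the positive end of the plane $u$: the top level is a $J_\infty$-holomorphic punctured cylinder in the symplectization from $\gamma'$ at $+\infty$ to some $\tilde\gamma' \in \cP^0(\alpha)$ at $-\infty$ with extra negative punctures asymptotic to orbits $\tilde\gamma'_1,\dots,\tilde\gamma'_k$, while the bottom level consists of a $J$-holomorphic plane $u_0$ in $\hat W$ asymptotic to $\tilde\gamma'$ (inheriting the remaining marker and gradient-flow data to $q$) together with $J$-holomorphic augmentation planes capping off the $\tilde\gamma'_i$; weighted by the combinatorial factors $1/(\kappa_{\tilde\gamma'}\prod_i \kappa_{\tilde\gamma'_i})$ and the augmentation counts $e(\tilde\gamma'_i)$, these exactly reconstruct the off-diagonal part of $\Gamma\circ\p(\gamma'_p)$. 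Type III is a marker transition, in which the asymptotic evaluation $u(+\infty, 0)$ crosses the boundary of $W^u(\gamma'_p; f_{\gamma'})$ to enter $W^u(\gamma'_{p'}; f_{\gamma'})$ for the adjacent critical point $p'$ on the same orbit; with the appropriate multiplicity ($\kappa_{\gamma'}$ or $2$ according to whether $\gamma'$ is good or bad), this reproduces the on-diagonal $d^0$ component of $\Gamma\circ \p(\gamma'_p)$.

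By the gluing theorem applied in each of the three directions, every such configuration appears as the endpoint of exactly one end of some $1$-dimensional component of $\overline\cM(\gamma'_p, q)$; the resulting signed total therefore vanishes, which yields the identity $d_f\circ \Gamma(\gamma'_p) = \Gamma \circ \p(\gamma'_p)$. The main technical obstacle is the coherent orientation comparison between the Morse-type and SFT-type boundary strata: one has to verify that the orientation schemes of~\cite{FH-coherent}, carried over to the punctured setting in~\cite[\S4]{BOauto}, assign matching signs once the rescaling $\R^+$-action on the holomorphic plane and the contributions of the augmentation planes have been taken into account. The Morse breaking inherits its orientation from the fiber product $\ev_u \times_{W^s(q)}$ together with the concatenation sign for gradient trajectories, whereas the SFT breaking inherits its orientation from the coherent system in the symplectization together with the product orientation on the augmentation planes; checking that these two sign conventions are compatible under our grading choice (cohomological for $f$, homological for the contact side) is a delicate but routine verification.
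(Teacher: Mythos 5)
Your strategy---examining the boundary of $1$-dimensional moduli spaces $\cM(\gamma'_p,q)$ and sorting the degenerations into Morse breaking of $v$, SFT breaking of $u$, and marker transitions---is exactly the one the paper indicates, and the identifications for Type~I and the broad structure of Type~II are right. However, the multiplicity you assign to Type~III is wrong, and this is a genuine gap. You claim the marker transition contributes with multiplicity $\kappa_{\gamma'}$ when $\gamma'$ is good and $2$ when $\gamma'$ is bad, and that this matches $\Gamma\circ d^0$. But $d^0$ vanishes on good orbits: as stated explicitly in \S\ref{sec:GysinSH+inv}, $d^0(\gamma_m)=0$ if $\gamma$ is good and $\pm 2\gamma_M$ if $\gamma$ is bad. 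Concretely, when the asymptotic evaluation of $u$ reaches the complementary critical point of $f_{\gamma'}$, it can do so from either of the two arcs; with coherent orientations these two boundary ends of the $1$-dimensional moduli space cancel when $\gamma'$ is good, and only for bad orbits do they add to $\pm 2$. The factor $\kappa_{\gamma'}$ on good orbits that you have in mind belongs to the BV-operator $\Delta=\varphi_1$ of the $S^1$-complex structure (Lemma~\ref{lem:phivanish}), which raises the Morse index on $S_{\gamma'}$ and plays no role in the present non-equivariant chain-map statement. As written, your Type~III count would produce a nonzero extra term $\kappa_{\gamma'}\,\Gamma(\gamma'_{p'})$ for every good orbit, and the identity $\p_{\mathrm{Morse}}\Gamma=\Gamma\p$ would fail.

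Two further points deserve care but are less central. First, the Type~II matching glosses over the cascade structure of the differential on $BC_*$: after SFT breaking, the asymptotic markers of the cylinder and of the plane $u_0$ at $\tilde\gamma'$ coincide directly, whereas in the composition $\Gamma\circ\p$ one has the cylinder's bottom marker in $W^s(\tilde\gamma'_{p'};\nabla f_{\tilde\gamma'})$ and the plane's marker in $W^u(\tilde\gamma'_{p'};\nabla f_{\tilde\gamma'})$; reconciling the two requires the Morse decomposition of the diagonal of $S_{\tilde\gamma'}$, which is a standard but nontrivial step in the Morse--Bott compactification. Second, the combinatorial weights $1/(\kappa_{\tilde\gamma'}\prod_i\kappa_{\tilde\gamma'_i})$ you invoke belong to the differential on $CC_*^{lin}$ (generators are unparametrized Reeb orbits), not to $BC_*$ (generators are orbits decorated by a critical point of $f_{\gamma'}$), where the counts are of rigidified moduli spaces with marker constraints and those prefactors do not appear.
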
 

\begin{remark}Ê\emph{In case the almost complex structure $J$ is independent on time over the whole of $\hat W$, we have $\Gamma(\gamma'_m)=0$ for all $\gamma'\in\cP^0(\alpha)$ since moduli spaces $\cM(\gamma'_m,q)$ can never be rigid due to the $S^1$-invariance of the Cauchy-Riemann equation for holomorphic planes.}
\end{remark}

\begin{remark}[Mapping cone] \label{rmk:cone} \emph{We now fix conventions for the cone of a chain map. We work in a homological setting and our conventions are \emph{dual} to those of~\cite[Chapter~III]{Gelfand-Manin}. Given a homological chain complex $(A_\cdot,\p_A)$ and $k\in \Z$ we define $A[k]_\cdot:=(A_{\cdot+k},(-1)^k\p_A)$. Let $f:(A_\cdot,\p_A)\to (B_\cdot,\p_B)$ be a degree $0$ chain map, so that $f\p_A-\p_Bf=0$.  We define its cone to be the complex $C(f)_\cdot:= B[1]_\cdot \oplus A_\cdot =  B_{\cdot+1} \oplus A_\cdot$ with differential written in matrix form as 
$$
\left(\begin{array}{cc}  \p_{B[1]} & f \\Ê0 & \p_A \end{array}\right)=\left(\begin{array}{cc} -\p_B & f \\Ê0 & \p_A \end{array}\right).
$$
The short exact sequence of complexes 
$$
0\to B[1]_\cdot\to C(f)_\cdot\to A_\cdot\to 0
$$
gives rise to an exact triangle in homology for which the connecting homomorphism $H_\cdot(A)\to H_{\cdot-1}(B[1])$ is induced by $f$. In particular $C(f)$ is acyclic if and only if $f$ is a quasi-isomorphism. }
\end{remark}

We define the \emph{non-equivariant filled contact complex} as 
$$
\overline{BC}_*(\alpha,J,f,g):=C(\Gamma)_*
$$ 
and call the corresponding homology groups \emph{filled non-equivariant linearized contact homology groups} 
$$
\overline{NCH}_*^{lin}(\alpha,J,f,g)=H_*(\overline{BC}_*(\alpha,J,f,g)).
$$
We denote $NCH_*^{lin}(\alpha,J)=H_*(BC_*(\alpha,J))$ and call them \emph{non-equivariant linearized contact homology groups}. These groups depend \emph{a priori} on the choice of $\alpha,J,f,g$ and the proof of invariance with respect to these choices should again be a consequence of polyfold theory~\cite{Hofer-Polyfolds-survey,HWZ-polyfolds-I,HWZ-polyfolds-II,HWZ-polyfolds-III}. For readability we denote them by $\overline{NCH}_*^{lin}(M)$, respectively $NCH_*^{lin}(M)$. The exact triangle of the cone construction is the tautological triangle~\eqref{eq:taut-triangle-ncontact}
\begin{equation} \label{eq:taut-triangle-ncontact-2}
\xymatrix
@C=20pt
{
H_{*+n}(W,M)\ar[rr] & & 
\overline{NCH}_*^{lin}(M)  \ar[dl] \\ & NCH_*^{lin}(M) \ar[ul]^{[-1]}_-\Gamma  
}
\end{equation}

The next result is a direct consequence of the arguments developed in~\cite{BOcont}. 

\begin{proposition}
The tautological triangle of the cone construction~\eqref{eq:taut-triangle-ncontact-2} is isomorphic to the tautological exact triangle for symplectic homology~\eqref{eq:tautologicaltriangle}.
\end{proposition}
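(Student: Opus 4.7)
The plan is to realize both exact triangles at chain level as arising from short exact sequences of chain complexes, and then exhibit a chain-level isomorphism between them. Fix an autonomous Hamiltonian $H\in\cH_0^a$ for some $a\notin\mathrm{Spec}(M,\alpha)$; the action filtration splits the Floer chain groups as $SC_*(H,J)=SC_*^-(H,J)\oplus SC_*^+(H,J)$ with $SC_*^-$ generated by $\mathrm{Crit}(H)\subset W$ and $SC_*^+$ by the Morse--Bott circles $S_{\gamma'}$, and yields the short exact sequence
\begin{equation*}
0 \to SC_*^-(H,J)\to SC_*(H,J) \to SC_*^+(H,J)\to 0.
\end{equation*}
Passing to the direct limit $a\to\infty$ produces the tautological triangle~\eqref{eq:tautologicaltriangle}.

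The next step is to identify the three pieces with their contact-topological counterparts. Choosing $H$ so that $H|_W$ is a $C^2$-small autonomous Morse function compatible with the Morse data $(f,g)$, Lemma~\ref{lem:minus} together with the standard Floer--Morse comparison for small autonomous Hamiltonians gives a quasi-isomorphism $SC_*^-(H,J)\simeq MC_{*+n}(-f,g)$, which matches the $B[1]$-summand of the cone $C(\Gamma)$; the shift $*+n$ corresponds to the relation $\mu(q)=\ind(q;-H)-n$ of~\S\ref{sec:Hamiltformul}. For the non-constant orbits, the Morse--Bott machinery of~\cite{BOauto,BOcont}, with the auxiliary perfect Morse functions $f_{\gamma'}$ on the orbit circles $S_{\gamma'}$, gives an isomorphism $SC_*^+(H,J)\cong BC_*(\alpha,J)$ matching the $A$-summand. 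Both identifications respect continuation morphisms and pass to the direct limit.

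It remains to verify that, under these identifications, the off-diagonal part $d^{+-}:SC_*^+\to SC_{*-1}^-$ of the Floer differential coincides at chain level with the map $\Gamma$. Once this is established, $SC_*(H,J)$ becomes isomorphic as a filtered complex to the cone $C(\Gamma)=\overline{BC}_*(\alpha,J,f,g)$, so the two short exact sequences coincide and the induced exact triangles are intertwined. To check this, note that $d^{+-}$ counts Floer cylinders running from a Morse--Bott circle $S_{\gamma'}$ to a critical point $q$ of $H$ inside $W$, with $S_{\gamma'}$-incidence at $\gamma'_p$ imposed via $f_{\gamma'}$. Stretching the neck along a level $\{r=\mathrm{const}\}$ in the collar of $\partial W$ and invoking SFT compactness~\cite{BEHWZ}, such cylinders degenerate in the limit into a $J$-holomorphic plane in $\hat W$ asymptotic to $\gamma'$, attached along its $s=0$ point to a positive gradient trajectory of $f$ running into $q$, together possibly with intermediate pure $J_\infty$-holomorphic pieces in the symplectization. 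The transversality assumptions $(B_0)$ and $(\overline{A})$ of~\S\ref{sec:linconthom}--\S\ref{sec:filled-nonequivariant} are designed precisely so that these intermediate pieces are trivial cylinders and the limit configurations are in sign-preserving bijection with the elements of $\cM(\gamma'_p,q)$ defining $\Gamma$.

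The principal obstacle is this last identification: the neck-stretching analysis must rule out hidden breakings that contribute to $d^{+-}$ but not to $\Gamma$, and one must verify coherence of orientations at the interface between the Floer and the SFT setups. This is a direct non-equivariant analog--technically simpler, as no $S^1$-quotient appears--of the neck-stretching argument in~\cite{BOcont} establishing the isomorphism $CH_*^{lin}(M)\cong SH_*^{+,inv}(W)$; the same machinery applies once one replaces the `constant orbit' asymptote there by the gradient trajectory of $f$ ending at a critical point, which is the only difference between the two geometric configurations.
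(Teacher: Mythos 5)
Your overall strategy coincides with the paper's: realize both triangles as cone triangles, identify the three vertices chain-by-chain, and then compare the off-diagonal maps. Where you diverge is in the third step, and there is a genuine gap there.

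You assert that, after identifying $SC_*^-(H,J)$ with $MC_{*+n}(-f,g)$ and $SC_*^+(H,J)$ with $BC_*(\alpha,J)$, the off-diagonal component $d^{+-}$ of the Floer differential \emph{coincides at chain level} with $\Gamma$. This is too strong, and the tool you invoke cannot deliver it. The identification $SC_*^+(H,J)\cong BC_*(\alpha,J)$ is the nontrivial chain isomorphism $I_{H^a}$ from~\cite{BOcont}, defined by counting solutions of a mixed Floer/SFT equation; it is upper triangular with $\pm 1$ on the diagonal but emphatically not the identity. Moreover, $d^{+-}$ counts genuine Floer cylinders for $H$, while $\Gamma$ counts $J$-holomorphic planes attached to gradient trajectories of $f$; these are different moduli problems, and neck-stretching relates them via a one-parameter family of moduli spaces, not an equality. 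What neck-stretching (or, as the paper says, a ``standard homotopy of Floer data'' argument) produces is a chain \emph{homotopy} between $\Gamma$ and $\tilde\Gamma\circ I$, where $\tilde\Gamma=d^{+-}$. The paper states this precisely: the diagram relating $\Gamma$ and $\tilde\Gamma$ commutes \emph{up to homotopy}, and one then uses that chain-homotopic cone maps produce isomorphic exact triangles. Fortunately, the homotopy is exactly what your neck-stretching sketch yields, and it is all that is needed; so the argument can be repaired by replacing ``coincides at chain level'' with ``is chain homotopic to'' and then closing the loop with the stability of the cone triangle under chain homotopy of the defining map.
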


\begin{proof}[Sketch of proof.] The construction in~\cite{BOcont} provides an isomorphism 
$$I:BC_*(\alpha,J)\stackrel\sim \to SC_*^+(H,J)$$ 
using (a perturbation of) a Hamiltonian $H$ which depends only on the vertical coordinate $r$, which is superlinear on the symplectization $\hat W\setminus W$, and which is equal to a $C^2$-small Morse function $f$ on $W$. When written in matrix form with respect to the decomposition $SC_*^+(H,J)\oplus MC_{*+n}(-f,g)$ the differential on $SC_*(H,J)$ is lower triangular and $SC_*(H,J)$ is canonically identified with the cone $C(\tilde \Gamma)$ of a map $\tilde \Gamma:SC_*^+(H,J)\to MC_{*+n-1}(-f,g)$. Moreover, the tautological exact triangle~\eqref{eq:tautologicaltriangle} is isomorphic to the exact triangle of the cone. One then shows using standard arguments in Floer theory that the diagram 
$$
\xymatrix
{
BC_*(\alpha,J) \ar[d]_I \ar[r]^-\Gamma & MC_{*+n-1}(-f,g) \ar@{=}[d] \\
SC_*^+(H,J) \ar[r]_-{\tilde \Gamma} & MC_{*+n-1}(-f,g)
}
$$
is commutative up to homotopy, so that the cone exact triangles~\eqref{eq:taut-triangle-ncontact-2} and~\eqref{eq:tautologicaltriangle} are isomorphic.
\end{proof}

\subsubsection{Filled linearized contact homology} \label{sec:filledCHlin}

The content of this section is the following. We first define \emph{filled invariant symplectic homology groups} and \emph{filled linearized contact homology groups} by means of a cone construction, which directly implies the tautological triangle~\eqref{eq:taut-triangle-contact} and the Gysin triangle~\eqref{eq:Gysin-triangle-ncontact-filled} in the Introduction. We then prove that the morphisms involved in the cone constructions fit into diagrams that commute up to homotopy and which involve $S^1$-equivariant symplectic homology groups. This implies Theorem~\ref{thm:isomorphism} for the filled versions. 

We assume as in~\S\ref{sec:filled-nonequivariant} the existence of an almost complex structure $J$ on $\hat W$ which satisfies the transversality assumptions $(\overline A)$ and $(B_0)$. We fix a generic $C^2$-small Morse function $f$ on $W$ which admits $M$ as a regular level along which its gradient points outside $W$. Finally, we choose a generic Riemannian metric $g$ on $W$ induced by a compatible and time-independent almost complex structure. 

\noindent {\bf Notational convention.} \emph{For readability we suppress $J$ and $g$ from the notation of the various chain complexes and homology groups that appear in this section.}

We denote $(MC_*(-f),\p_{\mathrm{Morse}})$ the homological Morse complex for $-f$. This means that the generators are critical points $q$ of $-f$ graded by $|q|:=\ind(q;-f)$ and the differential $\p_{\mathrm{Morse}}$ is defined by counting rigid negative gradient lines of $-f$, i.e. solutions $v:\R\to \hat W$ of the equation $\dot v=\nabla f (v)$. We consider on $(MC_*(-f),\p_{\mathrm{Morse}})$ the trivial $S^1$-structure with $\varphi_0=\p_{\mathrm{Morse}}$ and $\varphi_j=0$, $j\ge 1$, and denote $(MC_*^{S^1}(-f),\p_{\mathrm{Morse}}^{S^1}):=(\Z[u]\otimes_\Z MC_*(-f),\mathrm{Id}\otimes \p_{\mathrm{Morse}})$ the corresponding chain complex for $S^1$-equivariant homology.

Let $H^a\in\cH_0^a$ be a Hamiltonian as in~\S\ref{sec:Hamiltformul} and assume that $H^a|_W=f$. The subcomplex $SC_*^{-,S^1}(H^a)$ is then identified with $(MC_{*+n}^{S^1}(-f),\p_{\mathrm{Morse}}^{S^1})$ and $SC_*^{S^1}(H^a)$ is isomorphic to the cone of a map 
$$
\tPhi_a:SC_*^{+,S^1}(H^a)\to MC_{*+n-1}^{S^1}(-f), \qquad \tPhi_a(u^k\otimes x)=\sum_{j=0}^k u^{k-j}\otimes \Phi_{j;a}(x).
$$

Recall from~\S\ref{sec:linconthom} that, for each $a>0$ which is not the period of a closed characteristic on $M$, we have an isomorphism $I_{H^a}:CH_*^{lin;\le a}(\alpha)\to SH_*^{+,inv}(H^a)$ induced by a chain level isomorphism $I_{H^a}:BC_*^{\le a}(\alpha)\to SC_*^+(H^a)$. We will construct in this section maps 
$$
\Psi_a:SC_*^{+,inv}(H^a)\to MC_{*+n-1}^{S^1}(-f), \qquad \Psi'_a:CC_*^{lin;\le a}(\alpha)\to MC_{*+n-1}^{S^1}(-f)
$$
such that the diagrams
\begin{equation} \label{diag:tPhi-Psi}
\xymatrix{
SC_*^{+,S^1}(H^a) \ar[r]^\Pi \ar[d]_{\tPhi_a} & SC_*^{+,inv}(H^a) \ar[d]^{\Psi_a} \\
MC_{*+n-1}^{S^1}(-f) \ar@{=}[r]_{\mathrm{Id}} & MC_{*+n-1}^{S^1}(-f)
}
\end{equation}
and
\begin{equation} \label{diag:Psi-Psi-prime}
\xymatrix{
CC_*^{lin;\le a}(\alpha) \ar[r]^{I_{H^a}} \ar[d]_{\Psi'_a} & SC_*^{+,inv}(H^a)Ê\ar[d]^{\Psi_a} \\
MC_{*+n-1}^{S^1}(-f) \ar@{=}[r]_{\mathrm{Id}} & MC_{*+n-1}^{S^1}(-f)
}
\end{equation}
commute up to homotopy, i.e. there exist $\sH_a:SC_*^{+,S^1}(H^a)\to MC_{*+n}^{S^1}(-f)$ and $\sH_a':CC_*^{lin;\le a}(\alpha)\to MC_{*+n}^{S^1}(-f)$ such that 
\begin{equation} \label{eq:tPhi-Psi}
\tPhi_a-\Psi_a\Pi=\p^{S^1}_{Morse}\sH_a-\sH_a\p^{S^1}, 
\end{equation}
\begin{equation} \label{eq:Psi-Psi-prime}
\Psi'_a-\Psi_a I_{H^a} = \p^{S^1}_{Morse}\sH_a'-\sH_a'\p.
\end{equation}
The homotopies $\sH_a$ and $\sH_a'$ determine obvious chain homotopies 
$$
C(\Psi'_a)\stackrel \sim\longrightarrow C(\Psi_a)\stackrel\sim\longleftarrow C(\tPhi_a).
$$ 
We then define \emph{filled invariant symplectic homology} $SH_*^{inv}(W)$ as a direct limit 
$$
SH_*^{inv}(W):=\lim_{a\to\infty} H_*(C(\Psi_a))
$$
with respect to generalized continuation maps as described in~\S\ref{sec:Hamiltformul}. 
We also define \emph{filled linearized contact homology} as a direct limit
$$
\overline{CH}_*^{lin}(M):=\lim_{a\to\infty} H_*(C(\Psi'_a))
$$
with respect to the canonical continuation maps given by enlarging the action window.  
Functoriality of the cone construction combined with arguments analogous to those of~\cite[Proposition~7]{BOcont} show that these groups satisfy the properties stated in the Introduction and in Theorem~\ref{thm:isomorphism}. As emphasized at the beginning of this section, these groups depend \emph{a priori} on the choices of $\alpha,J,f,g,H$; however, they are \emph{a posteriori} independent of these in view of the isomorphism with $S^1$-equivariant symplectic homology.

We now proceed with the construction of the maps $\tPhi_a$, $\Psi_a$, $\Psi'_a$, $\cH$, $\cH'$ and prove the identities~(\ref{eq:tPhi-Psi}--\ref{eq:Psi-Psi-prime}). \emph{For readability we write $H$ instead of $H^a$.}

\noindent \ref{sec:filledCHlin}(i) \emph{We first describe the components $\Phi_{j;a}:SC_*^{+,S^1}(H)\to MC_{*+n-1+2j}(-f)$, $j\ge 0$ of the map $\tPhi_a$.}
 
\noindent  We need to unfold the definitions in~\S\ref{sec:simplifying}. Recall that $SC_*^{+,S^1}(H)=\Z[u]\otimes_\Z SC_*^+(H)$. Since our Hamiltonian $H\in\cH_0^a$ is by assumption time-independent, the chain complex $SC_*^+(H)$ has to be understood in a Morse-Bott sense. 

Let us consider as a warm-up the idealized case where $H$ is a small Morse-Bott time-dependent perturbation of some $H'\in\cH_0^a$. The non-constant $1$-periodic orbits of $H$ are then of the form $\gamma_p$ with $\gamma\in\cP^+(H')$ and $p\in\mathrm{Crit}(f_\gamma)$, where $f_\gamma:S_\gamma\to\R$ is a collection of perfect Morse functions defined on the geometric images $S_\gamma$ of elements $\gamma\in \cP^+(H')$. We also fix a perfect Morse function $\varphi:\C P^\infty\to \R$ as in~\S\ref{sec:simplifying} with critical points $[Z_j]$, $j\ge 0$ of index $2j$ and denote $\tvarphi$ its lift to $S^\infty=\lim_j S^{2j+1}$. We denote $Z_j$ a distinguished point on the critical $S^1$-orbit of index $2j$ of $\tvarphi$.

Given $\og_p\in\cP(H)$ and $q\in\mathrm{Crit}(f)$ we define 
$$
\hat \cM^j(\og_p,q)
$$
to consist of pairs $(u,z)$ such that 
\begin{itemize} 
\item $u:\R\times S^1\to \widehat W$, $z:\R\to S^{2j+1}$  solve the equations 
$$\p_su+J^\theta_{z(s)}(u)(\p_\theta u-X^\theta_{H_{z(s)}}(u))=0,\qquad \dot z-\nabla \tvarphi(z)=0,$$
\item $(u,z)$ are subject to the asymptotic conditions 
$$(u(-\infty,\cdot),z(-\infty))\in S^1\cdot(\og_p,Z_j),\qquad (u(+\infty,\cdot),z(+\infty))\in S^1\cdot(q,Z_0).$$
\end{itemize} 
Here $H^\theta_z$ is an $S^1$-invariant extension of the Hamiltonian $H$ as in~\S\ref{sec:simplifying} and $J^\theta_z$ is a regular $S^1$-invariant almost complex structure. The group $\R\times S^1$ acts freely on $\hat\cM^j(\og_p,q)$ by shifts in the $s$-variable and by simultaneous rotations on $(u,z)$. We denote the quotient by 
$$
\cM^j(\og_p,q).
$$ 
In case $\dim\,\cM^j(\og_p,q)=0$ each element of the moduli space $\cM^j(\og_p,q)$ inherits a sign from a choice of coherent orientations and we denote $\#\cM^j(\og_p,q)$ the algebraic count of its elements. We then have by definition
$$
\Phi_{j;a}(\og_p):=\sum_{q\, : \, \dim\, \cM^j(\og_p,q)=0} \#\cM^j(\og_p,q)q.
$$

Let us now turn to the Morse-Bott description of the maps $\Phi_{j;a}$ for $H\in\cH_0^a$. Recall that, on the set $\hat W\setminus W=\{r\ge 1\}$, the Hamiltonian $H$ is equal to a convex function of $r$ which is linear for $r\gg 1$ and has small enough second derivative. Together with the transversality assumptions $(\overline A)$ and $(B_0)$, this guarantees that our time-independent almost complex structure $J$ is regular for the relevant moduli spaces of Floer trajectories for $H$ (see~\cite[\S5.1]{BOcont}). We further assume that $H\equiv 0$ on $W$ and we treat the Morse function $f$ as a perturbation of this degenerate situation. We also choose a generic perfect Morse function $f_\gamma:S_\gamma\to\R$ for every circle of nonconstant orbits $S_\gamma\subset \cP(H)$. Given $\og\in\cP(H)$, $p\in\mathrm{Crit}(f_\og)$, $q\in\mathrm{Crit}(f)$, we define 
$$
\hat\cN^j(\og_p,q)
$$
to consist of tuples $(u,z,v)$ such that 
\begin{itemize}
\item $u:\R\times S^1\to \hat W$, $z:\R\to S^{2j+1}$ solve the equations 
$$\p_su+J^\theta_{z(s)}(u)(\p_\theta u-X^\theta_{H_{z(s)}}(u))=0,\qquad \dot z -\nabla\tvarphi(z)=0$$ 
and are subject to the asymptotic conditions 
$$(u(-\infty,\cdot),z(-\infty))\in S^1\cdot(W^u(p),Z_j),$$ 
$$(u(+\infty,\cdot),z(+\infty))\in S^1\cdot (pt,Z_0)$$ 
for some $pt\in W$,
\item $v:[0,\infty[\to W$ solves the equation 
$$\dot v=\nabla f(v)$$ 
and is subject to the asymptotic condition 
$$v(+\infty)=q,$$
\item $u(+\infty,\cdot)\equiv pt=v(0)\in W$.
\end{itemize}
Here $(H^\theta_z,J^\theta_z)$ is a regular $S^1$-invariant pair which extends $(H,J)$. The group $\R\times S^1$ acts freely on $\hat\cN^j(\og_p,q)$ by shifts in the $s$-variable and by rotations on $(u,z)$. We denote the quotient by  
$$
\cN^j(\og_p,q).
$$
In the case of dimension zero one associates from coherent orientations a sign to each element of $\cN^j(\og_p,q)$. The space $\cN^j(\og_p,q)$ is then in one-to-one bijective correspondence preserving signs with $\cM^j(\og_p,q)$ for a Morse-Bott perturbation of $H$, see~\cite{BOauto} or~\cite{Bourgeois-thesis-Fields}. (Strictly speaking the space $\hat \cN^j(\og_p,q)$ should be defined to contain configurations of Morse-Bott broken trajectories with an arbitrary number of sublevels as in~\cite[pp.~90-91]{BOauto}, but in the case of dimension zero the only configurations that actually appear are the ones that we took into account in our definition.) In particular we have 
$$
\Phi_{j;a}(\og_p)=\sum_{q\, : \, \dim\, \cN^j(\og_p,q)=0}Ê\#\cN^j(\og_p,q)q.
$$

If we further assume that $J^\theta_z$ is a small perturbation of our regular $J$ and $H^\theta_z$ is a small perturbation of our time-independent Hamiltonian $H$, we actually have 
$$
\Phi_{j;a}=0, \quad j\ge 1,
$$
so that the map $\tPhi_a:SC_*^{+,S^1}(H)\to MC_{*+n-1}^{S^1}(-f)$ is given by 
$$
\tPhi_a(u^k\otimes x)=u^k\otimes \Phi_{0;a}(x).
$$
Indeed, the dimension formula for $\cN^j(\og_p,q)$ is 
\begin{eqnarray*}
\dim\, \cN^j(\og_p,q) & = & |\og_p|+n-|q|+2j+1 - 2 \\
& = & |\og| +n- |q| + \ind(p;f_\og) +2j-1\\ 
& \ge & 1 + 0 + 2j-1 \\
& = & 2j.
\end{eqnarray*}
Our transversality assumptions guarantee that, in case $\cN^j(\og_p,q)$ is non-empty, we have $|\og|+n-|q|\ge 1$ because pairs $(u,v)$ that are involved in $\cN^j(\og_p,q)$ are $C^\infty$-close to pairs $(u',v')$ subject to the same asymptotic conditions and solving the same equations with $(H^\theta_z,J^\theta_z)$ replaced by $(H,J)$. Thus the dimension of $\cN^j(\og_p,q)$ is $>0$ whenever $j\ge 1$ and the space is non-empty.

In the case $j=0$ the dimension is zero only if $|\og|+n-|q|=1$ and $\ind(p;f_\og)=0$. Thus $p=M$ is a maximum, the component $z$ is constant, and $u$ viewed as a map $u:\C\to\hat W$ is a perturbed pseudo-holomorphic plane that is rigidified by requiring $u(0)$ to belong to $W^s(q;\nabla f)$. We can in particular give the following alternative description of $\Phi_{0;a}$. Using the same notation as above for $\og_p$ and $q$ we define 
$$
\hat \cN_{\Phi_0}(\og_p,q) 
$$
to consist of pairs $(u,v)$ such that  
\begin{itemize}
\item $u:\R\times S^1\to \hat W$ solves $$\p_su+J(u)(\p_\theta u-X_H(u))=0$$ and $u(-\infty,\cdot)\in W^u(p)$,
\item $v:[0,\infty[\to W$ solves $$\dot v=\nabla f(v)$$ and $v(+\infty)=q$,
\item $u(+\infty,\cdot)\equiv v(0)\in W$. 
\end{itemize}
The additive group $\R$ acts freely on $\hat \cN_{\Phi_0}(\og_p,q)$ by shifts in the $s$-variable and we denote the quotient by 
$$
\cN_{\Phi_0}(\og_p,q).
$$
Then $\cN_{\Phi_0}(\og_p,q)$ is in one-to-one bijective correspondence preserving signs with $\cN^0(\og_p,q)$ and we obtain 
$$
\Phi_{0;a}(\og_p)=\sum_{q\,:\, \dim\, \cN_{\Phi_0}(\og_p,q)=0}\#\cN_{\Phi_0}(\og_p,q)q.
$$

\noindent \ref{sec:filledCHlin}(ii) \emph{We construct the map $\Psi_a:SC_*^{+,inv}(H)\to MC_{*+n-1}^{S^1}(-f)$.} 

\noindent We assume henceforth the Hamiltonian $H\in\cH_0^a$ to be as in the definition of $\hat \cN^j(\og_p,q)$, i.e. with small enough slope on $\{r\ge 1\}$ and equal to $0$ on $W$.  
Given $q\in\mathrm{Crit}(f)$ and $\gamma\in\cP(H)$ such that the underlying Reeb orbit $\gamma'$ is good, we define 
$$
\hat\cM_\Psi(S_\gamma,q)
$$
to consist of triples $(u,\theta_0,v)$ such that 
\begin{itemize}
\item $u:\R\times S^1\to\hat W$ solves $$\p_su+J(u)(\p_\theta u-X_H(u))=0$$ and $u(-\infty,\cdot)\in S_\gamma$,
\item $\theta_0\in S^1$ is such that $u(-\infty,\cdot)=\gamma(\cdot-\theta_0)$,
\item $v:[0,\infty[\to W$ solves $$\dot v=\nabla f(v)$$ and $v(+\infty)=q$,
\item $u(+\infty,\cdot)\equiv v(0)\in W$.
\end{itemize}
The group $\R\times S^1$ acts freely on $\hat\cM_\Psi(S_\gamma,q)$ by shifts in the $s$-variable on $u$ and by simultaneous rotation on $(u,\theta_0)$. We denote the quotient by
$$
\cM_\Psi(S_\gamma,q).
$$
The virtual dimension of $\cM_\Psi(S_\gamma,q)$ is 
\begin{eqnarray*}
\dim\,\cM_\Psi(S_\gamma,q) & = & |\gamma|+n+1 -\mathrm{codim}\, W^s(q;\nabla f) - 2 \\
& = & |\gamma|-|q| + n-1.
\end{eqnarray*}
In the case of dimension zero each element of $\cM_\Psi(S_\gamma,q)$ inherits a sign from coherent orientations and we denote $\#\cM_\Psi(S_\gamma,q)$ the algebraic count of its elements. We then define 
$$
\Psi_a:SC_*^{+,inv}(H)\to MC_{*+n-1}(-f)\hookrightarrow MC_{*+n-1}^{S^1}(-f)
$$
by
$$
\Psi_a(S_\gamma):=\sum_{q\, : \, \dim\, \cM_\Psi(S_\gamma,q)=0} \#\cM_\Psi(S_\gamma,q)q.
$$

\noindent \ref{sec:filledCHlin}(iii) \emph{We construct the homotopy} $\sH_a:SC_*^{+,S^1}(H)\to MC_{*+n}^{S^1}(-f)$.

\noindent As in~\ref{sec:filledCHlin}(ii) we assume $H$ to have small enough second derivative for $r\ge 1$ and to be equal to $0$ on $W$. 
We first need to set up some notation. As in~\S\ref{sec:S1-CH} we consider a smooth increasing function $\beta:\R\to [0,1]$ such that $\beta(s)=0$ for $s\le -1$ and $\beta(s)=1$ for $s\ge 0$. We also consider a smooth retract $r:[0,1]\times \cJ\to \cJ$ such that $r(0,\cdot)=\mathrm{Id}_\cJ$ and $r(1,\cdot)$ is the constant map equal to $J$. Given an $S^1$-invariant family $J^\theta_z$ we define $J^\theta_{z,\beta}:=r(\beta(s),J^\theta_z)$. Note that this is an $S^1$-invariant family for every $s\in\R$. Similarly, given an $S^1$-invariant extension $H^\theta_z$ of $H$ we denote $H^\theta_{z,\beta}:=(1-\beta(s))H^\theta_{z,\beta} + \beta(s)H$. 

For every $\gamma\in\cP(H)$, $p\in\mathrm{Crit}(f_\gamma)$ and $j\ge 0$ we fix a generic point $P_j(\gamma_p)\in S^1\cdot(\gamma_p,Z_j)$. This determines uniquely an element $\tau_j(\gamma_p)\in S^1$ by the relation $P_j(\gamma_p)=\tau_j(\gamma_p)\cdot(\gamma_p,Z_j)$. We denote $P_j(W^u(\gamma_p)):=\tau_j(\gamma_p)\cdot (W^u(\gamma_p)\times \{Z_j\})$.

Given $j\ge 0$, $\og\in\cP(H)$, $p\in\mathrm{Crit}(f_\og)$, $q\in\mathrm{Crit}(f)$, we define 
$$
\cM^j_\sH(\og_p,q)
$$
to consist of triples $(u,z,v)$ such that:
\begin{itemize}
\item $u:\R\times S^1\to \hat W$ solves $$\p_s u +J^\theta_{z,\beta}(u)(\p_\theta u - X^\theta_{H_{z,\beta}}(u))=0$$ and $u(+\infty,\cdot)$ is a constant orbit of $H$ in $W$,
\item $z:]-\infty,0]\to S^{2j+1}$ solves $$\dot z=\nabla \tvarphi(z),$$
\item $(u(-\infty,\cdot),z(-\infty))\in P_j(W^u(\og_p))$,
\item $v:[0,\infty[\to W$ solves $$\dot v=\nabla f(v)$$ and $v(+\infty)=q$,
\item $u(+\infty,\cdot)\equiv v(0)\in W$.
\end{itemize}
The virtual dimension of this moduli space is 
\begin{eqnarray*}
\dim\, \cM^j_\sH(\og_p,q) & = & |\og_p|+n-|q|+2j+1 -1 \\
& = & |\og|+n-|q|+\ind(p;f_\og) + 2j. 
\end{eqnarray*}
As usual, in case $\cM^j_\sH(\og_p,q)$ is $0$-dimensional its elements inherit a sign from coherent orientations and we denote $\#\cM^j_\sH(\og_p,q)$ their algebraic count. The map $\sH_a$ is then defined by 
$$
\sH_a(u^k\otimes \og_p):=\sum_{j=0}^k u^{k-j}\otimes \sum_{q\,:\, \dim\,\cM^j_\sH(\og_p,q)=0} \#\cM^j_\sH(\og_p,q)q.
$$
As above, under our standing transversality assumptions and for $(H^\theta_z,J^\theta_z)$ a small perturbation of $(H,J)$ we infer that $|\og|+n-|q|\ge 0$ in case $\cM^j_\sH(\og_p,q)$ is nonempty. Thus $\cM^j_\sH(\og_p,q)$ can be $0$-dimensional only if $p=M$, $j=0$ and $|\og|+n-|q|= 0$. Thus 
$$
\sH_a(u^k\otimes \og_p)=u^k\otimes \sum_{q\,:\, \dim\,\cM^0_\sH(\og_p,q)=0} \#\cM^0_\sH(\og_p,q)q.
$$

\noindent \ref{sec:filledCHlin}(iv) \emph{We prove equation~\eqref{eq:tPhi-Psi}.}

\noindent We examine the boundary of the $1$-dimensional moduli spaces $\cM^j_\sH(\og_p,q)$. There can be four types of breaking for $1$-parameter families of configurations $(u,z,v)\in\cM^j_\sH(\og_p,q)$: 

\noindent 1) breaking at $-\infty$ in the domain of $(u,z)$ on an orbit $S^1\cdot(\gamma_r,Z_i)$, with $\gamma\in\cP^+(H)$ and $r\in\mathrm{Crit}(f_\gamma)$. This gives rise to a boundary term equal to $\sH_a\circ\p^{S^1}$;

\noindent 2) breaking at $-\infty$ in the domain of $(u,z)$ on an orbit $S^1\cdot(q',Z_i)$, with $q'\in\mathrm{Crit}(f)$. We claim that this gives rise to a boundary term equal to $\tPhi_a$. Clearly, this type of breaking gives rise to a boundary term equal to a composition $\chi\circ\tPhi_a$, where 
$$\chi:MC_*^{S^1}(-f)\to MC_*^{S^1}(-f)$$ 
is described by a count of rigid elements in moduli spaces $\cM^j_\chi(q',q)$ defined as follows. Given $q',q\in\mathrm{Crit}(f)$ the moduli space $\cM^j_\chi(q',q)$ consists of pairs $(v,z)$ such that 
\begin{itemize}
\item $v:\R\to W$ solves $\dot v =\nabla f(v)$,
\item $z:]-\infty,0]\to S^{2j+1}$ solves $\dot z=\nabla \tvarphi(z)$, 
\item $(v(-\infty),z(-\infty))=P_j(q')$,
\item $v(+\infty)=q$.
\end{itemize}
Here $P_j(q')$ is a fixed generic point on the $S^1$-orbit $S^1\cdot(q',Z_j)=\{q'\}\times S^1\cdot Z_j$. The virtual dimension of this moduli space is 
$$\dim\,\cM^j_\chi(p,q)=|q'|-|q|+2j+1-1=|q'|-|q|+2j.$$ 
By transversality for the gradient flow of $f$ we infer that $|q'|-|q|\ge 0$ if the moduli space $\cM^j_\chi(q',q)$ is nonempty. Thus in order for $\cM^j_\chi(q',q)$ to be non-empty and $0$-dimensional we must necessarily have $j=0$, $q'=q$ and the pair $(v,z)$ must be constant equal to $P_0(q)$. As a consequence $\chi=\mathrm{Id}$ and $\chi\circ\tPhi_a=\tPhi_a$.

\noindent 3) breaking at $+\infty$ in the domain of $u$ on an orbit $\gamma\in\cP^+(H)$. We claim that this gives rise to a boundary term equal to $\Psi_a\circ \Pi$. Indeed, this gives rise to a boundary term equal to a composition $\Psi_a\circ \tPi$, where $\tPi:SC_*^{+,S^1}\to SC_*^{+,inv}$ is defined by a count of rigid elements in moduli spaces $\cM^j_\tPi(\og_p,S_\gamma)$ defined as follows. Given $\gamma\in\cP^+(H)$ the moduli space $\cM^j_\tPi(\og_p,S_\gamma)$ consists of pairs $(u,z)$ such that: 
\begin{itemize}
\item $u:\R\times S^1\to \hat W$ solves $$\p_s u +J^\theta_{z,\beta}(u)(\p_\theta u - X^\theta_{H_{z,\beta}}(u))=0$$ and $u(+\infty,\cdot)\in S_\gamma$,
\item $z:]-\infty,0]\to S^{2j+1}$ solves $$\dot z=\nabla \tvarphi(z),$$
\item $(u(-\infty,\cdot),z(-\infty))\in P_j(W^u(p))$.
\end{itemize}
The map $\tPi$ is then defined by 
$$
\tPi(u^k\otimes \og_p):=\sum_{j=0}^k u^{k-j}\otimes \sum_{\scriptsize\begin{array}{c}Ê\gamma \mbox{ good} \\ \dim\, \cM^j_\tPi(\og_p,S_\gamma)=0\end{array}}Ê\# \cM^j_\tPi(\og_p,S_\gamma) \langle S_\gamma \rangle.
$$
The virtual dimension of $\cM^j_\tPi(\og_p,S_\gamma)$ is 
$$\dim\,\cM^j_\tPi(\og_p,S_\gamma)=|\og_p|-|S_\gamma|+2j+1 -1=|\og|-|\gamma|+\ind(p;f_\og)+2j.$$ 
Under our standing transversality assumptions and if $(H^\theta_z,J^\theta_z)$ is a small enough perturbation of $(H,J)$, we have $|\og|-|\gamma|\ge 0$ if $\cM^j_\tPi(\og_p,S_\gamma)$ is non-empty. Thus in order for $\cM^j_\tPi(\og_p,S_\gamma)$ to be non-empty and $0$-dimensional we must necessarily have $j=0$, $\og=\gamma$, $p=M$, and the maps $u$ and $z$ must be constant with $(u,z)\equiv P_0(\gamma_M)$. Thus $\tPi=\Pi$. 

\noindent 4) breaking at $+\infty$ in the domain of $v$ on a critical point $q'\in\mathrm{Crit}(f)$. This gives rise to a boundary term equal to $\p_{Morse}\circ\sH_a=\p^{S^1}_{Morse}\circ\sH_a$.

We obtain 
$$
\Psi_a\Pi+\p^{S^1}_{Morse}\sH_a -\tPhi_a-\sH_a\p^{S^1}=0,
$$
which is equivalent to~\eqref{eq:tPhi-Psi}. \hfill{$\square$}

\noindent \ref{sec:filledCHlin}(v) \emph{We construct the maps $\Psi'_a:CC_*^{lin;\le a}(\alpha)\to MC_{*+n-1}^{S^1}(-f)$ and $\cH'_a:CC_*^{lin;\le a}(\alpha)\to MC_{*+n}^{S^1}(-f)$, and we prove~\eqref{eq:Psi-Psi-prime}.} 

\noindent Recall the notations in the definition of the complex $CC_*^{lin;\le a}(\alpha)$ and the fact that we fix a reference point on the geometric image of each closed Reeb orbit $\gamma'\in\cP(\alpha)$. Given $\gamma'\in\cP(\alpha)$ we denote by $\cM_0(\gamma',\emptyset)$ the moduli space of $J$-holomorphic planes in $\hat W$ with a marked point at $0\in\C$ and with asymptotic tangent direction at $\infty$, which are asymptotic to $\gamma'$ and converge to the reference point along the tangent direction. The moduli space $\cM_0(\gamma',\emptyset)$ has virtual dimension $\mu(\gamma')+n-1$ and carries an evaluation map $\ev_0:\cM_0(\gamma',\emptyset)\to \hat W$ at the marked point. Given $q\in\mathrm{Crit}(f)$ we denote 
$$
\cM(\gamma',q):=\cM_0(\gamma',\emptyset)\ _{\ev_0}\!\!\times W^s(q;\nabla f).
$$
The virtual dimension of $\cM(\gamma',q)$ is $\mu(\gamma')-|q|+n-1$. In the case of dimension $0$ the elements of $\cM(\gamma',q)$ inherit a sign from coherent orientations and we denote by $\#\cM(\gamma',q)$ their algebraic count. We then define 
$$
\Psi'_a:CC_*^{lin;\le a}(\alpha)\to MC_{*+n-1}(-f)\hookrightarrow MC_{*+n-1}^{S^1}(-f)
$$
by 
$$
\Psi'_a(\gamma'):=\sum_{q\, : \, \dim\, \cM(\gamma',q)=0} \#\cM(\gamma',q)q.
$$
That $\Psi'_a$ is a chain map follows by inspecting the boundary of the $1$-dimensional moduli spaces $\cM(\gamma',q)$. That the diagram~\eqref{diag:Psi-Psi-prime} is commutative up to homotopy follows from the arguments of~\cite[\S6]{BOcont} and the usual ``homotopy of homotopies'' argument in Floer theory. 

Let us make explicit this argument for completeness. Recall from~\S\ref{sec:Hamiltformul} that the map $I_H$ is defined by a count of elements in moduli spaces $\cM_I(\og',S_\ug)$, $\og'\in\cP(\alpha)$, $\ug\in\cP^+(H)$ defined in~\eqref{eq:MI}. 

Given $q\in\mathrm{Crit}(f)$ we define moduli spaces $\cM(\og',q;H^\rho)$ to consist of pairs $(u,v)$ such that:
\begin{itemize}
\item $u:\R\times S^1\to \hat W$ solves 
$$
\p_su+J(\p_\theta u - X_{H^\rho}(u))=0,
$$ 
it is asymptotic at $-\infty$ to $\og'$ and converges to the reference point of $\og'$ along the tangent direction $\R\times \{0\}$, and it is asymptotic at $+\infty$ to a constant orbit of $H$ in $W$,
\item $v:[0,\infty[\to W$ solves 
$$\dot v=\nabla f(v)$$ 
and $v(+\infty)=q$,
\item $u(+\infty,\cdot)\equiv v(0)\in W$.
\end{itemize}
The expected dimension of $\cM(\og',q;H^\rho)$ is $\mu(\og')-|q|+n-1$. The count of elements in this moduli space describes the composition $\Psi_a\circ I_H$.

Let now $H^\rho_\lambda$, $\lambda\in[0,1]$ be an increasing homotopy from $0$ to $H^\rho$. 
The moduli space 
$$
\cM^{[0,1]}_{\sH'}(\og',q):=\bigsqcup_{\lambda\in[0,1]}\{\lambda\}\times \cM(\og',q;H^\rho_\lambda)
$$
has expected dimension $\mu(\og')-|q|+n$. We define  
$$
\sH_a':CC_*^{lin;\le a}(\alpha)\to MC_{*+n}(-f)\hookrightarrow MC_{*+n}^{S^1}(-f)
$$
by 
$$
\sH_a'(\og'):=\sum_{q\,:\, \dim\, \cM^{[0,1]}_{\sH'}(\og',q)=0} \#\cM^{[0,1]}_{\sH'}(\og',q)q.
$$

The relation~\eqref{eq:Psi-Psi-prime} is established by examining the boundary of $1$-dimensional moduli spaces $\cM^{[0,1]}_{\sH'}(\og',q)$. At $\lambda=1$ we obtain $\Psi_a\circ I_H$, at $\lambda=0$ we obtain $\Psi'_a$, and inner boundary components correspond to $\p_{Morse}\circ \sH_a'- \sH_a'\circ \p$. Thus 
$$
\Psi'_a-\Psi_a \circ I_H = \p_{Morse}\circ \sH_a'- \sH_a'\circ \p.
$$

\vspace{-9mm}

\hfill{$\square$}

\smallskip

This completes the proof of the identities~(\ref{eq:tPhi-Psi}--\ref{eq:Psi-Psi-prime}), and also the proof of Theorem~\ref{thm:isomorphism} for the case of filled linearized contact homology groups.

\section{Applications, computations}Ê\label{sec:applications}

\subsection{Applications}

\subsubsection{Vanishing theorem} 

The following result was already stated by Seidel in~\cite{Seidel07}. 

\begin{theorem}[Seidel~\cite{Seidel07}] \label{thm:vanishing}
Given the symplectic manifold $(W,\omega)$, we have that $SH_*(W,\omega)=0$ if and only if $SH_*^{S^1}(W,\omega)=0$.
\end{theorem}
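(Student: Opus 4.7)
The plan is to treat the two implications separately. The direction $SH_*^{S^1}(W)=0 \Rightarrow SH_*(W)=0$ is immediate from the Gysin exact triangle of Theorem~\ref{thm:Gysin-spec}(i). Unrolled, it reads
\begin{equation*}
\cdots \to SH_{k+1}^{S^1}(W) \xrightarrow{S} SH_{k-1}^{S^1}(W) \to SH_k(W) \to SH_k^{S^1}(W) \to \cdots,
\end{equation*}
and vanishing of the two $SH_*^{S^1}$ neighbors forces $SH_k(W)=0$ for every $k$ by exactness.

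For the converse, the Gysin triangle alone only shows that the map $S$ becomes an isomorphism on $SH_*^{S^1}(W)$, which is not enough to conclude vanishing without additional input. I would argue at the chain level, using the presentation from \S\ref{sec:simplifying}: for $H\in\cH$ one has $SC_*^{S^1}(H) = \Z[u]\otimes_\Z SC_*(H)$ with differential $\tilde\partial^{S^1}(u^k\otimes x) = \sum_{j=0}^k u^{k-j}\otimes \varphi_j(x)$ and $\varphi_0 = d$. Consider the exhaustive filtration by $u$-power
\begin{equation*}
F_n SC_*^{S^1}(H) := \langle 1, u, \dots, u^n\rangle \otimes_\Z SC_*(H), \qquad n\ge 0.
\end{equation*}
It is preserved by $\tilde\partial^{S^1}$, and the associated graded satisfies $F_n/F_{n-1}\simeq SC_*(H)[-2n]$ with the ordinary Floer differential. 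The key claim, proved by induction on $n$, is the following: if $SH_*(W)=0$, then for every $H\in\cH$ and every cycle $b\in F_n SC_*^{S^1}(H)$ there exists $H'\ge H$ in $\cH$ such that the image of $b$ under continuation is a boundary in $SC_*^{S^1}(H')$. Writing $b=\sum_{k=0}^n u^k\otimes b_k$, the $u^n$-coefficient of $\tilde\partial^{S^1} b=0$ is exactly $d(b_n)$, so $[b_n]\in SH_*(H)$; by hypothesis there exist $H'\ge H$ and $c_n\in SC_*(H')$ with $b_n = d(c_n)$, and then $b - \tilde\partial^{S^1}(u^n\otimes c_n)$ has vanishing $u^n$-coefficient and hence lies in $F_{n-1}SC_*^{S^1}(H')$, reducing $n$. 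The base case $n=0$ is identical without the subtraction: $b=1\otimes b_0$ becomes $\tilde\partial^{S^1}(1\otimes c_0)$. Passing to the direct limit over $H$ yields $SH_*^{S^1}(W)=0$.

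The point requiring most care is to check that continuation maps $SC_*^{S^1}(H)\to SC_*^{S^1}(H')$ induced by increasing homotopies preserve the filtration $F_\cdot$, so that the chain-level induction descends to the direct limit. This should follow from the explicit form of $S^1$-equivariant chain maps in Definition~\ref{defi:S1chainmap}, $u^\ell\otimes x \mapsto \sum_{i\le \ell} u^{\ell-i}\otimes \Phi_i(x)$, which cannot raise the $u$-power; I would verify this compatibility at the outset, after which the inductive argument above completes the proof.
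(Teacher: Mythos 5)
Your proof is correct. The direction $SH_*^{S^1}(W)=0\Rightarrow SH_*(W)=0$ is handled exactly as the paper does, by the Gysin triangle. For the converse you take a genuinely more hands-on route than the paper: the paper simply invokes the spectral sequence of Theorem~\ref{thm:Gysin-spec}(ii), whose $E^2$ page is $H_p(BS^1)\otimes SH_q(W)$ and hence vanishes when $SH_*(W)=0$. That spectral sequence is precisely the one arising from the filtration $F_n\tilde C=\langle 1,u,\dots,u^n\rangle\otimes C_\cdot$ that you write down (cf.\ the proof of Proposition~\ref{prop:Gysin+spectral}(ii)), so the two arguments are the same in substance, but you unpack the convergence at the chain level instead of citing the theorem. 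What your version buys is transparency on a point the paper relegates to a remark: the spectral sequence has $E^2$ page $H_p(BS^1)\otimes SH_q(H)$ for each \emph{fixed} $H$, and one must still pass to the direct limit over $H\in\cH$ and know that the direct limit of the abutments is the abutment of the direct limit. By interleaving the continuation maps $\tilde\Phi:SC_*^{S^1}(H)\to SC_*^{S^1}(H')$ (which, as you correctly note from Definition~\ref{defi:S1chainmap}, never raise the $u$-power and hence preserve $F_\cdot$) with the induction on $n$, you sidestep this issue entirely. The only point worth tightening in your write-up is notational: after choosing $H'\ge H$ so that $[b_n]$ dies, what satisfies $d(c_n)=\text{(image of }b_n\text{)}$ lives in $SC_*(H')$, so the subtraction $\tilde\Phi(b)-\tilde\partial^{S^1}(u^n\otimes c_n)$ should be performed after pushing $b$ forward by the continuation map, as you implicitly do; made explicit, the argument is airtight.
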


\begin{proof}
Assuming $SH_*(W)=0$ we obtain that $SH_*^{S^1}(W)=0$ from the spectral sequence of Theorem~\ref{thm:Gysin-spec}(ii). Conversely, if $SH_*^{S^1}(W)=0$ we obtain that $SH_*(W)=0$ from the Gysin triangle in Theorem~\ref{thm:Gysin-spec}(i). 
\end{proof}

This theorem can be used to clarify the relationship between two concepts introduced by Viterbo in~\cite{Viterbo99}. Let us say that a convex symplectic manifold $W^{2n}$ satisfies the \emph{Strong Algebraic Weinstein Conjecture (SAWC)} if the map $H_{2n}(W,\partial W)\to SH_n(W)$ vanishes. Letting $\mu_{2n}\in H_{2n}(W,\partial W)$ be the fundamental class and $u_k$ be a generator of $H_{2k}(BS^1)$, $k\ge 0$, we say that it satisfies the \emph{Strong Equivariant Algebraic Weinstein Conjecture (EWC)} if, for all $k\ge 0$, the element $u_k\otimes \mu_{2n}$ lies in the kernel of the map $H_{2n+2k}^{S^1}(W,\partial W)\to SH_{n+2k}^{S^1}(W)$. We proved the following result in~\cite{BOGysin}, and we give here an alternative proof. 

\begin{proposition}[{\cite[Corollary~1.4]{BOGysin}}]
SAWC $\Longrightarrow$ EWC.
\end{proposition}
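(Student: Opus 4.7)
The plan is to prove SAWC $\Rightarrow$ EWC by induction on $k$, using the compatibility of the Gysin exact triangles in Theorem~\ref{thm:Gysin-spec}(i) (for $SH$ and for $SH^-$) with the tautological map, combined with the spectral sequence of Theorem~\ref{thm:Gysin-spec}(ii). Denote by $\phi\colon H^{S^1}_{*+n}(W,\p W)\to SH^{S^1}_*(W)$ and $\phi_0\colon H_{*+n}(W,\p W)\to SH_*(W)$ the tautological maps from~\eqref{eq:tautologicaltriangle}, identifying $SH^-_{*}\simeq H_{*+n}(W,\p W)$ and $SH^{-,S^1}_{*}\simeq H^{S^1}_{*+n}(W,\p W)$ via Lemma~\ref{lem:minus}. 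Since the circle acts trivially on $(W,\p W)$, the top Gysin triangle has vanishing connecting map $B_-=0$ and its map $S_-$ acts by $u_k\otimes \mu_{2n}\mapsto u_{k-1}\otimes\mu_{2n}$ (with the convention $u_{-1}=0$).

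First I would establish the base case $k=0$. Commutativity of the tautological map with the inclusion $I$ of the Gysin triangle yields $\phi(u_0\otimes \mu_{2n})=I\phi_0(\mu_{2n})=I(0)=0$, the last equality being SAWC. For the inductive step, assuming $\phi(u_{j}\otimes \mu_{2n})=0$ for $j<k$, naturality of $S$ gives $S\phi(u_k\otimes \mu_{2n})=\phi(S_-(u_k\otimes \mu_{2n}))=\phi(u_{k-1}\otimes \mu_{2n})=0$, so $\phi(u_k\otimes \mu_{2n})$ lies in $\ker S=\mathrm{im}\,I$. In parallel, naturality of $B$ combined with $B_-=0$ yields $B\phi(u_k\otimes \mu_{2n})=\phi_0\circ B_-(u_k\otimes \mu_{2n})=0$, so $\phi(u_k\otimes \mu_{2n})$ also lies in $\ker B=\mathrm{im}\,S$.

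The main obstacle is to upgrade these two constraints (the image lies in $\mathrm{im}\,I\cap \mathrm{im}\,S$) to the actual vanishing $\phi(u_k\otimes \mu_{2n})=0$. To do this, I would compare the spectral sequences abutting to $H^{S^1}_*(W,\p W)$ and $SH^{S^1}_*(W)$, related by the morphism induced by $\phi$. The source spectral sequence degenerates at $E^2$ since the action is trivial, and the map on $E^2$ is $\mathrm{id}\otimes \phi_0$; by SAWC, $u_k\otimes \phi_0(\mu_{2n})=0$ in the target $E^2$ at bidegree $(2k,n)$. Using that $W$ is a $2n$-dimensional oriented manifold with boundary, so $H_i(W,\p W)=0$ for $i>2n$, the source $E^\infty_{p,n+2k-p}$ vanishes for $p<2k$ and the only nonzero contribution appears at $(2k,n)$. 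The filtration on $SH^{S^1}$ being bounded below by $F_{-1}=0$, combined with the inductive vanishing at bidegrees $(2j,n)$ for $j\le k$, should force the graded pieces of $\phi(u_k\otimes \mu_{2n})$ to vanish at every filtration level, hence the element itself vanishes. The delicate point to verify carefully is that lower filtration pieces of $\phi(u_k\otimes \mu_{2n})$ — which do not come from elements at the same bidegree in the source — are nevertheless controlled by the inductive hypothesis, presumably via the chain-level relation $\tilde\phi(u^k\otimes c_{2n})=\sum_j u^{k-j}\otimes \Phi_j(c_{2n})$ and the equation $\Delta\phi_0=\Phi_1 d-d\Phi_1$ expressing the $S^1$-equivariance of $\tilde\phi$ at the first order.
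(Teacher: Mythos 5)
There is a genuine gap, and it is exactly where you flag it. The Gysin comparison gives you that $\phi(u_k\otimes\mu_{2n})$ lies in $\mathrm{im}\,I\cap\mathrm{im}\,S\subset SH^{S^1}_{n+2k}(W)$, but this intersection need not vanish when $SH_*(W)\neq 0$, so the two constraints alone cannot close the induction. Your fallback to the spectral sequence of Theorem~\ref{thm:Gysin-spec}(ii) suffers a parallel defect: since the source $E^2$ page is concentrated in even columns, the morphism of spectral sequences controls only the leading associated graded piece of $\phi(u_k\otimes\mu_{2n})$ in filtration degree $2k$; it says nothing about the lower filtration pieces (filtration degrees $0,\dots,2k-1$), and these are precisely what must be ruled out. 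The chain-level relation you invoke at the end does not by itself supply this control, because the secondary operators $\Phi_j$ ($j\geq 1$) need not vanish on $\mu_{2n}$ and their contributions land exactly in those uncontrolled lower filtration levels. So the argument as proposed does not terminate.

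The missing ingredient is not in the Gysin formalism at all but in the multiplicative structure on symplectic homology. The map $H_{2n}(W,\partial W)\to SH_n(W)$ sends $\mu_{2n}$ to the unit of the ring $SH_*(W)$, so SAWC is equivalent to the vanishing of this unit, which forces $SH_*(W)=0$ outright. Theorem~\ref{thm:vanishing} (via the spectral sequence of Theorem~\ref{thm:Gysin-spec}(ii), or equivalently the Gysin triangle) then yields $SH^{S^1}_*(W)=0$, after which EWC is trivial: every element of $H^{S^1}_*(W,\partial W)$ maps to zero because the target group is zero. This reduces the problem in one stroke rather than degree by degree, and bypasses the filtration difficulties you encountered. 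If you want to salvage an induction-free version of your own attempt, note that the statement you should be proving in the inductive step is not merely $\phi(u_k\otimes\mu_{2n})=0$ but the much stronger $SH^{S^1}_*(W)=0$, and that claim does follow from $SH_*(W)=0$ via the Gysin triangle (any element $x$ with $S^{j}x\neq 0$ for all $j$ contradicts the boundedness of the filtration, and once $S$ is eventually zero on $x$, exactness and $SH_*=0$ force $x=0$); but to know $SH_*(W)=0$ you still need the ring structure.
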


\begin{proof} Symplectic homology has the structure of a ring with unit~\cite{Seidel07,McLean,Ritter}, the unit being the image of $1\in H^0(W)$ under the map $H^0(W)\simeq H_{2n}(W,\partial W)\to SH_n(W)$. Thus $SAWC$ is equivalent to the vanishing of the unit of $SH_*(W)$, hence to the vanishing of $SH_*(W)$ and, in view of Theorem~\ref{thm:vanishing}, to the vanishing of $SH_*^{S^1}(W)$. This clearly implies $EWC$.
\end{proof}

\subsubsection{A substitute for cylindrical contact homology} \label{sec:rigorousCHcyl}

The current definition of cylindrical/linearized contact homology groups as in~\cite{EGH,BOcont,BOcont-err} suffers from the fact that transversality cannot in general be achieved by a generic choice of time-independent cylindrical almost complex structure. In view of Theorem~\ref{thm:isomorphism} and of the fact that transversality can be achieved in an $S^1$-equivariant setting (as in~\cite{BOtransv} or as in~\S\ref{sec:S1-families}, depending on the flavor of $S^1$-equivariant homology group one wishes to consider), we propose the following substitute for cylindrical/linearized contact homology groups.

(1) {\it Fillable contact manifolds}. Let $(M^{2n-1},\xi)$ be a contact manifold which admits a symplectically atoroidal symplectic filling $W$. Assume further that $c_1(W)=0$ and that $\xi$ admits a nondegenerate contact form such that all closed Reeb orbits $\gamma'$ which are contractible in $W$ have positive SFT grading 
\begin{equation} \label{eq:mubar>0} 
\bar\mu(\gamma'):=\mu(\gamma')+n-3>0,
\end{equation}
where $\mu(\gamma')$ denotes the Conley-Zehnder index of the linearized Reeb flow on $\xi$. Our first observation is that the positive $S^1$-equivariant symplectic homology group $SH_*^{+,S^1}(W)$ is a contact invariant of $(M,\xi)$. This amounts to proving that $SH_*^{+,S^1}(W)$ does not depend on the choice of filling inducing the contact form $\alpha$, and also that $SH_*^{+,S^1}(W)$ does not depend on the choice of contact form $\alpha$ satisfying~\eqref{eq:mubar>0}. The independence with respect to the choice of filling follows from the stretch-of-the-neck argument in~\cite[\S5.2]{BOcont} and~\cite[Theorem~1.14]{Cieliebak-Frauenfelder-Oancea}. The independence with respect to the choice of contact form follows from the existence of Viterbo transfer morphisms 
$$
SH_*^{+,S^1}(W)\to SH_*^{+,S^1}(W')
$$
associated to codimension $0$ embeddings $W'\hookrightarrow W$ such that $\overline{W\setminus W'}$ is exact~\cite{Viterbo99,Gutt-thesis,Cieliebak-Oancea}.

If one removes the assumption on the grading of contractible closed Reeb orbits, the result is an invariant of $\xi$ which depends on $W$. The direct sum over all possible such fillings $W$ is then an invariant of $\xi$.

(2) {\it Contact manifolds.} Let $(M^{2n-1},\xi)$ be a contact manifold that satisfies one of the following two conditions: (i) either $c_1(\xi)=0$ and $\xi$ admits a nondegenerate contact form $\alpha$ for which all contractible closed Reeb orbits $\gamma'$ have SFT grading 
$$
\bar\mu(\gamma'):=\mu(\gamma')+n-3>1,
$$ 
(ii) or $\xi$ admits a hypertight contact form $\alpha$, i.e. a contact form with no contractible periodic Reeb orbits. Without loss of generality this contact form can then be assumed to be nondegenerate. 

One can define an invariant of $(M,\xi)$, denoted 
$$
SH_*^{+,S^1}(M,\xi)
$$ 
as follows. We consider the symplectization $(]0,\infty[\times M,d(r\alpha))$ and for each $a>0$ which is not the period of a closed Reeb orbit we consider a Hamiltonian $H^a:\,]0,\infty[\times M\to \R$ of the form $H^a(r,x)=h^a(r)$, with $h^a:\, ]0,\infty[\to \R$ a convex increasing function such that $\frac d {dr}{h^a}(r)\to 0$, $r\to 0$, $\frac d {dr}{h^a}(r)=a$ for $r\gg 0$ and $\frac {d^2} {dr^2}{h^a}(r)>0$ whenever $\frac d {dr}{h^a}(r)<a$. We also consider a cylindrical almost complex structure $J$ and a generic family of perturbations of it, still denoted by $J$, as in~\S\ref{sec:reformulate}. 

We claim that the Floer complex of $H^a$ is an $S^1$-complex as in~\S\ref{sec:reformulate}. (Here, of course, $H^a$ has to be slightly perturbed to a time-dependent Hamiltonian in order to achieve nondegeneracy of its $1$-periodic orbits.) Moreover, for $a<a'$ and $H^a<H^{a'}$, we have continuation maps $SH_*^{+,S^1}(H^a,J)\to SH_*^{+,S^1}(H^{a'},J)$ and we define $SH_*^{+,S^1}(\alpha,J)$ as the direct limit 
$$
SH_*^{+,S^1}(\alpha,J):=\lim_{a\to\infty} SH_*^{+,S^1}(H^a,J). 
$$
The group $SH_*^{+,S^1}(\alpha,J)$ is independent of $J$ and we denote it by $SH_*^{+,S^1}(\alpha)$. It is also independent of $\alpha$, and if we wish to emphasize this we shall denote it by $SH_*^{+,S^1}(M,\xi)$. This last independence statement is proved along the same lines as in the case of filled contact manifolds using transfer morphisms, so that we focus on the proof of the claim. 

To prove the claim, let us note that the relation $\p^{S^1}\circ \p^{S^1}=0$ for the $S^1$-equivariant differential $\p^{S^1}$ defined on $SC_*^{+,S^1}(H^a,J)$ can only be violated by the appearance of $J$-holomorphic buildings with more than one level (in the sense of~\cite{BEHWZ}) on the boundary of $1$-dimensional moduli spaces $\cM^N(\og,\ug;H^a,J,\rho)$ of Floer trajectories in~\S\ref{sec:reformulate}. (The situation is similar to that of cylindrical contact homology.) Under assumption~(ii) such $J$-holomorphic buildings are ruled out because the asymptotes that are common to two consecutive levels must be contractible. Under assumption~(i) such $J$-holomorphic buildings are 
ruled out using transversality for their top component as follows (see also~\cite[Theorem~1.14]{Cieliebak-Frauenfelder-Oancea}). The top component of such a $J$-holomorphic building is a punctured sphere which is asymptotic at two of its punctures to $\og,\ug$, where it solves a \emph{perturbed} Cauchy-Riemann equation. This ensures that transversality holds for the corresponding moduli spaces. On the other hand, if there would be some other (negative) puncture in the neighborhood of which the curve is holomorphic and asymptotic to some closed Reeb orbit, the index formula would show that the virtual dimension of the corresponding moduli space would be $<0$. By transversality, the moduli space has to be empty and this rules out $J$-holomorphic buildings with more than one level. The same argument applies to show that continuation morphisms are well-defined. 

Let $J$ be a cylindrical almost complex structure on the symplectization $(\R_+^*\times M, d(r\alpha))$. The following conditions play for cylindrical contact homology the role of conditions $(A)$ and $(B_c)$ in~\S\ref{sec:linconthom}. We refer to~\cite[\S1.9.2]{EGH} for the original definition of cylindrical contact homology. 

\begin{enumerate} \label{pagenumber:transv-cyl}
\item[($A^{cyl}$)] all contractible closed Reeb orbits of $\alpha$ have SFT grading 
$$\bar\mu(\gamma'):=\mu(\gamma')+n-3>1.$$
\item[($B_c^{cyl}$)]  $J$ is regular for punctured holomorphic cylinders which belong to moduli spaces of virtual dimension $\le 2$, asymptotic at $\pm\infty$ to arbitrary elements of $\cP^c(\alpha)$ and asymptotic at the punctures to elements $\gamma'\in\cP^0(\alpha)$ such that there exists a $J$-holomorphic building in the sense of~\cite[\S7.2]{BEHWZ} with exactly one positive puncture and asymptote $\gamma'$. 
\end{enumerate}

Note that condition $(A^{cyl})$ refers to $\alpha$, whereas $(B_c^{cyl})$ refers to both $\alpha$ and $J$. 

The following is a consequence of Theorem~\ref{thm:isomorphism}. 

\begin{theorem} \label{thm:contact-invariant}
Let $(M^{2n-1},\xi)$ be a contact manifold with $c_1(\xi)=0$ and admitting a nondegenerate contact form $\alpha$ and a cylindrical almost complex structure $J$ on the symplectization $\R_+^*\times M$ which satisfy conditions $(A^{cyl})$ and $(B_c^{cyl})$ for some free homotopy class $c$ of loops in $M$. 

Then cylindrical contact homology $CH_*^{cyl;c}(\alpha,J)$ is defined and we have an isomorphism with $\Q$-coefficients
$$
SH_*^{c,S^1}(\alpha,J)\simeq CH_*^{cyl;c}(\alpha,J),
$$
where $SH_*^{c,S^1}(\alpha,J)$ denotes the summand of $SH_*^{+,S^1}(\alpha,J)$ corresponding to the free homotopy class $c$.
\hfill{$\square$}
\end{theorem}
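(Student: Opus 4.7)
The plan is to follow the strategy of the proof of Theorem~\ref{thm:isomorphism} in this symplectization setting, factoring the desired isomorphism through an intermediate \emph{positive invariant symplectic homology} group $SH_*^{c,inv}(\alpha,J)$ defined via Morse--Bott Hamiltonians $h^a(r)$ on $\R_+^*\times M$, exactly as in the definition of $SH_*^{+,S^1}(M,\xi)$ given just before the theorem. First I would check that $CH_*^{cyl;c}(\alpha,J)$ is actually well-defined: under $(B_c^{cyl})$ the relevant $1$-dimensional moduli spaces of holomorphic cylinders are cut out transversally, and the only way $\p^2$ could fail to vanish is through the boundary contribution of a two-level $J$-holomorphic building whose top component is a cylinder carrying an additional puncture asymptotic to some $\gamma'\in\cP^0(\alpha)$. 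An index computation (as in the argument justifying the well-definedness of $SH_*^{+,S^1}(M,\xi)$ under condition~(i)) shows that non-emptiness of such a moduli space forces $\omu(\gamma')\le 1$, contradicting $(A^{cyl})$.

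The same index-based argument, applied to Floer trajectories in the symplectization and their limit SFT buildings, shows that the Morse--Bott complex $SC_*^{c,inv}(H^a,J)$ generated by good $S^1$-orbits of $H^a$ in the class $c$ and equipped with the differential~\eqref{eq:differentialSH+inv} also squares to zero, and that the continuation maps used to form $SH_*^{c,inv}(\alpha,J):=\lim_{a\to\infty}SH_*^{c,inv}(H^a,J)$ are chain maps. Next I would construct, literally following the definition recalled in~\S\ref{sec:Hamiltformul}, the chain map $I_{H^a}:CC_*^{cyl;c;\le a}(\alpha,J)\to SC_*^{c,inv}(H^a,J)$ obtained by counting elements of the moduli spaces $\cM_I(\og',S_\ug)$ of~\eqref{eq:MI}; the absence of a filling plays no role in this definition since the relevant curves are perturbed cylinders in the symplectization. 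Action considerations show that, with respect to the natural ordering of generators by action, the map $I_{H^a}$ is represented by an upper-triangular matrix with $\pm 1$ on the diagonal, hence is an isomorphism at chain level. Compatibility with enlargement of the action window then yields an isomorphism $I:CH_*^{cyl;c}(\alpha,J)\stackrel\sim\to SH_*^{c,inv}(\alpha,J)$.

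The remaining step is to identify $SH_*^{c,inv}(\alpha,J)$ with $SH_*^{c,S^1}(\alpha,J)$ over $\Q$. For this, the argument of Proposition~\ref{prop:isomorphism} is essentially algebraic: it relies only on the explicit description of the $S^1$-structure on $SC_*^+(H,J)$ furnished by Lemma~\ref{lem:phivanish} (namely $\varphi_N=0$ for $N\ge 2$ and the explicit form of $\Delta=\varphi_1$), on the filtration by $\mu(\gamma)$, and on the homological computation for the model $S^1$-complexes $C^{\kappa}$ and $C^{bad}$ carried out in the proof of Proposition~\ref{prop:isomorphism}. Under assumptions $(A^{cyl})$ and $(B_c^{cyl})$, the transversality required to establish Lemma~\ref{lem:phivanish} and to define the map $\Pi$ via the moduli spaces $\cM^N(\og_p,\ug;H_\beta,J_\beta)$ of~\S\ref{sec:S1-CH} is available in the symplectization. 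Thus the proof of Proposition~\ref{prop:isomorphism} transplants verbatim to produce a quasi-isomorphism $\Pi:SC_*^{c,S^1}(H^a,J)\to SC_*^{c,inv}(H^a,J)$, compatible with continuation maps, hence a $\Q$-isomorphism $\Pi:SH_*^{c,S^1}(\alpha,J)\stackrel\sim\to SH_*^{c,inv}(\alpha,J)$. Composing, $\Pi^{-1}\circ I$ is the desired isomorphism.

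The hard part is really the first paragraph: controlling multi-level $J$-holomorphic buildings in the absence of a filling, both for the definition of $CH_*^{cyl;c}$ itself and for the well-definedness of $SH_*^{c,inv}$ and the map $I$. This is precisely where the strengthened grading hypothesis $\omu(\gamma')>1$ (rather than the mere positivity of $\omu(\gamma')$ used in the filled setting) is unavoidable, since in the symplectization there are no holomorphic planes available to absorb the ``bad'' negative punctures on contractible asymptotes, and one must instead rule out their appearance by a dimension count on the top level alone.
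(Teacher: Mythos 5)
Your proposal is correct and follows essentially the same route the paper has in mind: the paper states the theorem as ``a consequence of Theorem~\ref{thm:isomorphism},'' i.e. it intends precisely the transplantation of the $I_{H^a}$ and $\Pi$ machinery of~\S\ref{sec:CH-Ham}--\S\ref{sec:S1-CH} to the symplectization, with multi-level buildings controlled by the index count on the top (perturbed-holomorphic) level as spelled out just before the theorem for $SH_*^{+,S^1}(M,\xi)$. You have filled in the details the paper leaves implicit, and you correctly identify that the strengthened hypothesis $\bar\mu(\gamma')>1$ (rather than $>0$) is what replaces the absorbing role of holomorphic planes in the filled case.
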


One consequence of Theorem~\ref{thm:contact-invariant} is the following general principle.  

\emph{All applications of cylindrical or linearized contact homology to contact manifolds $(M,\xi)$, with $c_1(\xi)=0$ and admitting a nondegenerate contact form whose contractible closed Reeb orbits have grading $>1$ can be reproduced -- and often can be generalized -- using $S^1$-equivariant symplectic homology.}

There is a caveat to this principle: $S^1$-equivariant symplectic homology may be more difficult to compute than cylindrical contact homology. Indeed, some of the most spectacular applications of cylindrical contact homology, like Ustilovsky's discovery of exotic contact spheres $S^{4m+1}$~\cite{Ustilovsky}, rely heavily on the fact that for some special choice of contact form all closed Reeb orbits have even degree. This implies that the differential is zero and, assuming invariance under deformations, reduces the computation of cylindrical contact homology to an index computation. Within the context of $S^1$-equivariant symplectic homology invariance comes for free, but the computation is not trivial anymore since, after considering a time-dependent perturbation of a Hamiltonian $H^a$ as above, one also creates orbits that have odd degrees. In order to recover, say, Ustilovsky's result, one either has to check that there exists an almost complex structure which satisfies condition $(B_0^{cyl})$ so that Theorem~\ref{thm:contact-invariant} applies, or one has to compute $SH_*^{S^1}$ by some different method. Gutt found such an alternative method in his thesis~\cite{Gutt-thesis}, which involves a local version of $S^1$-equivariant symplectic homology and a local-to-global spectral sequence.  

\subsubsection{Subcritical surgery exact triangle for positive $S^1$-equivariant symplectic homology} \label{sec:subcrit_surgery}

We state here a result from~\cite{Cieliebak-Oancea} (see also~\cite{Oancea-hab}). Consider the boundary $(M^{2n-1},\xi)$ of a Liouville domain $(W^{2n},\omega)$ and let $S^{k-1}\subset M^{2n-1}$ be an isotropic sphere with trivialized symplectic normal bundle. We denote by $(M',\xi')$ the result of Weinstein surgery~\cite{Weinstein} along $S$, so that $(M',\xi')$ is the boundary of a Liouville domain $(W',\omega')$ obtained from $(W,\omega)$ by attaching a symplectic handle of index $k$. 
 
The most interesting -- and difficult -- situation is that when $k=n$ \emph{(critical surgery)}. This was understood by Bourgeois-Ekholm-Eliashberg~\cite{Bourgeois-Ekholm-Eliashberg-1}, who proved under ideal transversality assumptions that the cobordism map 
$CH_*^{lin}(M')\to CH_*^{lin}(M)$ fits into an exact triangle in which the third term is a group of Legendrian contact homology flavor~\cite[\S2.8]{EGH} \cite{Ekholm-Etnyre-Sullivan} built from cyclic words in Reeb chords with endpoints on $S$. 

The case $k<n$ \emph{(subcritical surgery)} for $SH_*^{+,S^1}$ is explained in the next Theorem. 

\begin{theorem}[Subcritical surgery exact sequence~\cite{Cieliebak-Oancea}] \label{thm:surgery}
Let $(M,\xi)$ be the boundary of a Liouville
domain $W$ of dimension $2n$. Let $(M',\xi')$ be
obtained from $M$ by surgery along an isotropic sphere $S^{k-1}\subset M$
\emph{with $k<n$} and denote $W'$ the resulting Liouville filling. There is an exact triangle
\begin{equation}
\xymatrix
@C=20pt
{
SH_*^{+,S^1}(S^{2(n-k)-1}) \ar[rr] & & 
SH_*^{+,S^1}(W')  \ar[dl]^\Phi \\ & SH_*^{+,S^1}(W) \ar[ul]^{[-1]}  
}
\end{equation}
The map $\Phi$ is the canonical transfer map, and the sphere
$S^{2(n-k)-1}$ is endowed with the standard contact structure. 
\hfill{$\square$}
\end{theorem}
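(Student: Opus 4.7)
The plan is to realize the exact triangle at chain level as the long exact sequence associated to a short exact sequence of $S^1$-complexes, and then to invoke the exactness of the $S^1$-equivariant homology functor on short exact sequences of $S^1$-complexes implicit in Proposition~\ref{prop:Gysin-spec}. To begin I would set up the geometry. Write $W' = W \cup H_k$ with $H_k$ a subcritical handle of index $k<n$. The key geometric input, due to Cieliebak, is that one may extend the Liouville form of $W$ across $H_k$ so that there is a hypersurface $\Sigma \subset W'$ isotopic to $\partial W$, transverse to the Liouville vector field, and separating $W'$ into a Liouville subdomain isomorphic to $W$ and a neighborhood of the co-core which retracts symplectically onto the standard ball $B^{2(n-k)}$ (the contractible core disk $D^k$ contributing nothing). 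Choose an admissible Hamiltonian $H'$ on $\hat W'$ which is linear at infinity and whose non-constant $1$-periodic orbits split geometrically into two families $\cF_W$ and $\cF_B$, detecting respectively the Reeb orbits on $\partial W$ and those on $S^{2(n-k)-1} = \partial B^{2(n-k)}$.

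Next I would exhibit the short exact sequence. Let $C^-_\cdot \subset SC_\cdot^+(H')$ be the graded submodule spanned by $\cF_B$ and $C^+_\cdot$ the quotient. The maximum principle applied along the contact-type hypersurface $\Sigma$ shows that no Floer cylinder can begin at an orbit in $\cF_W$ and end at one in $\cF_B$, so that $C^-_\cdot$ is a subcomplex for the ordinary differential. To upgrade this to a statement about the full $S^1$-structure, I would work with the join construction of~\S\ref{sec:S1-join}: the higher maps $\varphi_N$ are defined by counting elements of moduli spaces governed by the parametrized Hamiltonians $H'_{\utau,\uL}(s,\theta,x) = \sum_i t_i(s) H'^{\theta-\tau_i}(x)$. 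Since each summand is a time-shift of $H'$ itself, every section $H'_{\utau,\uL}(s,\cdot)$ admits $\Sigma$ as a barrier, and the maximum principle extends uniformly to all these parametrized equations. Consequently every $\varphi_N$ preserves $C^-_\cdot$, so
\begin{equation*}
0 \to C^-_\cdot \to SC_\cdot^+(H') \to C^+_\cdot \to 0
\end{equation*}
is a short exact sequence of $S^1$-complexes in the sense of~\S\ref{sec:algebraicS1}.

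To finish, one identifies $C^-_\cdot$ with the positive $S^1$-equivariant Floer complex of $B^{2(n-k)}$, computing $SH_\cdot^{+,S^1}(S^{2(n-k)-1})$, and $C^+_\cdot$ with that of $W$, computing $SH_\cdot^{+,S^1}(W)$; the quotient projection is the chain-level realization of the Viterbo transfer $\Phi$. Tensoring the short exact sequence with $\Z[u]$ over $\Z$ preserves exactness (since $\Z[u]$ is free), and passing to homology yields the stated exact triangle. One then takes the direct limit over the slope of $H'$ at infinity, using that all constructions are natural under increasing homotopies. The main obstacle I expect is the uniform maximum-principle step: one must check that $\Sigma$ remains a barrier for the entire family of parametrized Cauchy--Riemann equations, including the $S^1$-invariant almost complex structures $J^{\utau,\uL}_N(s,\theta)$ of Section~\ref{sec:S1-join} and the continuation data connecting different slopes. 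This should follow from the convexity of the space of cylindrical admissible data near $\Sigma$ together with a partition-of-unity argument, but it is the point where the delicate subcritical geometry of the handle attachment interacts most directly with the $S^1$-equivariant analytic framework developed in this paper.
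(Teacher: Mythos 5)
The proof sketched in the paper (deferred to~\cite{Cieliebak-Oancea}) takes an entirely different route from yours. Its two ingredients, stated after the theorem, are: (a) Cieliebak's theorem $SH_*(W)\simeq SH_*(W')$ for subcritical handle attachment, lifted to the $S^1$-equivariant setting, and (b) the functoriality of the symplectic-homology exact triangle of a pair of Liouville domains. Combined with the tautological exact triangles~\eqref{eq:tautologicaltriangle} for $W$ and $W'$ and a nine-lemma argument, the cone of the transfer $\Phi$ is identified with the cone of the topological transfer $H^{S^1}_\cdot(W',\partial W')\to H^{S^1}_\cdot(W,\partial W)$, which reduces the computation to handle-theoretic algebraic topology; the identification with $SH_*^{+,S^1}(S^{2(n-k)-1})$ then also follows from topology since $SH_*^{S^1}(B^{2(n-k)})=0$. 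No Floer-theoretic decomposition of the orbit set on $\hat W'$ is involved. Your approach, by contrast, attempts a direct chain-level short exact sequence of $S^1$-complexes on $\hat W'$, which is morally closer to the Bourgeois--van Koert method for $k=1$ than to the strategy actually followed here.

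There are genuine gaps in the direct approach as you sketch it. First, the geometry is not as stated: a hypersurface $\Sigma\subset W'$ isotopic to $\partial W$ separates $W'$ into $W$ and the \emph{handle} $D^k\times D^{2n-k}$, which is a Liouville \emph{cobordism} with two boundary components, not a Liouville subdomain, and it does not retract onto a domain with boundary $S^{2(n-k)-1}$; the $2(n-k)$-dimensional ``symplectic disc'' in the cocore exists but is not the object you can run a maximum principle against. Second, and more seriously, the splitting of the nonconstant orbits of $H'$ into families $\cF_W$ and $\cF_B$ with $\cF_B$ ``detecting'' the Reeb orbits of the round $S^{2(n-k)-1}$, with matching Conley--Zehnder indices and matching Floer moduli spaces, is precisely the hard analytic content that the paper's route circumvents by invoking Cieliebak's theorem; it is not established by your argument and is not a consequence of any convexity principle. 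Third, there is a direction error in the maximum-principle step: to conclude that $C^-_\cdot = \Z\langle\cF_B\rangle$ is a subcomplex you must rule out trajectories \emph{from} $\cF_B$ (at $s\to-\infty$) \emph{to} $\cF_W$ (at $s\to+\infty$), whereas ruling out trajectories from $\cF_W$ to $\cF_B$, as you state, would with the conventions of \S\ref{sec:S1-Borel} show that $\cF_W$ is the subcomplex --- the wrong half of the sequence. Finally, even granting the short exact sequence, the identifications $H_\cdot(C^-)\simeq SH_\cdot^{+,S^1}(S^{2(n-k)-1})$ and $H_\cdot(C^+)\simeq SH_\cdot^{+,S^1}(W)$ require comparing Floer data on three different completions and are left entirely to the reader; the second is the Viterbo transfer, but the first is the crux and must not be asserted by analogy.
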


The two ingredients in the proof of Theorem~\ref{thm:surgery} are the functoriality of the symplectic homology exact triangle of a pair of Liouville domains~\cite{Cieliebak-Oancea}, and the invariance of symplectic homology under subcritical handle attachment~\cite{Ci02}. 
Note that $SH_*^{S^1}(B^{2(n-k)})=0$, hence $SH_*^{+,S^1}(S^{2(n-k)-1})\simeq H_{*+n-k-1}^{S^1}(B^{2(n-k)},\p B^{2(n-k)})$ and therefore
$$
SH_*^{+,S^1}(S^{2(n-k)-1})\simeq \left\{\begin{array}{rl}Ê
\mathbb{Z}, & *=n-k+1+2m,\ m\ge 0,\\
0, & \mbox{otherwise.}
\end{array}\right. 
$$

The case $k=1$, which is relevant for contact connected sums, was
proved for linearized contact homology under ideal transversality assumptions using a different method by Bourgeois and van Koert~\cite{BvK}, without the
fillability assumption on $(M,\xi)$. We also refer to Espina's thesis~\cite{Espina} for a careful treatment of subcritical surgery and grading issues for linearized contact homology, and observe that her formula~\cite[Corollary~6.3.3]{Espina} for the behavior of the mean Euler characteristic under subcritical surgery can be deduced from Theorem~\ref{thm:surgery} in conjunction with the isomorphism Theorem~\ref{thm:isomorphism}.

\subsection{Computations}

\subsubsection{Subcritical Stein manifolds.}

Mei-Lin Yau~\cite{MLYau} computed cylindrical contact homology in the trivial homotopy class for subcritical Stein fillable contact manifolds with vanishing first Chern class. 

On the other hand, we can compute $SH_*^{+,S^1}(W)$ from the vanishing of $SH_*^{S^1}(W)$, where $W$ is a subcritical Stein domain. That the non-equivariant version $SH_*(W)$ vanishes was proved by Cieliebak in~\cite{Ci02}, and his proof carries over verbatim to the $S^1$-equivariant setup (equivalently, one can use Theorem~\ref{thm:vanishing} above). 
Using the tautological exact triangle for $S^1$-equivariant symplectic homology~\eqref{eq:tautologicaltriangle}, the outcome is
$$
SH_*^{+,S^1}(W)\simeq H_{*+n-1}^{S^1}(W,\partial W) = \bigoplus_{k\ge 0} H_{*+n-1-2k}(W,\partial W).
$$
Up to a different convention for the grading, the original paper~\cite{MLYau} identified cylindrical contact homology of $\p W$ with $\bigoplus_{k\ge 0} H_{n+2k+1-*}(W)$, which is the same as the above by Poincar\'e duality with coefficients in a field. 

Note that assumption~$(B_0^{cyl})$ in~\S\ref{sec:rigorousCHcyl} is not satisfied in the situation considered by Mei-Lin Yau and her computation assumes the invariance of cylindrical contact homology. The fact that we recover the same result using $S^1$-equivariant symplectic homology is an illustration of the fact that the isomorphism between the two theories holds well-beyond the assumptions $(A^{cyl})$ and $(B_c^{cyl})$ in Theorem~\ref{thm:contact-invariant}.

\subsubsection{Cotangent bundles}

Cieliebak and Latschev~\cite[Theorem~B]{Cieliebak-Latschev} computed under ideal transversality assumptions the linearized contact homology of the unit cotangent bundle of a closed oriented spin manifold $L$ of dimension at least $4$ and identified it with $H_*(\cL L/S^1,L)$. Here $\cL L$ is the free loop space of $L$. Strictly speaking the computation is valid for free homotopy classes $c$ that satisfy assumptions~$(A)$ and $(B_c)$ and takes the form 
\begin{eqnarray*}
CH_*^{lin;c}(ST^*L) & \simeq & H_*(\cL^c L/S^1),\quad c\neq 0, \\
CH_*^{lin;0}(ST^*L) & \simeq & H_*(\cL^0 L/S^1,L),
\end{eqnarray*}
where $\cL^c L$ denotes the component of $\cL L$ corresponding to $c$. 

On the other hand, for a closed spin oriented manifold $L$ we have
$$
SH_*^{+,S^1}(ST^*L)\simeq H_*^{S^1}(\cL L,L). 
$$
This is a variation on a theorem of Viterbo stating that $SH_*^{S^1}(T^*L)\simeq H_*^{S^1}(\cL L)$, see~\cite{Viterbo-cotangent}. The spin condition has proven to be necessary by Kragh, Abouzaid and Seidel, see~\cite{AS-corrigendum} and the references therein. Whereas there are several alternative approaches to the non-equivariant version of this isomorphism~\cite{Viterbo-cotangent,AS,AS-corrigendum,SW,Abouzaid-cotangent}, the details for the $S^1$-equivariant version are not written anywhere yet. We expect that both the methods of Abbondandolo and Schwarz~\cite{AS,AS-corrigendum} and of Abouzaid~\cite{Abouzaid-cotangent} for computing $SH_*(T^*L)$ should adapt to the $S^1$-equivariant setting that we consider.

The connection to the computation by Cieliebak and Latschev is provided via, on the one hand the isomorphism of Theorem~\ref{thm:isomorphism}, and on the other hand the isomorphism $H_*^{S^1}(\cL L,L)\simeq H_*(\cL L/S^1, L)$ which holds with rational coefficients (cf. Lemma~\ref{lem:Leray}).
This example supports the point of view that we tried to emphasize throughout the paper and according to which contact homology is a quotient theory for an $S^1$-action with finite isotropy, hence isomorphic over $\Q$ to the $S^1$-equivariant theory.


\bibliographystyle{abbrv}
\bibliography{../LS}

\def\cprime{$'$} \def\cprime{$'$}
\begin{thebibliography}{10}

\bibitem{AS}
A.~Abbondandolo and M.~Schwarz.
\newblock On the {F}loer homology of cotangent bundles.
\newblock {\em Comm. Pure Appl. Math.}, 59(2):254--316, 2006.

\bibitem{AS-corrigendum}
A.~Abbondandolo and M.~Schwarz.
\newblock Corrigendum: {O}n the {F}loer homology of cotangent bundles.
\newblock {\em Comm. Pure Appl. Math.}, 67(4):670--691, 2014.

\bibitem{Abouzaid-cotangent}
M.~Abouzaid.
\newblock Symplectic cohomology and {V}iterbo's theorem.
\newblock arXiv:1312.3354, 2013.

\bibitem{Basu}
S.~Basu.
\newblock An introduction to equivariant homology.
\newblock Available at http://www.math.binghamton.edu/somnath/Notes/EH.pdf.

\bibitem{Bourgeois-thesis-Fields}
F.~Bourgeois.
\newblock A {M}orse-{B}ott approach to contact homology.
\newblock In {\em Symplectic and contact topology: interactions and
  perspectives ({T}oronto, {ON}/{M}ontreal, {QC}, 2001)}, volume~35 of {\em
  Fields Inst. Commun.}, pages 55--77. Amer. Math. Soc., Providence, RI, 2003.

\bibitem{Bourgeois-Ekholm-Eliashberg-1}
F.~Bourgeois, T.~Ekholm, and Y.~Eliashberg.
\newblock Effect of {L}egendrian surgery.
\newblock {\em Geom. Topol.}, 16(1):301--389, 2012.
\newblock With an appendix by Sheel Ganatra and Maksim Maydanskiy.

\bibitem{BEHWZ}
F.~Bourgeois, Y.~Eliashberg, H.~Hofer, K.~Wysocki, and E.~Zehnder.
\newblock Compactness results in symplectic field theory.
\newblock {\em Geom. Topol.}, 7:799--888, 2003.

\bibitem{Bourgeois-Mohnke}
F.~Bourgeois and K.~Mohnke.
\newblock Coherent orientations in symplectic field theory.
\newblock {\em Math. Z.}, 248(1):123--146, 2004.

\bibitem{BOcont}
F.~Bourgeois and A.~Oancea.
\newblock An exact sequence for contact- and symplectic homology.
\newblock {\em Invent. Math.}, 175(3):611--680, 2009.

\bibitem{BOauto}
F.~Bourgeois and A.~Oancea.
\newblock Symplectic homology, autonomous {H}amiltonians, and {M}orse-{B}ott
  moduli spaces.
\newblock {\em Duke Math. J.}, 146(1):71--174, 2009.

\bibitem{BOtransv}
F.~Bourgeois and A.~Oancea.
\newblock Fredholm theory and transversality for the parametrized and for the
  {$S^1$}-invariant symplectic action.
\newblock {\em J. Eur. Math. Soc. (JEMS)}, 12(5):1181--1229, 2010.

\bibitem{BOGysin}
F.~Bourgeois and A.~Oancea.
\newblock The {G}ysin exact sequence for {$S^1$}-equivariant symplectic
  homology.
\newblock {\em J. Topol. Anal.}, 5(4):361--407, 2013.

\bibitem{BOindex}
F.~Bourgeois and A.~Oancea.
\newblock The index of {F}loer moduli problems for parametrized action
  functionals.
\newblock {\em Geom. Dedicata}, 165:5--24, 2013.

\bibitem{BOcont-err}
F.~Bourgeois and A.~Oancea.
\newblock Corrigendum: Ò{A}n exact sequence for contact- and symplectic
  homologyÓ.
\newblock 2013. Available at
  http://www.imj-prg.fr/$\sim$alexandru.oancea/corrigendum.pdf.

\bibitem{BvK}
F.~Bourgeois and O.~{van Koert}.
\newblock Linearized contact homology of connected sums.
\newblock In preparation.

\bibitem{Burghelea-1986}
D.~Burghelea.
\newblock Cyclic homology and the algebraic {$K$}-theory of spaces. {I}.
\newblock In {\em Applications of algebraic {$K$}-theory to algebraic geometry
  and number theory, {P}art {I}, {II} ({B}oulder, {C}olo., 1983)}, volume~55 of
  {\em Contemp. Math.}, pages 89--115. Amer. Math. Soc., Providence, RI, 1986.

\bibitem{CS}
M.~Chas and D.~Sullivan.
\newblock String topology.
\newblock arXiv:math/9911159, 1999.

\bibitem{Ci02}
K.~Cieliebak.
\newblock Handle attaching in symplectic homology and the chord conjecture.
\newblock {\em J. Eur. Math. Soc. (JEMS)}, 4(2):115--142, 2002.

\bibitem{Cieliebak-Eliashberg-book}
K.~Cieliebak and Y.~Eliashberg.
\newblock {\em From {S}tein to {W}einstein and back: symplectic geometry of
  affine complex manifolds}, volume~59 of {\em AMS Colloquium Publications}.
\newblock Amer. Math. Soc., Providence, RI, 2012.

\bibitem{Cieliebak-Frauenfelder}
K.~Cieliebak and U.~Frauenfelder.
\newblock A {F}loer homology for exact contact embeddings.
\newblock {\em Pacific J. Math.}, 239(2):251--316, 2009.

\bibitem{Cieliebak-Frauenfelder-Oancea}
K.~Cieliebak, U.~Frauenfelder, and A.~Oancea.
\newblock Rabinowitz {F}loer homology and symplectic homology.
\newblock {\em Ann. Sci. \'Ec. Norm. Sup\'er. (4)}, 43(6):957--1015, 2010.

\bibitem{Cieliebak-Latschev}
K.~Cieliebak and J.~Latschev.
\newblock The role of string topology in symplectic field theory.
\newblock In {\em New perspectives and challenges in symplectic field theory},
  volume~49 of {\em CRM Proc. Lecture Notes}, pages 113--146. Amer. Math. Soc.,
  Providence, RI, 2009.

\bibitem{Cieliebak-Oancea}
K.~Cieliebak and A.~Oancea.
\newblock Symplectic homology, contact homology, and the {E}ilenberg-{S}teenrod
  axioms.
\newblock In preparation.

\bibitem{DSV}
V.~Dotsenko, S.~Shadrin, and B.~Vallette.
\newblock De {R}ham cohomology and homotopy {F}robenius manifolds.
\newblock {\em To appear in J. Eur. Math. Soc. (JEMS)}, arXiv:1203.5077, 2012.

\bibitem{Ekholm-Etnyre-Sullivan}
T.~Ekholm, J.~Etnyre, and M.~Sullivan.
\newblock The contact homology of {L}egendrian submanifolds in {${\Bbb
  R}^{2n+1}$}.
\newblock {\em J. Differential Geom.}, 71(2):177--305, 2005.

\bibitem{EGH}
Y.~Eliashberg, A.~Givental, and H.~Hofer.
\newblock Introduction to symplectic field theory.
\newblock {\em Geom. Funct. Anal.}, (Special Volume, Part II):560--673, 2000.
\newblock GAFA 2000 (Tel Aviv, 1999).

\bibitem{Eliashberg-Gromov}
Y.~Eliashberg and M.~Gromov.
\newblock Convex symplectic manifolds.
\newblock In {\em Several complex variables and complex geometry, {P}art 2
  ({S}anta {C}ruz, {CA}, 1989)}, volume~52 of {\em Proc. Sympos. Pure Math.},
  pages 135--162. Amer. Math. Soc., Providence, RI, 1991.

\bibitem{Espina}
J.~R. Espina.
\newblock {\em The mean {E}uler characteristic of contact structures}.
\newblock ProQuest LLC, Ann Arbor, MI, 2011.
\newblock Thesis (Ph.D.)--University of California, Santa Cruz.

\bibitem{Floer-Lagrangian}
A.~Floer.
\newblock Morse theory for {L}agrangian intersections.
\newblock {\em J. Differential Geom.}, 28(3):513--547, 1988.

\bibitem{Floer-SympFixedPoints}
A.~Floer.
\newblock Symplectic fixed points and holomorphic spheres.
\newblock {\em Comm. Math. Phys.}, 120(4):575--611, 1989.

\bibitem{FH-coherent}
A.~Floer and H.~Hofer.
\newblock Coherent orientations for periodic orbit problems in symplectic
  geometry.
\newblock {\em Math. Z.}, 212(1):13--38, 1993.

\bibitem{FHS}
A.~Floer, H.~Hofer, and D.~Salamon.
\newblock Transversality in elliptic {M}orse theory for the symplectic action.
\newblock {\em Duke Math. J.}, 80(1):251--292, 1995.

\bibitem{Frauenfelder-Schlenk-vanKoert}
U.~Frauenfelder, F.~Schlenk, and O.~van Koert.
\newblock Displaceability and the mean {E}uler characteristic.
\newblock {\em Kyoto J. Math.}, 52(4):797--815, 2012.

\bibitem{Gelfand-Manin}
S.~I. Gelfand and Y.~I. Manin.
\newblock {\em Methods of homological algebra}.
\newblock Springer Monographs in Mathematics. Springer-Verlag, Berlin, second
  edition, 2003.

\bibitem{Givental}
A.~B. Givental{\cprime}.
\newblock Homological geometry. {I}. {P}rojective hypersurfaces.
\newblock {\em Selecta Math. (N.S.)}, 1(2):325--345, 1995.

\bibitem{Godement}
R.~Godement.
\newblock {\em Topologie alg\'ebrique et th\'eorie des faisceaux}.
\newblock Hermann, Paris, 1973.
\newblock Troisi{\`e}me {\'e}dition revue et corrig{\'e}e, Publications de
  l'Institut de Math{\'e}matique de l'Universit{\'e} de Strasbourg, XIII,
  Actualit{\'e}s Scientifiques et Industrielles, No. 1252.

\bibitem{Gutt-thesis}
J.~Gutt.
\newblock {\em On the minimal number of periodic {R}eeb orbits on contact
  manifolds}.
\newblock PhD thesis, Universit\'e {L}ibre de {B}ruxelles and {U}niversit\'e de
  {S}trasbourg, 2014.

\bibitem{Hofer-Polyfolds-survey}
H.~Hofer.
\newblock A general {F}redholm theory and applications.
\newblock In {\em Current developments in mathematics, 2004}, pages 1--71. Int.
  Press, Somerville, MA, 2006.

\bibitem{HWZ-polyfolds-I}
H.~Hofer, K.~Wysocki, and E.~Zehnder.
\newblock A general {F}redholm theory. {I}. {A} splicing-based differential
  geometry.
\newblock {\em J. Eur. Math. Soc. (JEMS)}, 9(4):841--876, 2007.

\bibitem{HWZ-polyfolds-II}
H.~Hofer, K.~Wysocki, and E.~Zehnder.
\newblock A general {F}redholm theory. {II}. {I}mplicit function theorems.
\newblock {\em Geom. Funct. Anal.}, 19(1):206--293, 2009.

\bibitem{HWZ-polyfolds-III}
H.~Hofer, K.~Wysocki, and E.~Zehnder.
\newblock A general {F}redholm theory. {III}. {F}redholm functors and
  polyfolds.
\newblock {\em Geom. Topol.}, 13(4):2279--2387, 2009.

\bibitem{Hutchings03}
M.~Hutchings.
\newblock Floer homology of families {I}.
\newblock {\em Alg. Geom. Topol.}, 8:435--492, 2008.

\bibitem{Kassel-1987}
C.~Kassel.
\newblock Cyclic homology, comodules, and mixed complexes.
\newblock {\em J. Algebra}, 107(1):195--216, 1987.

\bibitem{Lapin}
S.~V. Lapin.
\newblock Differential perturbations and {$D_\infty$}-differential modules.
\newblock {\em Mat. Sb.}, 192(11):55--76, 2001.

\bibitem{Loday}
J.-L. Loday.
\newblock {\em Cyclic homology}, volume 301 of {\em Grundlehren der
  Mathematischen Wissenschaften [Fundamental Principles of Mathematical
  Sciences]}.
\newblock Springer-Verlag, Berlin, second edition, 1998.
\newblock Appendix E by Mar{\'{\i}}a O. Ronco, Chapter 13 by the author in
  collaboration with Teimuraz Pirashvili.

\bibitem{Loday-Vallette-book}
J.-L. Loday and B.~Vallette.
\newblock {\em Algebraic operads}, volume 346 of {\em Grundlehren der
  Mathematischen Wissenschaften [Fundamental Principles of Mathematical
  Sciences]}.
\newblock Springer, Heidelberg, 2012.

\bibitem{McLean}
M.~McLean.
\newblock Lefschetz fibrations and symplectic homology.
\newblock {\em Geom. Topol.}, 13(4):1877--1944, 2009.

\bibitem{Meyer}
J.-P. Meyer.
\newblock Acyclic models for multicomplexes.
\newblock {\em Duke Math. J.}, 45(1):67--85, 1978.

\bibitem{Milnor1956}
J.~Milnor.
\newblock Construction of universal bundles. {II}.
\newblock {\em Ann. of Math. (2)}, 63:430--436, 1956.

\bibitem{LS}
A.~Oancea.
\newblock Fibered symplectic cohomology and the {L}eray-{S}erre spectral
  sequence.
\newblock {\em J. Symplectic Geom.}, 6(3):267--351, 2008.

\bibitem{Oancea-hab}
A.~Oancea.
\newblock {\em Towards symplectic and contact algebraic topology}.
\newblock Institut de Recherche Math\'ematique Avanc\'ee, Universit\'e de
  Strasbourg, Strasbourg, 2009.
\newblock Habilitation, Universit{\'e} de Strasbourg, Strasbourg, 2009.

\bibitem{Ritter}
A.~F. Ritter.
\newblock Topological quantum field theory structure on symplectic cohomology.
\newblock {\em J. Topol.}, 6(2):391--489, 2013.

\bibitem{SW}
D.~Salamon and J.~Weber.
\newblock Floer homology and the heat flow.
\newblock {\em Geom. Funct. Anal.}, 16(5):1050--1138, 2006.

\bibitem{SZ92}
D.~Salamon and E.~Zehnder.
\newblock Morse theory for periodic solutions of {H}amiltonian systems and the
  {M}aslov index.
\newblock {\em Comm. Pure Appl. Math.}, 45(10):1303--1360, 1992.

\bibitem{Seidel07}
P.~Seidel.
\newblock A biased view of symplectic cohomology.
\newblock In {\em Current developments in mathematics, 2006}, pages 211--253.
  Int. Press, Somerville, MA, 2008.

\bibitem{Seidel-Hochschild}
P.~Seidel.
\newblock Symplectic homology as {H}ochschild homology.
\newblock In {\em Algebraic Geometry: Seattle 2005, Part~1}, pages 415--434.
  Amer. Math. Soc., 2008.

\bibitem{Seidel-Smith-localization}
P.~Seidel and I.~Smith.
\newblock Localization for involutions in {F}loer cohomology.
\newblock {\em Geom. Funct. Anal.}, 20(6):1464--1501, 2010.

\bibitem{Ustilovsky}
I.~Ustilovsky.
\newblock Infinitely many contact structures on {$S^{4m+1}$}.
\newblock {\em Internat. Math. Res. Notices}, (14):781--791, 1999.

\bibitem{Viterbo-cotangent}
C.~Viterbo.
\newblock Functors and computations in {F}loer homology with
  applications.~{II}.
\newblock Pr\'epublication Orsay 98-15, 1998. Available online at
  http://www.math.ens.fr/$\sim$viterbo/FCFH.II.2003.pdf.

\bibitem{Viterbo99}
C.~Viterbo.
\newblock Functors and computations in {F}loer homology with applications. {I}.
\newblock {\em Geom. Funct. Anal.}, 9(5):985--1033, 1999.

\bibitem{Weinstein}
A.~Weinstein.
\newblock Contact surgery and symplectic handlebodies.
\newblock {\em Hokkaido Math. J.}, 20:241--251, 1991.

\bibitem{MLYau}
M.-L. Yau.
\newblock Cylindrical contact homology of subcritical {S}tein-fillable contact
  manifolds.
\newblock {\em Geom. Topol.}, 8:1243--1280, 2004.

\end{thebibliography}

\enddocument